\documentclass[11pt,reqno]{amsart}
\usepackage{amssymb,mathrsfs,mathtools,graphicx,enumerate,color,colortbl}
\usepackage{hyperref}
\usepackage{comment}
\title[Contraction of viscous-dispersive shock for NSK]{Contraction of viscous-dispersive shocks: \\Zero viscosity-capillarity limits}
\author[Eun]{Namhyun Eun}
\address[Namhyun Eun]
{ Department of Mathematical Sciences, \newline
Korea Advanced Institute of
Science and Technology \\
Daejeon 34141, Korea}
\email{namhyuneun@kaist.ac.kr}

\author[Kang]{Moon-Jin Kang}
\address[Moon-Jin Kang]
{ Department of Mathematical Sciences, \newline
Korea Advanced Institute of
Science and Technology \\
Daejeon 34141, Korea}
\email{moonjinkang@kaist.ac.kr}

\author[Kim]{Jeongho Kim}
\address[Jeongho Kim]
{ Department of Applied Mathematics, \newline
Kyung Hee University \\
1732 Deogyeong-daero, Giheung-gu, Yongin-si, Gyeonggi-do 17104, Korea}
\email{jeonghokim@khu.ac.kr}
\newtheorem{theorem}{Theorem}[section]
\newtheorem{lemma}{Lemma}[section]

\newtheorem{remark}{Remark}[section]

\numberwithin{figure}{section}

\newcommand{\beq}{\begin{equation}}
\newcommand{\eeq}{\end{equation}}
\newcommand{\bsp}{\begin{split}}
\newcommand{\esp}{\end{split}}

\newcommand{\wc}{\rightharpoonup}

\newtheorem{theo}{Theorem}[section]
\newtheorem{prop}[theo]{Proposition}

\newcommand{\Bcal}{\mathcal{B}}

\newcommand{\Dcal}{\mathcal{D}}
\newcommand{\Ecal}{\mathcal{E}}
\newcommand{\Fcal}{\mathcal{F}}
\newcommand{\Gcal}{\mathcal{G}}
\newcommand{\Hcal}{\mathcal{H}}
\newcommand{\Ical}{\mathcal{I}}
\newcommand{\Jcal}{\mathcal{J}}

\newcommand{\Mcal}{\mathcal{M}}

\newcommand{\Pcal}{\mathcal{P}}

\newcommand{\Rcal}{\mathcal{R}}

\newcommand{\Xcal}{\mathcal{X}}

\newcommand{\VBar}{\overline{V}}

\newcommand{\Ubar}{\bar{U}}

\renewcommand{\hbar}{\bar{h}}

\newcommand{\ubar}{\bar{u}}
\newcommand{\vbar}{\bar{v}}

\newcommand{\Util}{\tilde{U}}

\newcommand{\htil}{\tilde{h}}

\newcommand{\util}{\tilde{u}}
\newcommand{\vtil}{\tilde{v}}

\newcommand{\vt}{{\tilde{v}}}
\newcommand{\ut}{{\tilde{u}}}

\renewcommand{\a}{\alpha}
\renewcommand{\b}{\beta}
\newcommand{\g}{\gamma}
\renewcommand{\d}{\delta}

\newcommand{\z}{\zeta}
\newcommand{\et}{\eta}

\renewcommand{\k}{\kappa}
\renewcommand{\l}{\lambda}

\newcommand{\m}{\mu}
\newcommand{\n}{\nu}
\newcommand{\x}{\xi}

\renewcommand{\r}{\rho}
\newcommand{\s}{\sigma}
\renewcommand{\t}{\tau}

\renewcommand{\O}{\Omega}

\newcommand{\e}{\varepsilon}
\newcommand{\ee}{\epsilon}

\newcommand{\RR}{{\mathbb R}}

\newcommand{\abs}[1]{\left|#1\right|}

\newcommand{\norm}[1]{\left\|#1\right\|}

\newcommand{\one}[1]{\mathbf{1}_{\{#1\}}}
\newcommand{\oone}[1]{\mathbf{1}_{#1}}

\newcommand{\bpf}[1]{\noindent\textbf{Proof of \eqref{#1}:}}

\newcommand{\step}[1]{\vskip0.2cm \noindent{\it Step #1:} }

\newcommand{\vtn}{{\tilde{v}^\nu}}
\newcommand{\utn}{{\tilde{u}^\nu}}

\newcommand{\vn}{{v^\nu}}
\newcommand{\un}{{u^\nu}}

\definecolor{black}{rgb}{0.0, 0.0, 0.0}
\definecolor{red}{rgb}{1.0, 0.5, 0.5}

\newcommand{\rd}{\partial}

\newcommand{\That}{\hat{T}}

\begin{document}
\bibliographystyle{plain}

\date{\today}

\subjclass{35B35,76N10,76N15} \keywords{Navier--Stokes--Korteweg system, Viscous-dispersive shock, Contraction, Existence, Stability, Zero viscosity-capillarity limit, Barotropic Euler system, Riemann shock}

\thanks{\textbf{Acknowledgment.} N. Eun and M.-J. Kang were supported by Samsung Science and Technology Foundation under Project Number SSTF-BA2102-01. J. Kim was supported by Samsung Science and Technology Foundation under Project Number SSTF-BA2401-01.}

\begin{abstract}
We prove the contraction property of any large solution perturbed from a viscous-dispersive shock wave of the Navier--Stokes--Korteweg (NSK) system. 
The contraction holds up to a dynamical shift, since the contraction is measured by the relative entropy that is locally $L^2$.
We use the contraction property to show the global existence of large solution perturbed from a viscous-dispersive shock wave.
To prove the contraction property, we first employ the effective velocity to transform the NSK system into the system of two degenerate parabolic equations, then apply the method of $a$-contraction with shifts. 
The contraction property does not depend on the strengths of viscosity and capillarity. Based on this uniformity, we show the existence of zero viscosity-capillarity limits of solutions to the NSK system, on which Riemann shocks are unique and stable up to shifts.
\end{abstract}
\maketitle \centerline{\date}

\tableofcontents

\section{Introduction}
\setcounter{equation}{0}
We consider an one-dimensional compressible fluid of Korteweg type, whose dynamics is governed by the Navier--Stokes--Korteweg (NSK) system which, in the Lagrangian mass coordinates, reads as follows:
\begin{equation} \label{main0}
\begin{cases}
v_t - u_x = 0, \\
u_t + p(v)_x = \left(\mu(v)\frac{u_x}{v}\right)_x + \left(\kappa(v)\left(-\frac{v_{xx}}{(v)^5}+\frac{5(v_x)^2}{2(v)^6}\right)-\frac{\kappa'(v)(v_x)^2}{2(v)^5}\right)_x.
\end{cases}
\end{equation}
Here, $v$ and $u$ denote specific volume and fluid velocity and $p(v)$, $\m(v)$, and $\kappa(v)$ respectively represent the pressure, the viscosity coefficient and the capillarity coefficient of the fluid.
We focus on a case of polytropic gas, and viscosity coefficients that may degenerate near vacuum, that is
\begin{equation} \label{pressure}
p(v)=v^{-\g}, \qquad \m(v)= bv^{-\a}, 
\end{equation}
with the exponent \(\g\ge1\). The constants \(\a\ge0\) and $b>0$ depend only on the type of gas.
We assume that the capillarity coefficient $\kappa(v)$ has a close relationship with the viscosity $\mu(v)$, and it is given by
\begin{equation}\label{kappa}
	\kappa(v)=c\mu(v)^2 v^3,
\end{equation}
for some constant $c>0$.
This specific relation between the viscosity and capillarity is naturally assumed in various literature and we refer to for instance, \cite{BGLV-ARMA19,CH-SIMA22,CH-SIMA13,GL-CPAM15}.

\vspace{2mm}
The system \eqref{main0} appears in many fields of application, mainly used for describing hydrodynamic phenomena of the quantum mechanical system (for $\kappa(v) = v$), or modeling the diffuse interface for the two-phase fluid.
In particular, the Euler--Korteweg system can be derived from the nonlinear Schr\"odinger equation via the Madelung transformation (see \cite{CDS-12, Madelung1927}).
The following form of nonlinear Schr\"odinger equation 
\[
i \ee \psi^{\ee}_t + \frac{\ee^2}{2} \psi^{\ee}_{xx} = f(\abs{\psi^{\ee}}^2) \psi^{\ee}
\]
and the Madelung transform 
\[
\psi^\ee = \sqrt{\r(t,x)} e^{i \phi(t,x)/{\ee}}
\]
yield the isentropic Euler system (in the Eulerian coordinates) in terms of \((\rho,\phi_x)\), and the Korteweg tensor for \(\k(v)=v(=1/\rho)\) (which implies \(\m(v)=1/v\)) appears on the right-hand side.
Thus, the NSK system, equipped with \(\k(v)=v\), can be viewed as a dissipative extension of quantum fluid models.
Such models arise in the description of various physical contexts, including quantum semiconductors, Bose gases, and superfluids (see \cite{ref-quantumsemiconductor, ref-bosegases, ref-superfluid}).

\vspace{2mm}
The original idea of fluid with capillarity dates back to around early of 20th century, which was introduced by van der Waals \cite{vanderWaals-ZPC94} and Korteweg \cite{Korteweg-01}.
Later, the NSK system \eqref{main0} was rigorously derived by Dunn and Serrin \cite{DS-ARMA85} via thermomechanical theories of interstitial working.
After its introduction, the NSK system has been extensively studied.
To name a few, Hattori and Li \cite{HL-SIMA94,HL-JMAA96} initiates a study on the existence of strong solutions to the NSK system, and many other results on a weak solution, or solutions with critical regularity have been investigated.
We refer to \cite{BDL-CPDE03,CCDZ-JDE15,DD-Poincare01,Haspot-Pascal09,Haspot-JMFM11,Kotschote-Poincare08} and references therein for the existence results for the NSK system. \\

Our primary goal is to prove the contraction property (up to shifts) of large solutions of \eqref{main0} around a  viscous-dispersive shock profile as a solution to
\begin{equation} \label{shock_0}
\begin{cases}
-\s \vtil' - \util' = 0, \\
-\s \util' + p(\vtil)' = \Big(\mu(\vtil)\frac{\util'}{\vtil}\Big)' 
+ \Big(\kappa(\vtil)\Big(-\frac{\vtil''}{\vtil^5}+\frac{5(\vtil')^2}{2\vtil^6}\Big)-\frac{\kappa'(\vtil)(\vtil')^2}{2\vtil^5}\Big)', \\
\lim_{\x\to\pm\infty} (\vtil^\n,\util^\n)(\x)=(v_\pm,u_\pm),
\end{cases}
\end{equation}
where the constant states $(v_\pm,u_\pm)$ satisfy the Rankine-Hugoniot condition and Lax entropy condition:
\begin{align}
\begin{aligned} \label{end-con}
&\exists~\s \quad \text{s.t.}~\left\{
\begin{aligned}
&-\s(v_+-v_-) -(u_+-u_-) =0, \\
&-\s(u_+-u_-) +p(v_+)-p(v_-) =0,
\end{aligned} \right. \\
&\text{and either \(v_->v_+\) and \(u_->u_+\) or \(v_-<v_+\) and \(u_->u_+\) holds.}
\end{aligned}
\end{align}
If \(v_->v_+\), \((\vtil,\util)\) is a \(1\)-shock with velocity  \(\s=-\sqrt{-\frac{p(v_+)-p(v_-)}{v_+-v_-}}<0\), and if \(v_-<v_+\), \((\vtil,\util)\) is a \(2\)-shock with velocity \(\s=\sqrt{-\frac{p(v_+)-p(v_-)}{v_+-v_-}}>0\).

The contraction holds up to a dynamical shift, since the contraction is measured by the relative entropy that is locally $L^2$.
For the contraction with regular dynamical shifts, we need to prove the global existence of strong solutions evolving from regular initial perturbation without smallness assumption. 
The contraction property does not depend on the strengths of viscosity and capillarity. So, this uniformity may guarantee the existence of zero viscosity-capillarity limits of solutions to the scaled NSK system:
\begin{equation} \label{inveq}
\begin{cases}
v_t^\nu - u_x^\nu = 0, \\
u_t^\nu + p(v^\nu)_x = \nu\left(\mu(v^\nu)\frac{u^\nu_x}{v}\right)_x + \nu^2\left(\kappa(v^\nu)\left(-\frac{v^\nu_{xx}}{(v^\nu)^5}+\frac{5(v^\nu_x)^2}{2(v^\nu)^6}\right)-\frac{\kappa'(v^\nu)(v^\nu_x)^2}{2(v^\nu)^5}\right)_x,
\end{cases}
\end{equation}
where $\nu>0$ is a natural vanishing parameter.
Regarding the scale of vanishing parameters, i.e., \(\n\) and \(\n^2\), we refer to \cite{CH-SIMA13,GL-CPAM15} for vanishing viscosity-capillarity limits and \cite{BS-PRSE85,Pego-TAMS85} for monotone profiles of viscous-dispersive shock waves.
Indeed, we use the contraction property to obtain the existence of zero viscosity-capillarity limits of solutions to \eqref{inveq}, on which the Riemann shock \eqref{shock-0}  is unique and stable (up to shifts). The (Riemann) shock of the form: 
\begin{equation} \label{shock-0}
(\vbar,\ubar)(x-\s t)= \left\{
\begin{aligned}
(v_-,u_-) \quad \text{if } x-\s t<0, \\
(v_+,u_+) \quad \text{if } x-\s t>0,
\end{aligned} \right.
\end{equation}
is a self-similar entropy solution to the polytropic Euler system with $p(v)=v^{-\g}, \g\ge1$:
\begin{equation} \label{Euler}
\begin{cases}
v_t - u_x = 0, \\
u_t + p(v)_x = 0.
\end{cases}
\end{equation}

There has been a few literature considering {\it zero viscosity-capillarity limit} of the NSK system \eqref{inveq} towards the Euler system \eqref{Euler}, when both the viscosity and capillary parameters vanish simultaneously. Charve and Haspot \cite{CH-SIMA13} showed that the solution to the NSK system converges to an entropy solution of the Euler system, with a special choice of viscosity and capillarity coefficients. This was generalized to broader class of viscosity and capillarity by Germain and LeFloch \cite{GL-CPAM15}. We also refer to \cite{CH-SIMA22} for the vanishing capillary limit of the NSK system. However, to the best of our knowledge, there has been no result on the stability of entropy solutions obtained by   zero viscosity-capillarity limit. Therefore, we establish the first result on the uniqueness and stability of shock \eqref{shock-0} in the class of vanishing viscosity-capillarity limit from the NSK system \eqref{inveq} as in Theorem \ref{thm_inviscid}. This result is achieved by Theorem \ref{thm_main} on the contraction property. This property may provide a key cornerstone even for stability of more general entropy solutions. In the study \cite{CKV-24} on inviscid limits from Navier--Stokes system (i.e. $\kappa\equiv0$) to Euler system, the authors proved the stability of  entropy solutions to the isentropic Euler evolving from  small
BV initial data, in the class of  inviscid limits of solutions to the associated Navier--Stokes system. There, the results \cite{KV-JEMS21,KV-Inven21,KV-JDE22} on contraction property of shocks were importantly used. We leave the study on the zero viscosity-capillarity limit of NSK to stable BV solutions for future work.

\subsection{Main results}
We now present our main results in the following order: the contraction property; the existence and uniqueness of strong large perturbations; and  the uniqueness and stability of Riemann shock to the polytropic Euler system in the class of zero viscosity-capillarity limits from the NSK system.

We first introduce the following relative functional \(E(\cdot|\cdot)\) to measure the contraction property: for any positive functions \(v_1,v_2,\) and any functions \(u_1,u_2,\)
\[
E((v_1,u_1)|(v_2,u_2)) \coloneqq 
\frac{1}{2}\Big(u_1-\frac{\t_1 \g (v_1)_\x}{(v_1)^{\a+1}}-u_2+\frac{\t_1 \g (v_2)_\x}{(v_2)^{\a+1}}\Big)^2
+Q(v_1|v_2).
\]
Here, \(\t_1:=\frac{1+\sqrt{1-4c}}{2}\) for the constant $0\le c\le \frac{1}{4}$ from the relation \eqref{kappa}.
Let us denote \(h \coloneqq u- \frac{\t_1\g v_\x}{v^{\a+1}}\) (resp. \(\htil \coloneqq \util - \frac{\t_1\g \vtil'}{\vtil^{\a+1}}\)) as an effective velocity which is introduced in a similar form by \cite{CH-SIMA13,GL-CPAM15}, motivated by 
the Bresch--Desjardins entropy (see for instance, \cite{BD-CMP03,BD-JMPA06,BDL-CPDE03}).
Notice that the above functional is  the relative entropy associated to the entropy of \eqref{Euler}:
\begin{equation} \label{et-def}
\et((v_1,h_1)|(v_2,h_2)) \coloneqq \frac{1}{2}(h_1-h_2)^2 + Q(v_1|v_2),
\end{equation}
where
\[
Q(v_1|v_2)
\coloneqq Q(v_1)-Q(v_2)-Q'(v_2)(v_1-v_2),
\]
associated with the strictly convex function 
\begin{equation*}
\begin{cases}
Q(v) \coloneqq \frac{v^{-\g+1}}{\g-1} \qquad \text{ for } \g>1,\\
Q(v) \coloneqq -\log v \qquad \text{ for } \g=1,
\end{cases} \quad v>0.
\end{equation*}
For the contraction property with dynamical shifts, we consider the following function class:
\begin{multline*}
\Xcal_T \coloneqq \{(v,u) \mid v-\underline{v} \in C([0,T];H^2(\RR))\cap L^2(0,T;H^3(\RR)),
0<v^{-1}\in L^\infty((0,T)\times\RR)\\
u-\underline{u}\in C([0,T];H^1(\RR))\cap L^2(0,T;H^2(\RR))\},
\end{multline*}
where \(\underline{v}\) and \(\underline{u}\) are smooth monotone functions such that
\begin{equation} \label{sm-end}
\underline{v}(x)=v_{\pm} \quad \text{ and } \quad
\underline{u}(x)=u_{\pm} \quad \text{ for } \pm x \ge 1.
\end{equation}
The contraction result applies to any NSK solutions belonging to \(\Xcal_T\), and we note that the existence of solutions within this function space is obtained in our second theorem.

\begin{theorem} \label{thm_main}
Let \(\g,\a\) be any constants satisfying 
\begin{equation} \label{g-a-condition}
1 \le \g \le \frac{5}{4}, \qquad 0 \le \a \le \g \le 1+\frac{\a}{3}.
\end{equation}
Assume $c\in[0,\frac{9}{100}]$ for the constant in \eqref{kappa}. Then, for a given constant state \((v_-,u_-) \in \RR^+\times\RR\), there exist constants \(\e_0,\d_0>0\) such that the following holds.\\
Fix any \(\e\in(0,\e_0),\) \(\l\in(\d_0^{-1}\e,\d_0),\) and \((v_+,u_+)\in\RR^+\times\RR\) satisfying \eqref{end-con} with \(|p(v_+)-p(v_-)|=\e\). Let \(\Util\coloneqq(\vtil,\util)\) be the viscous-dispersive shock defined in \eqref{shock_0} and let \(U\coloneqq(v,u)\) be a solution to \eqref{main0} in \(\Xcal_T\) for some $T>0$ subject to an initial datum \(U_0\coloneqq(v_0,u_0)\) which satisfies \(\int_\RR E(U_0|\Util)dx < \infty\). Then, there exist a smooth monotone function \(a\colon\RR\to\RR^+\) with \(\lim_{x\to\pm\infty}a(x)=1+a_\pm\) for some constants \(a_-\) and \(a_+\) with \(|a_+-a_-|=\l\), a shift function \(X\in W^{1,1}(0,T)\) with \(X(0)=0\) and a constant \(C>0\) (independent of \(\e,\l,\d_0\) and \(T\)) such that \begin{equation} \label{cont_main-u}
	\begin{aligned}
		&\int_\RR a(\x)E(U(t,\x+X(t))|\Util(\x))d\x \\
		&\qquad
		+ \d_0\frac{\e}{\l}\int_0^T \int_\RR \abs{\s_\e a'(\x)} Q(v(s,\x+X(s))|\vtil(\x))d\x ds \\
		&\qquad
		+ \d_0 \t_1 \int_0^T \int_\RR a(\x) v^{\g-\a}(s,\x+X(s))\abs{\rd_\x \left(p(v(s,\x+X(s)))-p(\vtil(\x))\right)}^2 d\x ds \\
		&\qquad
		+ \d_0 \t_2\g \int_0^T \int_\RR a(\x)  v^{-\a-1}(s,\x+X(s)) |\rd_\x (h(s,\x+X(s))-\htil(\x))|^2 d\x ds\\
		&\le \int_\RR a(\x)E(U_0(\x)|\Util(\x))d\x,
	\end{aligned}
\end{equation}
and
\begin{equation} \label{shift-control-u}
	\begin{aligned}
		&|\dot{X}(t)| \le \frac{C}{\e^2}(f(t)+1), \quad \text{for a.e. } t\in[0,T], \\
		&\text{for some positive function } f \text{ satisfying } \norm{f}_{L^1(0,T)} \le \frac{\l}{\d_0\e}\int_\RR E(U_0(x)|\Util(x))dx.
	\end{aligned}
\end{equation}

\end{theorem}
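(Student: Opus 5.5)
We follow the $a$-contraction with shifts method of Kang--Vasseur, adapted to the NSK system through the effective velocity $h=u-\frac{\t_1\g v_x}{v^{\a+1}}$.

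\textbf{Step 1: reformulation.} Because $\kappa=c\mu^2v^3$ and $\t_1$ is a root of $\t^2-\t+c=0$ (with $\t_2:=1-\t_1$), the substitution of $h$ turns \eqref{main0} into a system of two degenerate parabolic equations. We expect it to take the form
\[
v_t-h_x=\t_1\g\Big(\frac{v_x}{v^{\a+1}}\Big)_x,\qquad h_t+p(v)_x=\t_2\g\Big(\frac{h_x}{v^{\a+1}}\Big)_x,
\]
up to the normalization of $b$, and with the $v$-diffusion written in terms of $p(v)_x$. The profile $(\vtil,\htil)$ satisfies the corresponding traveling-wave ODE. For weak shocks with $|p(v_+)-p(v_-)|=\e$, we record the standard profile estimates: $\vtil'$ is of order $\e^2 e^{-c\e|\x|}$, the profile is monotone, and $\htil-\util$ is of order $\e^2$.

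\textbf{Step 2: weight and shift.} We take the weight $a(\x)=1+\frac{\l}{\e}(p(v_-)-p(\vtil(\x)))$, so $|a_+-a_-|=\l$ and $a'$ is proportional to $\frac{\l}{\e}|\vtil'|$. We compute $\frac{d}{dt}\int a\,\et(U^X|\Util)\,d\x$ in the moving frame $\x\mapsto \x+X(t)$ and obtain the identity
\[
\frac{d}{dt}\int a\,\et = \dot X\,Y(U) + \Jcal^{bad}(U) - \Gcal(U),
\]
where $\Gcal$ collects the good terms: the weight term $\s a'Q$ and the two diffusion terms $\t_1\int a\,v^{\g-\a}|\rd_\x(p(v)-p(\vtil))|^2$ and $\t_2\g\int a\,v^{-\a-1}|\rd_\x(h-\htil)|^2$. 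The shift is defined through an ODE of the form
\[
\dot X=-\frac{M}{\e^4}\,Y(U)\big(|\mathcal B|+1\big),
\]
following Kang--Vasseur. This gives $\dot X\,Y\le -\frac{\e^4}{M}|\dot X|^2$, and it gives \eqref{shift-control-u} once the integrated dissipation is known to be bounded by the initial relative entropy. Existence of $X\in W^{1,1}$ follows by Cauchy--Lipschitz, using the regularity in $\Xcal_T$.

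\textbf{Step 3: the main obstacle.} The key estimate is to show $\dot X Y+\Jcal^{bad}-\Gcal\le -\d_0(\text{good terms})$. We split into two cases according to whether $|p(v)-p(\vtil)|$ is small or large.
\begin{itemize}
\item \emph{Small values.} We apply the Kang--Vasseur weighted Poincaré inequality on $[0,1]$ after the change of variables $y=\frac{p(v_-)-p(\vtil)}{\e}$. The leading hyperbolic term $\int a'(h-\htil)^2$ is removed by writing $h-\htil\approx-\frac{p(v)-p(\vtil)}{\s}$ plus an error controlled by the $h$-diffusion. The resulting quadratic-plus-cubic functional in $p(v)-p(\vtil)$ is then bounded by the diffusion, with $\d_0$ small and $\e/\l<\d_0$.
\item \emph{Large values.} We truncate. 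The conditions \eqref{g-a-condition} on $\g$ and $\a$ are needed here: the degenerate weight $v^{\g-\a}$ (resp.\ $v^{-\a-1}$) must still dominate the nonlinear remainders both near vacuum and for large $v$. The restriction $c\le 9/100$ ensures that $\t_2$ is comparable to $\t_1$, so that both diffusions can be used.
\end{itemize}
We expect this nonlinear, degenerate-diffusion control to be the hardest step. Our first attempt would be to separate the cases $v$ large and $v$ small, using the bounds $Q(v|\vtil)\gtrsim |v-\vtil|$ and $\gtrsim v^{1-\g}$, respectively.

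\textbf{Step 4: conclusion.} Integrating in time gives \eqref{cont_main-u}. Since $\int_0^T \frac{\e^4}{M}|\dot X|^2\,dt$ is also controlled, setting $f=\e^{-2}\times(\text{dissipation density})$ yields \eqref{shift-control-u}. Finally, $\int E=\int\et$ holds by the definition of $h$.
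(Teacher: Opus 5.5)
Your overall architecture (effective velocity, weight $a$, shift, Poincaré inequality near the shock, truncation away from it) matches the paper. However, the mechanism you give for the near-shock step is not the right one, and your reading of $c\le\frac{9}{100}$ is reversed. The maximization in $h-\htil$ is an exact completion of the square,
\[
\int_\RR a'(p(v)-p(\vtil))(h-\htil)\,d\x-\frac{\s_\e}{2}\int_\RR a'(h-\htil)^2d\x=\frac{1}{2\s_\e}\int_\RR a'(p(v)-p(\vtil))^2d\x-\frac{\s_\e}{2}\int_\RR a'\Big(h-\htil-\frac{p(v)-p(\vtil)}{\s_\e}\Big)^2d\x ,
\]
used on $\{p(v)\le p(\vtil)+\d_3\}$. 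So there is no error for the $h$-diffusion to absorb. The last square is simply the good term $\Gcal_h^+$, and what must be shown nonpositive is a functional of $v$ alone.

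The $h$-diffusion $\Dcal_h$ is of no use there, for two reasons. It does not contain $\rd_\x(p(v)-p(\vtil))$. Also, because of its weight $v^{-\a-1}$, it is not monotone under the truncation $v\mapsto\vbar$: the analogue of \eqref{keyD}, $\Dcal_h(U)\ge\Dcal_h(\Ubar)$, fails. Hence the nonlinear Poincaré inequality must be closed with $\t_1\Dcal_p$ only. After rescaling in the $y$-variable this term becomes $\t_1\int_0^1 y(1-y)|W_y|^2dy$, and the sharp inequality of Proposition \ref{prop_NPI} needs a coefficient of at least $0.9$ there. That is precisely the role of $c\le\frac{9}{100}$, i.e. $\t_1\ge 0.9$, i.e. $\t_2\le \t_1/9$: the second diffusion is \emph{small}, not comparable. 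If the two diffusions could simply be combined, $\t_1+\t_2=1$ would be available for every $c\in[0,\frac14]$ and the hypothesis would be pointless.

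Two further ingredients are missing.

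\emph{The split on $\abs{Y(U)}$.} You never split according to the size of $\abs{Y(U)}$. The whole large-value analysis rests on the localized smallness \eqref{locE}, $\int a'(h-\htil)^2+\int a'Q(v|\vtil)\le C\e^2/\l$, which holds only when $\abs{Y(U)}\le\e^2$. This bound does three things:
\begin{enumerate}
\item it makes $C_2$ in \eqref{lbisC2}, and hence $\d_3$, independent of the perturbation;
\item it yields a point $\x_0\in[-1/\e,1/\e]$ where the truncation is inactive;
\item from that point follow the pointwise interpolation estimates of Lemma \ref{lem-pw}, the bound on $a'v^{-(\g+\a)}$, and the specific uses of $\g\le\frac{5}{4}$ and $\g\le 1+\frac{\a}{3}$.
\end{enumerate}
Your ``large values'' step, as written, is not yet an argument.

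\emph{The shift bound.} With your ODE $\dot{X}=-\frac{M}{\e^4}Y(\abs{\Bcal}+1)$ one has $\dot{X}\,Y=-\frac{\e^4}{M}\frac{|\dot{X}|^2}{\abs{\Bcal}+1}$. So the claimed inequality $\dot{X}\,Y\le-\frac{\e^4}{M}|\dot{X}|^2$ goes the wrong way and gives no $L^2$ bound on $\dot X$. To reach \eqref{shift-control-u} you need the additional estimate \eqref{bad-bound2}, which bounds the bad terms by $\Dcal_h+\Dcal_p+C\d_0$ when $\abs{Y}\le\e^2$, so that they are integrable in time. This is how the paper concludes, with its saturated shift $\Phi_\e(Y)\big(2|\Jcal^{bad}|+2|\Pcal_1|+2|\Pcal_2|+1\big)$.
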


\begin{remark}
(1)	In Theorem \ref{thm_main}, we restrict the range of the parameter $c$ as $[0,\frac{9}{100}]$. In this range of $c$, we have $\tau_1=\frac{1+\sqrt{1-4c}}{2}\ge\frac{9}{10}$, which will be importantly used in the proof of the contraction estimate. Notice that, the extreme case of $c=0$ (i.e. $\kappa=0$) follows from the contraction property of the NS system as in \cite{EEKO-ISO,KV-JEMS21,KV-Inven21}. \\
(2) The result of Theorem \ref{thm_main} is independent of the strength of the constant $b$ of $\mu$ in \eqref{pressure}. 
\end{remark}

\vspace{2mm}

Next, we state the global-in-time existence result for solutions to the NSK system in the function class \(\Xcal_T\), which accommodates arbitrarily large initial data.

\begin{theorem} \label{thm:existence}
Let \(\g,\a\) be any constants satisfying \eqref{g-a-condition} and \(\a<1/2\), and \(c\in[0,\frac{9}{100}]\).
Let \((v_-,u_-)\in\RR^+\times\RR\) be a given constant state.
Let \(\e_0>0\) be the constant provided by Theorem \ref{thm_main} associated with \((v_-,u_-)\).
For any \((v_+,u_+)\in\RR^+\times\RR\) which satisfies \eqref{end-con} and \(|p(v_+)-p(v_-)|=\e<\e_0\), let \(\Util=(\vtil,\util)\) be the traveling wave solution of \eqref{shock_0} connecting \((v_-,u_-)\) and \((v_+,u_+)\).
Consider an initial datum \(U_0 \coloneqq (v_0,u_0)\) with 
\[
v_0-\vtil \in H^2(\RR), \qquad
u_0-\util \in H^1(\RR), \qquad
0<v^{-1} \in L^\infty(\RR).
\]
Then, there exists a unique global-in-time solution \(U\) to \eqref{main0} with \eqref{pressure}--\eqref{kappa} subject to the initial datum $U_0$ such that for any \(T>0\),
\begin{align*}
&(v-\vtil,u-\util) \in C([0,T];H^2(\RR))\cap L^2(0,T;H^3(\RR))
\times C([0,T];H^1(\RR))\cap L^2(0,T;H^2(\RR)),\\
&\hspace{50mm} 0 < v^{-1} \in L^\infty(0,T;L^\infty(\RR)).
\end{align*}
\end{theorem}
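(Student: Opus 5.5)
The plan is the standard continuation argument: local existence plus uniform-in-time a priori bounds, where the bounds come from the contraction estimate of Theorem \ref{thm_main} together with higher-order energy estimates and a pointwise bound on $v$ and $v^{-1}$.

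\textbf{Step 1 (local existence).} Using the parabolic structure in the effective variables $(v,h)$, with $h=u-\t_1\g v_x/v^{\a+1}$, I would first prove local existence and uniqueness in $\Xcal_{T_0}$ for data with $v_0-\vtil\in H^2$, $u_0-\util\in H^1$ and $v_0\ge \underline m>0$. The transformation turns \eqref{main0} into a system of two degenerate parabolic equations, with diffusion coefficients $\t_1\g v^{\g-\a}$-type for $v$ (in terms of $p(v)$) and $\t_2 v^{-\a-1}$-type for $h$. Since $v$ is bounded above and below locally, the system is uniformly parabolic, and a contraction mapping in the space $C([0,T_0];H^2)\cap L^2(H^3)\times C([0,T_0];H^1)\cap L^2(H^2)$ applies. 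The key point is that $T_0$ depends only on the $H^2\times H^1$ norm of the data and on the bounds of $v_0$ and $v_0^{-1}$. Uniqueness follows from an $L^2$ difference estimate.

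\textbf{Step 2 (a priori estimates).} Suppose the solution exists on $[0,T]$. Theorem \ref{thm_main} then gives the following, uniformly in $T$:
\begin{itemize}
\item a bound on $\int a E(U(t,\cdot+X(t))|\Util)\,dx$;
\item control of the dissipation $\int_0^T\!\int v^{\g-\a}|\rd_x(p(v)-p(\vtil))|^2$ and $\int_0^T\!\int v^{-\a-1}|\rd_x(h-\htil)|^2$;
\item the shift bound \eqref{shift-control-u}, so that $X(t)$ grows at most linearly and $\vtil(\cdot-X)-\vtil$ stays in $H^2$.
\end{itemize}
The main obstacle is to obtain \emph{time-uniform pointwise bounds} $0<\underline m\le v\le \overline M$ from this relative-entropy information without smallness. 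The relative entropy controls $\int Q(v|\vtil)$ and $\int (h-\htil)^2$, hence also the BD-type quantity $\int (v_x/v^{\a+1})^2$ up to $u-\util$ terms. I would combine these in a one-dimensional Sobolev argument: with $\phi(v)=\int^v s^{-\a-1}ds$ adjusted near $1$, the quantity $\phi(v)-\phi(\vtil)$ is bounded in $H^1$ after combining with $\int Q(v|\vtil)$. Here $\a<1/2$ is used so that $\phi(v)\to-\infty$ as $v\to0$ strongly enough to exclude vacuum; this is the role of the extra hypothesis. The upper bound follows from the growth of $Q$ and $\phi$ at infinity.

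\textbf{Step 3 (higher norms and continuation).} With $v$ bounded above and below, the coefficients are uniformly parabolic. Standard energy estimates for $\rd_x(v-\vtil)$, $\rd_x^2(v-\vtil)$ and $\rd_x(u-\util)$ then give, via Gronwall, bounds on the $H^2\times H^1$ norms growing at most exponentially in $T$. In these estimates the $L^2$ dissipations from Step 2 and the bounded shift speed absorb the nonlinear terms. These $T$-dependent but finite bounds let the local solution of Step 1 be extended repeatedly, with the step size controlled by the bounds, to any $T$, which yields the global solution in the stated class.
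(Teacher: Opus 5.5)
Your overall architecture (local existence, a priori bounds driven by the contraction, continuation) matches the paper's, but Step 2 has a genuine gap. The relative entropy controls only $h-\htil=(u-\util)-\t_1(w-\tilde w)$, where $w:=\frac{\m(v)}{v}v_x$ and $\tilde w:=\frac{\m(\vtil)}{\vtil}\vtil'$. Since $u-\util$ is not yet controlled in $L^2$ (that is part of what must be proved), ``the BD quantity up to $u-\util$ terms'' gives no bound on $\sup_t\|w\|_{L^2}$. Without that bound you have no route to pointwise bounds on $v$. Your Step 3 estimate for $\rd_x(v-\vtil)$ cannot fill this, because it presupposes the uniform parabolicity it is meant to produce. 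The missing idea is a weighted energy estimate for $w$ taken directly from $v_t-h_x=\t_1 w_x$, which needs no pointwise information on $v$:
\[
\frac{d}{dt}\frac12\int_\RR w^2\,dx+\frac{\t_1}{2}\int_\RR\frac{\m(v)}{v}w_x^2\,dx\le C\int_\RR\frac{\m(v)}{v}h_x^2\,dx .
\]
This is closed by the $h$-dissipation from the contraction. That dissipation, however, only controls $\rd_x h$ relative to the \emph{shifted} profile. To unshift it you must bound $\int_0^T\!\int_\RR v^{-\a-1}|\htil'|^2$, which near vacuum requires $\|v^{-(\a+1)/2}\|_{L^2(0,T;L^\infty(\RR))}\le C$. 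The paper gets this from three ingredients: the $p$-dissipation $\int_0^T\!\int_\RR v^{-\a-\g-2}v_x^2\le C$, a reference point where $v^{-1}\le N$, and the level-set bound $|\{v\le 1/N\}|\le C\int_\RR Q(v|\vtil)$.

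Second, you misplace the hypothesis $\a<1/2$, and your upper-bound argument fails. Vacuum exclusion needs no restriction on $\a$: $(v^{-\a}-N^{\a})_+(z)\le C\|w\|_{L^2}\,|\{v\le 1/N\}|^{1/2}$, with $\log v$ in place of $v^{-\a}$ if $\a=0$. A larger $\a$ only makes $\phi\to-\infty$ faster. For the upper bound, take $\phi'=\m(v)/v=\g v^{-\a-1}$. Then $\phi(v)=-\g v^{-\a}/\a\to 0$ as $v\to\infty$ when $\a>0$, so $\phi$ does not grow at infinity and gives no control. Instead write $(v^k)_x=\frac{k}{\g}v^{k+\a}w$, so that
\[
(v^k-N^k)_+(z)\le C\|w\|_{L^2}\Big(\int_\RR v^{2k+2\a}\mathbf{1}_{\{v\ge N\}}\,dx\Big)^{1/2}.
\]
Since $Q(v|\vtil)$ grows only linearly in $v$, the last integral is bounded by $C\int_\RR Q(v|\vtil)$ only if $2k+2\a\le 1$ for some $k>0$. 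This is exactly where $\a<1/2$ enters. Finally, $\phi(v)-\phi(\vtil)$ is not bounded in $H^1$ as you claim. Near vacuum, $|\phi(v)-\phi(\vtil)|^2\sim v^{-2\a}$ (or $\log^2 v$ if $\a=0$), which is not dominated by $Q(v|\vtil)\sim v^{1-\g}$ (or $|\log v|$ if $\g=1$) under \eqref{g-a-condition}. So the level-set and reference-point argument above must replace your Sobolev step.
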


\begin{remark}
We refer to the existence result by Burtea--Haspot \cite{BH-22} for the case of $\alpha>1$.
\end{remark}

\vspace{2mm}
To present the stability result of Riemann shock in the class of  zero viscosity-capillarity limits,  we need to extend the notion of relative entropy defined in \eqref{et-def}.
The need for this extension can be demonstrated as follows: we will consider vanishing viscosity-capillarity limits to the NSK system for the first component of \(Q(v_1|v_2)\), i.e., \(v_1\). Since the uniform bounds for solutions are only available in \(L^1\) space, the limits could be measures on \(\RR^+\times\RR\).
This may still be interpreted as a physical phenomenon, namely the emergence of cavitation and this is why we need to generalize the relative entropy to measures defined on \(\RR^+ \times \RR\).
Fortunately, since we only consider Riemann shocks, it suffices to extend the relative entropy only for the cases when we compare a measure \(dv\) with a simple function \(\vbar\) which takes only two values \(v_-\) and \(v_+\).
Let \(v_a\) be the Radon-Nikodym derivative of \(dv\) with respect to the Lebesgue measure and \(dv_s\) be its singular part, i.e., \(dv=v_a dtdx+dv_s\).
Then, we define the relative functional as 
\beq\label{gent}
dQ(v|\vbar) \coloneqq Q(v_a|\vbar)dtdx + \abs{Q'(\VBar(t, x))} dv_s(t,x),
\eeq
where \(\VBar\) is defined as
\[
\VBar(t,x)=
\begin{cases}
\max(v_-,v_+) & \text{for } (t,x)\in \overline{\O_M} (\eqqcolon \text{the closure of }\O_M), \\
\min(v_-,v_+) & \text{for } (t,x)\notin \overline{\O_M},
\end{cases}
\]
and \(\O_M=\{(t,x)|\vbar(t,x)=\max(v_-,v_+)\}\).
Here we use \(\VBar\) instead of \(\vbar\) in order to handle the singular part \(dv_s\).
We remark that the relative entropy is now a measure itself.
Furthermore, if \(v\in L^{\infty}(\RR^+;L^\infty(\RR)+\Mcal(\RR))\) and its Radon-Nikodym derivative \(v_a\) is away from \(0\), then \(dQ(v|\vbar)\) belongs to \(L^{\infty}(\RR^+;\Mcal(\RR))\), where \(\Mcal\) is the space of nonnegative Radon measures.

\vspace{2mm}
We are now ready to present the  the stability and uniqueness of shock to \eqref{Euler}:
\begin{theorem}\label{thm_inviscid}
Let \(\g,\a\) be any constants satisfying \eqref{g-a-condition} and $c\in[0,\frac{9}{100}]$.
Let \((v_-,u_-)\in\RR^+\times\RR\) be a given constant state and \(\e_0>0\) be the constant provided by Theorem \ref{thm_main} associated with \((v_-,u_-)\).
Then, for any \((v_+, u_+)\in \RR^+\times \RR\) satisfying \eqref{end-con} with \(\abs{p(v_+)-p(v_-)} = \e<\e_0\), the following holds.\\
Let \((\vtn, \utn)\) be a viscous-dispersive shock scaled by $(\vtn, \utn)(x-\sigma t) = (\vt, \ut)(\frac{x-\sigma t}{\n}) $ and \(U^0=(v^0, u^0)\) be an initial datum of \eqref{Euler} satisfying 
\[
\Ecal_0 \coloneqq \int_\RR \et((v^0, u^0)|(\vbar, \ubar))dx < \infty.
\]
Let \(\{(v_0^\nu, u_0^\nu)\}_{\nu>0}\) be any sequence of smooth initial data to \eqref{inveq} on \(\RR\) such that
\begin{equation} \label{ini_conv}
	\begin{aligned}
		&\lim_{\n\to 0}v_0^\nu = v^0, \quad \lim_{\n\to 0}u_0^\nu = u^0 \quad a.e., \quad v_0^\nu > 0, \\
		&\lim_{\n\to 0}\int_\RR Q(v_0^\nu|\vtn) + \frac{1}{2}\Big(u_0^\nu-\n \frac{\t_1 \g(v_0^\nu)_x}{(v_0^\nu)^{\a+1}}- \utn+\n\frac{\t_1 \g(\vtn)_x}{(\vtn)^{\a+1}}\Big)^2 dx = \Ecal_0.
	\end{aligned}
\end{equation}
For any given \(T>0\), let \(\{(\vn, \un)\}_{\n>0}\) be a sequence of solutions in \(\Xcal_T\) to \eqref{inveq} with the initial datum \((v_0^\nu, u_0^\nu)\) as above. 
Then, as \(\n\to 0\), there exist limits (up to a subsequence) \(v_\infty\) and \(u_\infty\) such that
\begin{equation}\label{wconv}
	\vn\wc v_\infty, \quad \un\wc u_\infty ~\text{ in }~ \Mcal_{loc}((0, T)\times \RR),
\end{equation}
where \(v_\infty\) lies in \(L^\infty(0, T, L^\infty(\RR)+\Mcal(\RR))\) and \(\Mcal_{loc}((0, T)\times \RR)\) is the space of locally bounded Radon measures. \\
In addition, there exist a shift \(X_\infty\in BV((0, T))\) and a constant \(C>0\) such that \(d Q(v_\infty|\vbar)\in L^\infty(0, T;\Mcal(\RR))\), and for almost every \(t\in (0, T)\), 
\begin{equation} \label{uni-est}
	\int_{x\in\RR} dQ(v_\infty|\vbar(x-X_\infty(\cdot)))(t) 
	+\int_{\RR} \frac{\abs{u_\infty(t,x) - \ubar(x-X_\infty(t))}^2}{2}dx \le C\Ecal_0.
\end{equation}
Moreover, the shift \(X_\infty\) satisfies 
\begin{equation}\label{X-control}
	\abs{X_\infty(t)-\s t} \le \frac{C(T)}{\abs{v_--v_+}}(\sqrt{\Ecal_0}+\Ecal_0).
\end{equation}
Therefore, entropy shocks \eqref{shock-0} with small amplitude of the polytropic Euler system \eqref{Euler} are stable (up to shifts) and unique in the class of vanishing viscosity-capillarity limits of solutions to the NSK system \eqref{inveq}. 

\end{theorem}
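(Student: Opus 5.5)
The plan is to follow the strategy of the inviscid-limit argument of Kang--Vasseur for Navier--Stokes, using Theorem \ref{thm_main} as the uniform-in-$\nu$ input. \textbf{Step 1 (scaling).} If $(v^\nu,u^\nu)$ solves \eqref{inveq}, then $(v,u)(t,x)=(v^\nu,u^\nu)(\nu t,\nu x)$ solves \eqref{main0}. The effective velocity $u-\t_1\g v_x/v^{\a+1}$ becomes $u^\nu-\nu\t_1\g v^\nu_x/(v^\nu)^{\a+1}$. I apply Theorem \ref{thm_main} in the moving frame $\xi=x-\s t$ and then scale back.

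Fix $\l=\d_0/2$ (admissible for $\e<\e_0$ small). This gives a weight $a$ with $a\in[1+\min a_\pm,1+\max a_\pm]$ bounded above and below uniformly in $\nu$, and a shift $X_\nu(t)=\nu X(t/\nu)$. The contraction estimate \eqref{cont_main-u}, together with the initial convergence \eqref{ini_conv}, yields for all $t\le T$
\[
\int_\RR Q(v^\nu(t,x)|\vtn(x-\s t-X_\nu(t)))+\tfrac12\big(h^\nu-\htil^\nu\big)^2dx\le C\Ecal_0+o(1).
\]
The second term of \eqref{cont_main-u} also gives the $\nu$-uniform dissipation bound $\int_0^T\!\!\int \nu^{-1}|a'(\cdot/\nu)|Q\,dx\,dt\lesssim \Ecal_0/\e$.

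\textbf{Step 2 (shift bounds).} Scaling \eqref{shift-control-u} gives $|\dot X_\nu(t)|\le \frac{C}{\e^2}(f_\nu(t)+1)$ with $\|f_\nu\|_{L^1(0,T)}\le C\Ecal_0/\e$ uniformly. Hence $X_\nu$ is bounded in $BV(0,T)$. By Helly's theorem, $X_\nu\to \tilde X_\infty$ pointwise a.e. along a subsequence, and I set $X_\infty(t)=\s t+\tilde X_\infty(t)$.

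\textbf{Step 3 (weak limits).} The relative entropy bound gives $Q(v^\nu|\cdot)\ge c|v^\nu|-C$ for large $v$ (when $\g\ge1$ this is at least linear in $v$). So $v^\nu$ is bounded in $L^\infty(0,T;L^1_{loc})$ near the end states, and it converges weakly-$*$ in $\Mcal_{loc}$ to $v_\infty$. For the velocity, $h^\nu$ is bounded in $L^\infty L^2$ relative to $\htil^\nu$, which gives a weak limit $h_\infty$. I also need $u^\nu-h^\nu=\nu\t_1\g v^\nu_x/(v^\nu)^{\a+1}\to0$ in distributions. This holds because it equals $\nu\,\partial_x G(v^\nu)$ with $G(v)=-\frac{\t_1\g}{\a}v^{-\a}$ (or $\t_1\g\log v$ when $\a=0$). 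I then need $G(v^\nu)$ bounded in $L^1_{loc}$, which follows from the bound on $Q$, since $Q$ controls $v^{-\g+1}$ near zero, $\a\le\g$, and $\log$ growth is controlled at infinity. Thus $u_\infty=h_\infty$, giving \eqref{wconv}. In the same step, $\vtn(\cdot-\s t-X_\nu)\to\vbar(\cdot-X_\infty)$ a.e.

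\textbf{Step 4 (lower semicontinuity).} The map $(v,w)\mapsto Q(v|w)$ is convex in $v$, and its recession function is $|Q'(w)|\,v$. By the Reshetnyak / Demengel--Temam type lower semicontinuity for convex functionals of measures, $dQ(v_\infty|\vbar)$ in the sense of \eqref{gent} is bounded by $\liminf$ of the $\nu$-integrals; the quadratic $u$-term is handled the same way. The $\VBar$ choice on $\overline{\O_M}$ takes care of the singular part near the jump, where the reference function is discontinuous: we take the larger value, giving the smaller $|Q'|$, which is the worse-case bound and so is consistent with the liminf. To pass from space-time to a.e. $t$, I test with $\phi(t)\ge0$ and use the uniform-in-$t$ bound. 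This yields \eqref{uni-est}.

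\textbf{Step 5 (shift estimate \eqref{X-control}).} I use the weak form of $v_t-u_x=0$ integrated against a cutoff near the shock. The mass of $v_\infty-\vbar(\cdot-\s t)$ on an interval of length comparable to $|X_\infty-\s t|$ is at least $|v_--v_+|\,|X_\infty(t)-\s t|$, up to the entropy error. The entropy error is controlled by $\sqrt{\Ecal_0}+\Ecal_0$, since $Q(v|w)\gtrsim \min(|v-w|^2,|v-w|)$. The flux is controlled by $\int|u_\infty-\ubar|\lesssim \sqrt{T\Ecal_0}$ on bounded sets. This gives the bound with constant $C(T)$.

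\textbf{Main obstacle.} I expect the hardest step to be the lower semicontinuity with the singular part in the presence of a discontinuous $\vbar$ and a merely BV shift. The difficulty is justifying that the limiting density of $Q$ dominates $|Q'(\VBar)|dv_s$ on the interface. I would first try to localize away from the curve $x=X_\infty(t)$, where $\vbar$ is constant and the standard convex-measure semicontinuity applies. The curve itself has zero Lebesgue measure, and there I would bound the singular mass using the smaller of the two values of $|Q'|$, which is exactly what $\VBar$ does.
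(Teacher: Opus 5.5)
Your outline (the scaling $U^\nu(t,x)=U(t/\nu,x/\nu)$ with $X_\nu(t)=\nu X(t/\nu)$, Helly compactness of the shifts, weak compactness, lower semicontinuity of \eqref{gent}, and the linear continuity equation with Rankine--Hugoniot for \eqref{X-control}) is the route the paper indicates. However, Step 3 has a genuine gap: the identification $u_\infty=h_\infty$. You assert that $G(v^\nu)$, which behaves like $(v^\nu)^{-\alpha}$, is bounded in $L^1_{loc}$ because $Q$ controls $v^{1-\gamma}$ near vacuum and $\alpha\le\gamma$. But $v^{-\alpha}\lesssim v^{1-\gamma}$ (resp.\ $\lesssim|\log v|$ when $\gamma=1$) as $v\to0$ requires $\alpha\le\gamma-1$. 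Condition \eqref{g-a-condition} gives $\alpha\ge 3(\gamma-1)$, so $\alpha>\gamma-1$ for every admissible pair except $(\gamma,\alpha)=(1,0)$. Concretely, for $\gamma=\alpha=1$, setting $v=\delta$ on a set of measure $1/\log(1/\delta)$ costs $O(1)$ relative entropy, while $\int v^{-1}\,dx\sim 1/(\delta\log(1/\delta))\to\infty$. So the relative entropy alone does not control $u^\nu-h^\nu=\nu\,\partial_x G(v^\nu)$. Moreover, a merely distributional convergence would not give the $L^1_{loc}$ bound on $u^\nu$ needed for $u^\nu\rightharpoonup u_\infty$ in $\mathcal{M}_{loc}$. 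Since \eqref{uni-est} and the flux bound in your Step 5 are stated for $u_\infty$ but derived from the $h^\nu$-part of the entropy, the gap propagates to them.

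The missing ingredient is the parabolic dissipation in \eqref{cont_main-u}, which your proposal never uses; this is what the paper means by ``the uniform bound for the diffusion on $v^\nu$''. Under your scaling, the third term of \eqref{cont_main-u} becomes $\nu\int_0^T\!\int (v^\nu)^{\beta}|\partial_x(p(v^\nu)-p(\tilde v^\nu))|^2\,dx\,dt\le C$ in the shifted frame, with $\beta=\gamma-\alpha\in[0,1]$. Also $u^\nu-h^\nu=\nu\tau_1\gamma v^\nu_x/(v^\nu)^{\alpha+1}=-\nu\tau_1(v^\nu)^{\beta}\partial_x p(v^\nu)$. Split $p(v^\nu)=(p(v^\nu)-p(\tilde v^\nu))+p(\tilde v^\nu)$.
\begin{itemize}
\item For the first piece, Cauchy--Schwarz together with $v^\beta\le 1+v\lesssim 1+Q(v|\tilde v^\nu)$ gives a bound of $O(\sqrt\nu)$ in $L^1_{loc}$.
\item The second piece equals, up to sign and shift, $\frac{\varepsilon}{\lambda}(v^\nu)^\beta a'(\cdot/\nu)$ by \eqref{der-a}. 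Its $L^1$ norm is $O(\nu)$ by the second term of \eqref{cont_main-u}, namely $\int_0^T\!\int \nu^{-1}|a'(\cdot/\nu)|\,Q\,dx\,dt\lesssim \mathcal{E}_0/\varepsilon$.
\end{itemize}
Hence $u^\nu-h^\nu\to0$ in $L^1_{loc}$, which gives both the $\mathcal{M}_{loc}$ compactness of $u^\nu$ and $u_\infty=h_\infty$. With this replacement, the rest of your outline matches the paper's own argument, which is itself only sketched.
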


\begin{remark}
Following the arguments of the previous works \cite{EEKO-ISO,KV-Inven21}, Theorem \ref{thm_inviscid} is an immediate consequence of Theorem \ref{thm_main}. So, we omit the details of the proof for   \ref{thm_inviscid}.
Indeed, the sequence of well-prepared initial data \(\{(v_0^\nu, u_0^\nu)\}_{\nu>0}\) can be constructed by mollifying an approximate sequence of initial perturbation $\mathcal{E}_0$ as in \cite[Section 5.1]{KV-Inven21}.
Since the contraction estimate \eqref{cont_main-u} still holds for the scalings $U^\nu(t,x):=U(\frac{t}{\nu},\frac{x}{\nu})$ (as the solution to \eqref{inveq}) and $X^\nu(t):=\nu X(\frac{t}{\nu})$, we use the contraction estimate with the well-prepared initial data \eqref{ini_conv} to get the weak compactness for \eqref{wconv}. Notice that $h^\nu \wc u_\infty$ thanks to the uniform bound for the diffusion on $v^\nu$ by the contraction estimate. Then, we obtain the stability estimate \eqref{uni-est}, by using the weakly lower semi-continuity of the generalized relative entropy \eqref{gent}, and the strong convergence of $X^\nu$ by BV compactness from \eqref{shift-control-u}.  Finally, the estimate \eqref{X-control} for the uniqueness result is obtained by using  Rankine-Hugoniot condition with  the linearity of the continuity equation. 
\end{remark}

\vspace{2mm}
The rest of the paper is organized as follows. In Section \ref{sec:trans}, we transform the NSK system \eqref{main0} into the system of two degenerate parabolic equations by introducing the effective velocity $h$, and restate Theorem \ref{thm_main} in terms of $(v,h)$. Then, we present the proof of the restated theorem in Section \ref{sec:3}. In Section \ref{sec:4}, we provide the proof of Proposition \ref{prop:main}, which is a key estimate in Section \ref{sec:3}. Finally, Section \ref{sec:5} is devoted to the proof of Theorem \ref{thm:existence}.

\section{Transformation of the system} \label{sec:trans}
\setcounter{equation}{0}

In this section, we transform the NSK system \eqref{main0} by using an effective velocity  in a similar form as in \cite{CH-SIMA13,GL-CPAM15}, which is motivated by the Bresch--Desjardins entropy \cite{BD-CMP03,BD-JMPA06}.
We define the effective velocity $h$ as
\[h(t,x) = u(t,x) -\tau_1\mu(v)\frac{v_x}{v},\quad \tau_1 = \frac{1+\sqrt{1-4c}}{2},\]
where $c$ is the constant of the relation \eqref{kappa}.
The initial datum \(h_0\) is defined by  \((v_0,u_0)\) in the same manner. Note that the choice of $\tau_1$ satisfies the relation $\tau_1(1-\tau_1)=c$. Then \eqref{main0}$_1$ can be written as
\begin{equation}\label{eq:v_effect}
	v_t - h_x =\tau_1\Big(\mu(v)\frac{v_x}{v}\Big)_x.
\end{equation}
On the other hand, the effective velocity $h$ satisfies
\begin{align}
\begin{aligned}\label{h_eq}
	h_t +p(v)_x &= u_t -\tau_1\Big(\mu(v)\frac{v_x}{v}\Big)_t+p(v)_x\\
	&=(1-\tau_1)\Big(\mu(v)\frac{u_x}{v}\Big)_x+\Big[\kappa(v)\Big(-\frac{v_{xx}}{v^5}+\frac{5v_x^2}{2v^6}\Big)-\frac{\kappa'(v)(v_x)^2}{2v^5}\Big]_x.
\end{aligned}
\end{align}
Using the definition of the effective velocity, the right-hand side of \eqref{h_eq} can be written as
\begin{align*}
	&(1-\tau_1)\Big(\mu(v)\frac{h_x}{v}\Big)_x+\Big[\tau_1(1-\tau_1)\frac{\mu(v)}{v}\Big(\frac{\mu(v)v_x}{v}\Big)_x+\kappa(v)\Big(-\frac{v_{xx}}{v^5}+\frac{5v_x^2}{2v^6}\Big)-\frac{\kappa'(v)v_x^2}{2v^5}\Big]_x\\
	&=\tau_2\Big(\mu(v)\frac{h_x}{v}\Big)_x +\Big[\Big(c\frac{\mu(v)}{v}\frac{\mu'(v)v-\mu(v)}{v^2}+\frac{5\kappa(v)}{2v^6}-\frac{\kappa'(v)}{2v^5}\Big)v_x^2+\Big(c\frac{\mu(v)^2}{v^2}-\frac{\kappa(v)}{v^5}\Big)v_{xx}\Big]\\
	&=\tau_2 \Big(\mu(v)\frac{h_x}{v}\Big)_x,
\end{align*}
where $\tau_2:=1-\tau_1$ and the second equality comes from the choice of the capillarity coefficient $\kappa(v) = c\mu(v)^2v^3$. Therefore, the effective velocity $h$ satisfies
\begin{equation}\label{eq:h_effect}
	h_t+p(v)_x = \tau_2 \Big(\mu(v)\frac{h_x}{v}\Big)_x.
\end{equation}
Combining \eqref{eq:v_effect} and \eqref{eq:h_effect}, the system \eqref{main0} can be transformed into the system of degenerate parabolic equations:
\begin{equation} \label{main00}
\begin{cases}
v_t - h_x = \t_1 \big(\m(v)\frac{v_x}{v}\big)_x = - \t_1(v^{\g-\a} p(v)_x)_x, \\
h_t + p(v)_x = \t_2 \big(\m(v)\frac{h_x}{v}\big)_x = \t_2\g (v^{-\a-1}h_x)_x.
\end{cases}
\end{equation}
Here, we also assumed \(b=\g\) for simplicity, as the value of \(b\) does not affect our analysis.

We now assume that the positive constants \(\t_1\) and \(\t_2\) satisfy 
\begin{equation} \label{0.9}
\t_1 \ge 9 \t_2, \quad\text{or equivalently,}\quad \tau_1 \ge 0.9.
\end{equation}
In terms of the constant $c$, this condition requires $c\le \frac{9}{100}$, as in the statements of the main theorems. 

On the other hand, the viscous-dispersive shocks in \((v,h)\)-variable corresponding to \eqref{shock_0} satisfy the following ODEs:
\begin{equation} \label{VS}
\begin{cases}
-\s_\e \rd_\x \vtil - \rd_\x \htil = -\t_1\rd_\x (\vtil^{\g-\a} \rd_\x p(\vtil)), \\
-\s_\e \rd_\x \htil + \rd_\x p(\vtil) = \t_2\g (\vtil^{-\a-1}\rd_\x \htil),\\
\lim_{\x\to \pm\infty} (\vtil,\htil)(\x) = (v_\pm,u_\pm).
\end{cases}
\end{equation}
For the sake of simplicity, we rewrite \eqref{main00} as the following system, based on the change of variables \((t,x)\mapsto (t,\x:=x-\s_\e t)\):
\begin{equation} \label{main}
\begin{cases}
v_t -\s_\e v_\x - h_\x = - \t_1(v^{\g-\a} p(v)_\x)_\x, \\
h_t -\s_\e h_\x + p(v)_\x = \t_2\g (v^{-\a-1}h_\x)_\x, \\
v|_{t=0}=v_0, \quad h|_{t=0}=h_0.
\end{cases}
\end{equation}
Recalling the space \(\Xcal_T\), the global solutions to \eqref{main} are in the function space
\begin{multline*}
\Hcal_T \coloneqq \{(v,h) \mid v-\underline{v}\in C([0,T];H^1(\RR)), h-\underline{u}\in C([0,T];H^1(\RR)),\\
0<v^{-1}\in L^\infty((0,T)\times\RR)\}
\end{multline*}
where \(\underline{v}\) and \(\underline{u}\) are in \eqref{sm-end}.

\vspace{2mm}
We now restate the Theorem \ref{thm_main} in an equivalent form by using $(v,h)$-variables.

\begin{theorem} \label{thm_main3}
Let \(\g,\a\) be any constants satisfying \eqref{g-a-condition} and \(\t_1,\t_2 \in (0,1)\) be any constants with \(\t_1+\t_2=1\) and \(\t_1 \ge 0.9\). For a given constant state \((v_-,u_-) \in \RR^+\times\RR\), there exist constants \(\e_0,\d_0>0\) such that the following holds.\\
Fix any \(\e\in(0,\e_0),\) \(\l\in(\d_0^{-1}\e,\d_0),\) and \((v_+,u_+)\in\RR^+\times\RR\) satisfying \eqref{end-con} with \(|p(v_+)-p(v_-)|=\e\). Let \(\Util\coloneqq(\vtil,\htil)\) be the viscous-dispersive shock defined in \eqref{VS} and let \(U\coloneqq(v,h)\) be a solution to \eqref{main} in \(\Hcal_T\) for some $T>0$ subject to an initial datum \(U_0\coloneqq(v_0,h_0)\) which satisfies \(\int_\RR \et(U_0|\Util)dx < \infty\). Then, there exist a smooth monotone function \(a\colon\RR\to\RR^+\) with \(\lim_{x\to\pm\infty}a(x)=1+a_\pm\) for some constants \(a_-\) and \(a_+\) with \(|a_+-a_-|=\l\), a shift function \(X\in W^{1,1}(0,T)\) with \(X(0)=0\) and a constant \(C>0\) (independent of \(\e,\l,\d_0\) and \(T\)) such that 
\begin{equation} \label{cont_main}
	\begin{aligned}
		&\int_\RR a(\x)\et(U(t,\x+X(t))|\Util(\x))d\x \\
		&\qquad
		+ \d_0\frac{\e}{\l}\int_0^T \int_\RR \abs{\s_\e a'(\x)} Q(v(s,\x+X(s))|\vtil(\x))d\x ds \\
		&\qquad
		+ \d_0 \t_1 \int_0^T \int_\RR a(\x) v^{\g-\a}(s,\x+X(s))\abs{\rd_\x \left(p(v(s,\x+X(s)))-p(\vtil(\x))\right)}^2 d\x ds \\
		&\qquad
		+ \d_0 \t_2\g \int_0^T \int_\RR a(\x)  v^{-\a-1}(s,\x+X(s)) |\rd_\x (h(s,\x+X(s))-\htil(\x))|^2 d\x ds\\
		&\le \int_\RR a(\x)\et(U_0(\x)|\Util(\x))d\x,
	\end{aligned}
\end{equation}
and
\begin{equation} \label{shift-control}
	\begin{aligned}
		&|\dot{X}(t)| \le \frac{C}{\e^2}(f(t)+1), \quad \text{for a.e. } t\in[0,T], \\
		&\text{for some positive function } f \text{ satisfying } \norm{f}_{L^1(0,T)} \le \frac{\l}{\d_0\e}\int_\RR \et(U_0(x)|\Util(x))dx.
	\end{aligned}
\end{equation}	
\end{theorem}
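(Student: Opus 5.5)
We prove Theorem \ref{thm_main3} by the method of $a$-contraction with shifts, in the form developed for the Navier--Stokes system in the $(v,h)$ variables by Kang--Vasseur and Eun--Kang et al.; the system \eqref{main} is now a genuinely two-sided degenerate parabolic system, with diffusion in both components.

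\textbf{Weight and shift.} First we collect the properties of the viscous-dispersive shock \eqref{VS} in the $(v,h)$ variables. Integrating \eqref{VS} once reduces it to a first-order planar ODE system. For $\e$ small, a center-manifold or Majda--Pego type argument gives a unique monotone profile with
\[
|\vtil'|\sim \e^2 e^{-C\e|\x|}, \qquad |\htil'|\le C|\vtil'| .
\]
We then choose the weight
\[
a(\x)=1+\frac{\l}{\e}\bigl(p(v_-)-p(\vtil(\x))\bigr),
\]
with the sign adapted to 1- versus 2-shocks, so that $|a'|=\frac{\l}{\e}|p(\vtil)'|$ and $|a_+-a_-|=\l$. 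The shift is defined by the ODE
\[
\dot X(t)=-\frac{M}{\e^2}\Bigl[\int a\,\rd_\x h\,(\ldots)+\int a\,\rd_\x p(\vtil)\,(v-\vtil)\Bigr]
\]
(the standard choice annihilating the leading bad term $\dot X\,Y$ when $Y$ is large). It is Lipschitz in $t$ for solutions in $\Hcal_T$, so a Cauchy--Lipschitz argument gives $X\in W^{1,1}$. Once the contraction is known, \eqref{shift-control} follows from $|Y|\le C\sqrt{\int \et}+\ldots$ together with the dissipation bounds.

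\textbf{Relative entropy identity.} We compute
\[
\frac{d}{dt}\int a(\x)\,\et\bigl(U(t,\x+X)\,|\,\Util\bigr)\,d\x=\dot X\,Y+\Jcal^{bad}-\Jcal^{good}.
\]
The good terms are:
\begin{itemize}
\item the hyperbolic term $\frac{\s_\e}{2}\int a' \bigl(h-\htil-\frac{p(v)-p(\vtil)}{\s_\e}\bigr)^2$-type contributions, together with $\s_\e\int a'Q(v|\vtil)$;
\item the diffusions $\t_1\int a v^{\g-\a}|\rd_\x(p(v)-p(\vtil))|^2$ and $\t_2\g\int a v^{-\a-1}|\rd_\x(h-\htil)|^2$.
\end{itemize}
The bad terms are:
\begin{itemize}
\item the flux terms $\int a'(p(v)-p(\vtil))(h-\htil)$;
\item cross terms from the diffusion acting on $\Util$, for example $\t_1\int a'\,v^{\g-\a}(p-\tilde p)\,\rd_\x(p-\tilde p)$ and $\int a\,\rd_\x(v^{\g-\a}-\vtil^{\g-\a})\,\rd_\x\tilde p\,(p-\tilde p)$;
\item the analogous $h$-terms.
\end{itemize}
Because both equations diffuse, we expect the $h$-cross terms to be absorbed by the $h$-diffusion using $\t_2$ and Young's inequality. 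The condition $\t_1\ge 9\t_2$ enters exactly here: it lets the $v$-diffusion dominate the mixed terms produced when the viscous profile equations \eqref{VS} are used to rewrite $\htil'$ in terms of $\tilde p'$.

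\textbf{Main estimate and its obstacle.} The core step, which we isolate as a proposition, is
\[
-\frac{\d_0}{\e}\bigl|\dot X\bigr|^2+\Jcal^{bad}-\Jcal^{good}\le -\d_0\cdot(\text{dissipation}),
\]
proved under the smallness $|p(v)-p(\vtil)|\le\d_1$. We proceed as follows:
\begin{itemize}
\item Truncate $v$ at level $\d_1$.
\item Pass to the variable $y=\frac{p(\vtil)-p(v_-)}{p(v_+)-p(v_-)}\in(0,1)$ and set $w=p(\bar v)-p(\vtil)$ for the truncated $\bar v$.
\item Expand everything to second order in $w$ and $\e$.
\item Reduce to the sharp weighted Poincaré inequality
\[
\int_0^1 |W-\bar W|^2\,dy\le\frac12\int_0^1 y(1-y)|\rd_y W|^2\,dy
\]
(Kang--Vasseur).
\end{itemize}
The degenerate weight $v^{\g-\a}$ and the condition \eqref{g-a-condition} appear when comparing the diffusion with $y(1-y)|\rd_y w|^2$ and when bounding the cubic remainders; this is expected to be the main obstacle. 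The non-truncated region $|p(v)-p(\vtil)|>\d_1$ is handled separately. There we use $\g\le 1+\frac{\a}{3}$ and $\g\le\frac54$ to control $Q(v|\vtil)$ and the large-value contributions of the bad terms by the diffusion $v^{\g-\a}|\rd_\x p|^2$, via the pointwise bound $|p(v)-p(\vtil)|\lesssim \sqrt{\int a\,\et}\,\bigl(\int v^{\g-\a}|\rd_\x(p-\tilde p)|^2\bigr)^{1/2}$-type interpolation, which holds in the one-dimensional setting.

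\textbf{Conclusion.} Integrating the resulting differential inequality in time gives \eqref{cont_main}. For \eqref{shift-control}, we take $f$ to be the dissipation density, whose $L^1$ norm is bounded by $\frac{\l}{\d_0\e}\int\et(U_0|\Util)$ from \eqref{cont_main}; combined with the bound on $|Y|$ from the shift step, this yields $|\dot X|\le\frac{C}{\e^2}(f+1)$.
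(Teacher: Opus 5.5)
The decisive gap is the shift, and with it the whole large-perturbation mechanism. A shift linear in the perturbation, $\dot X=-\frac{M}{\e^2}[\cdots]$, with a core inequality carrying $-\frac{\d_0}{\e}|\dot X|^2$, is the small-perturbation device, and it breaks down here for two reasons.

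\emph{The cross term.} The leftover $-\dot X\int a'\et(U^X|\Util)\,d\x$ in $\dot X\,Y$ is absorbed by the hyperbolic good terms only if $|\dot X|\lesssim\s_\e$, that is, only for small perturbations.

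\emph{The truncation errors.} These are not small either. An interpolation bound in terms of the total entropy $\int a\,\et$ produces errors like $\l\big(\int\et(U_0|\Util)\big)\Dcal_p$. It also gives no control of $\int_{\{p(v)-p(\vtil)>\d_1\}}a'\,d\x$.

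The paper instead uses the nonlinear shift \eqref{def-shift}, $\dot X=\Phi_\e(Y)\big(2|\Jcal^{bad}|+2|\Pcal_1|+2|\Pcal_2|+1\big)$, which creates a dichotomy:
\begin{itemize}
\item If $|Y|\ge\e^2$, then $\dot X\,Y\le-2\big(|\Jcal^{bad}|+|\Pcal_1|+|\Pcal_2|\big)$ removes every bad term at once.
\item If $|Y|\le\e^2$, one gains $-\e^{-4}|Y|^2$ and, crucially, the localized smallness $\int a'(h-\htil)^2d\x+\int a'Q(v|\vtil)d\x\le C\e^2/\l$ of Lemma \ref{lem:locE}.
\end{itemize}
This localized smallness is the idea you are missing, and it does three things:
\begin{itemize}
\item It yields a point $\x_0\in[-1/\e,1/\e]$ where the truncation is inactive.
\item It converts the pointwise bounds of Lemma \ref{lem-pw}, through Lemma \ref{lemma_pushing} (which costs a factor $1/\e$), into errors of size $\frac{\e}{\l}\Dcal_p+\frac{\e^4}{\l^2}\Gcal_p$.
\item It gives the truncation-independent bound $|\Ical_g^Y(\vbar)|\le C_2\e^2/\l$, which must be known before $\d_3$ is fixed.
\end{itemize}
The bound \eqref{shift-control} then follows from $|\dot X|\le\e^{-2}\big(2|\Jcal^{bad}|+2|\Pcal_1|+2|\Pcal_2|+1\big)$. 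On $\{|Y|\le\e^2\}$ one uses \eqref{bad-bound2}, and on $\{|Y|\ge\e^2\}$ the dissipated bad terms; it does not come from a bound on $|Y|$.

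The near-shock step also fails as you set it up. Expanding to second order in $w$ and invoking the linear weighted Poincar\'e inequality cannot close. Multiply by $2\a_\g\l^2/\e^3$ and set $W=\frac{\l}{\e}w$. The quadratic parts of $\Ical_1-\Gcal_p$ cancel (because $\s_*^2=-p'(v_-)$) and leave $\frac23\int_0^1W^3dy$. Under the same normalization, $\Ical_2$ becomes $\int_0^1W^2dy$ and $\Dcal_p$ becomes $\t_1\int_0^1y(1-y)|W_y|^2dy$. Since $W$ is only bounded in $L^2(0,1)$ and can be pointwise of size $\l\d_3/\e\gg1$, the cubic term is leading order, not a remainder. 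One therefore needs the nonlinear Poincar\'e inequality of Proposition \ref{prop_NPI}.

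This is also where $\t_1\ge0.9$ really enters, not in absorbing $h$-cross terms by Young's inequality. The reduced functional depends on $v$ alone, and $\Dcal_h$ cannot be used there because it is not monotone under truncation. So only the $\t_1$-part of the diffusion is available, while the layer width in \eqref{jacob} is set by $\t_1+\t_2=1$; Proposition \ref{prop_NPI} requires the coefficient $0.9$.

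Likewise, \eqref{g-a-condition} is not needed on the truncated region, where $v$ is bounded above and below. It is needed in the estimates of $\Pcal_2$ on the large-value region: $v^{4\g-4}\le CQ(v|\vtil)$ for large $v$ requires $\g\le\frac54$, and $2\g+\b-2\le1$ requires $\g\le1+\frac{\a}{3}$.
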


\begin{remark} \label{rmk:2shock}
It suffices to show Theorem \ref{thm_main3} for \(2\)-shocks, since the change of variables \(x\to -x\), \(u\to -u\), \(\s_\e\to -\s_\e\) implies the same result for \(1\)-shock.
Therefore, in what follows, we consider \(2\)-shocks \((\vtil,\htil)\), i.e., \(v_-<v_+,u_->u_+\) and \(\s_\e=\sqrt{-\frac{p(v_+)-p(v_-)}{v_+-v_-}}\).
\end{remark}

\(\bullet\) \textbf{Notation:} In what follows, \(C\) denotes a positive constant which may vary from line to line, but stays independent on \(\e\) (the strength of shock), \(\l\) (the total variation of the weight function \(a\)).

\section{Proof of Theorem \ref{thm_main3}}\label{sec:3}
\setcounter{equation}{0}
The proof of Theorem \ref{thm_main3} is based on the method of $a$-contraction with shifts (in short, $a$-contraction). 
The $a$-contraction method was first developed in \cite{KV-ARMA16,Vasseur-16} for the study of stability of Riemann shocks, and extended to the study for stability of large perturbations of viscous shock to viscous conservation laws as in \cite{EEK-BNSF,Kang-JMPA21,KO,KV-Poincare17,KV-JEMS21,KV-Inven21,KVW-CMP21} (see also \cite{BC-25,Kang-18} for the non-contraction property induced by the method).
We refer to \cite{CFK-2,CFK-1,CKV-ARMA22,GKV-JHDE23} for further applications of the a-contraction method to the stability analysis of inviscid shocks and small BV solutions to conservation laws.

In the present paper, we exploit this methodology to obtain the desired contraction property for the NSK system.
To this end, we first introduce several preliminaries and the method of $a$-contraction with shifts. Then, we present the main proposition (Proposition \ref{prop:main}) which is a key estimate to prove Theorem \ref{thm_main3}.

\subsection{Preliminaries for the method of $a$-contraction with shifts}

In this part, we collect technical estimates on the relative functionals and viscous-dispersive shock. Since the proof of these technical lemmas can be found in previous literature, we will omit the proofs and refer to the references for details.

\subsubsection{Global and local estimates on the relative quantities}
We first present useful inequalities on the relative quantities that will be crucially used in the later analysis. First of all, the following lemma provides global estimates on the relative functional \(Q(\cdot|\cdot)\) corresponding to the convex function \(Q(v)=\frac{v^{-\g+1}}{\g-1}\) for \(\g>1\) and \(Q(v)=-\log v\) for \(\g=1\) on \(\RR^+\).
\begin{lemma} [\cite{EEKO-ISO,KV-JEMS21}] \label{lem-pro}
For given constants \(\g\ge1\) and \(v_->0\), there exist constants \(c_1,c_2>0\) such that the following inequalities hold.
\begin{enumerate}
\item For any \(w\in(0,2v_-)\),
\begin{equation} \label{rel_Phi}
\begin{aligned}
Q(v|w) &\ge c_1\abs{v-w}^2 \quad 
&&\text{for all } 0<v\le 3v_-, \\
Q(v|w) &\ge c_2 \abs{v-w} \quad 
&&\text{for all } v\ge 3v_-.
\end{aligned}
\end{equation}
\item If \(0<w\le u \le v\) or \(0<v \le u \le w\), then
\begin{equation} \label{Phi-sim}
Q(v|w) \ge Q(u|w),
\end{equation} 
and for any \(\d_* \in (0,w/2)\), there exists a constant \(c_3>0\) such that if, in addition, \(w\in[\frac{1}{2}v_-,2v_-]\), \(\abs{w-v} > \d_*\), and \(\abs{w-u}\le\d_*\), then
\begin{equation} \label{rel_Phi-L}
Q(v|w)-Q(u|w) \ge c_3 \abs{v-u}.
\end{equation}
\end{enumerate}
\end{lemma}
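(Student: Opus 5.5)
The plan is to work with the explicit integral form of the Bregman remainder. For \(Q\) convex and smooth on \(\RR^+\) we have
\[
Q(v|w)=\int_w^v (v-s)\,Q''(s)\,ds,\qquad Q''(s)=\g s^{-\g-1}>0 ,
\]
and this single formula covers both \(\g>1\) and \(\g=1\). We also have \(\partial_v Q(v|w)=Q'(v)-Q'(w)\). This derivative has the sign of \(v-w\) because \(Q'\) is increasing. Consequently \(v\mapsto Q(v|w)\) is decreasing on \((0,w]\) and increasing on \([w,\infty)\), and it vanishes at \(v=w\).

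\textbf{Proof of \eqref{Phi-sim}.} This monotonicity gives \eqref{Phi-sim} at once. If \(w\le u\le v\), or \(v\le u\le w\), then \(u\) lies between \(w\) and \(v\), so \(Q(u|w)\le Q(v|w)\).

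\textbf{Proof of \eqref{rel_Phi-L}.} Write
\[
Q(v|w)-Q(u|w)=\int_u^v \big(Q'(s)-Q'(w)\big)\,ds .
\]
Take the case \(v>w+\d_*\), so that \(u\le w+\d_*\le v\) and the integrand is nonnegative on \([u,v]\). The integrand is at least \(Q'(w+\d_*)-Q'(w)=:m>0\) on \([w+\d_*,v]\). Since \(w\in[\frac12v_-,2v_-]\), the constant \(m\) is bounded below uniformly in \(w\). The piece \([u,w+\d_*]\) has length at most \(2\d_*\) and a nonnegative integrand. Since \(v-w-\d_*\ge v-u-2\d_*\), we get
\[
Q(v|w)-Q(u|w)\ge m\,(v-w-\d_*)\ge m\max\{0,\,v-u-2\d_*\}.
\]
For a clean linear bound I would instead use convexity. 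The function \(s\mapsto Q(s|w)\) is convex, its slope on \([w+\d_*,\infty)\) is at least \(m\), and \(Q(w+\d_*|w)\) is uniformly positive. This gives \(Q(v|w)\ge c\,(v-w)\) for all \(v\ge w+\d_*\). Combined with \(Q(u|w)\le Q(w+\d_*|w)\), a comparison of slopes yields \(Q(v|w)-Q(u|w)\ge c_3|v-u|\).

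The case \(v<w-\d_*\) is handled symmetrically. Here \(v\) stays above \(0\), and \(|Q'(s)-Q'(w)|\) only grows as \(s\downarrow 0\), which makes the lower bound easier.

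\textbf{Proof of \eqref{rel_Phi}.} For \(w\in(0,2v_-)\) and \(0<v\le3v_-\), both \(v\) and \(w\) lie in \((0,3v_-]\). On that set \(Q''(s)\ge\g(3v_-)^{-\g-1}\), so the integral formula gives
\[
Q(v|w)\ge \tfrac{\g}{2}(3v_-)^{-\g-1}|v-w|^2 .
\]
For \(v\ge3v_-\), again use convexity in \(v\). The map \(v\mapsto Q(v|w)\) is convex and increasing beyond \(w\), so
\[
Q(v|w)\ge \frac{Q(3v_-|w)}{3v_--w}\,(v-w).
\]
Since \(3v_--w\ge v_-\), the previous quadratic bound gives \(Q(3v_-|w)\ge c\,v_-^2\). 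Also \(3v_--w\le3v_-\). Hence the ratio is at least \(c\,v_-/3\), which yields \(c_2\) uniformly in \(w\in(0,2v_-)\).

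\textbf{Main obstacle.} The only delicate point is uniformity of the constants. This matters for \(w\) near \(0\) in the second part of \eqref{rel_Phi}, and near the endpoints in \eqref{rel_Phi-L}. The convexity and slope argument above handles both, because it only uses lower bounds on \(Q''\) over bounded sets and the positive gap \(3v_--w\ge v_-\).
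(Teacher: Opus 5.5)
Your proof is correct and uses the same kind of direct calculus/convexity argument as the sources the paper relies on. The paper gives no proof of its own, deferring to \cite[Lemma 2.4]{KV-JEMS21} for $\gamma>1$ and \cite[Lemma 3.1]{EEKO-ISO} for $\gamma=1$. Working with $Q''(s)=\gamma s^{-\gamma-1}$ has the small advantage of covering both cases at once.

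The one step you should write out is the linear bound in \eqref{rel_Phi-L}. The two facts you list do not suffice by themselves. Subtracting $Q(u|w)\le Q(w+\delta_*|w)$ from $Q(v|w)\ge c(v-w)$, even with the optimal $c=Q(w+\delta_*|w)/\delta_*$, only gives $c\,(v-w-\delta_*)$. This bound degenerates as $v\downarrow w+\delta_*$, while $v-u$ can stay of size $\delta_*$ (take $u=w$).

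What closes the argument is the three-slope inequality for the convex function $\phi=Q(\cdot|w)$ with $\phi(w)=0$. For $w\le u<v$ and $v\ge w+\delta_*$,
\[
\frac{Q(v|w)-Q(u|w)}{v-u}\;\ge\;\frac{Q(v|w)}{v-w}\;\ge\;\frac{Q(w+\delta_*|w)}{\delta_*}.
\]
Symmetrically, for $v<u\le w$ with $v<w-\delta_*$,
\[
\frac{Q(v|w)-Q(u|w)}{u-v}\;\ge\;\frac{Q(v|w)}{w-v}\;\ge\;\frac{Q(w-\delta_*|w)}{\delta_*},
\]
where $w-\delta_*\ge w/2\ge v_-/4$. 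Thus, for a fixed $\delta_*<v_-/4$, the constant $c_3=\delta_*^{-1}\inf_{w\in[v_-/2,\,2v_-]}\min\{Q(w+\delta_*|w),\,Q(w-\delta_*|w)\}>0$ works uniformly in $w$. This is presumably what you meant by ``a comparison of slopes''; once it is made explicit, the proof is complete.
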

\begin{proof}
We refer to \cite[Lemma 2.4]{KV-JEMS21} for the case \(\g>1\) and to \cite[Lemma 3.1]{EEKO-ISO} for \(\g=1\).
\end{proof}

The following lemma provides local estimates on the relative functionals.
\begin{lemma}[\cite{EEKO-ISO,KV-JEMS21}] \label{lem:local}
For given constants \(\g\ge1\) and \(v_->0\), there exist constants \(C>0\) and \(\d_*>0\) such that
for any \(\d \in (0,\d_*) \), the following is true.
\begin{enumerate}
\item For any \(v,w\in\RR^+\) satisfying \(\abs{p(v)-p(w)}<\d\) and \(\abs{p(w)-p(v_-)}<\d\),
\begin{equation} \label{Q-est-U}
Q(v|w) \le \bigg(\frac{p(w)^{-1/\g-1}}{2\g}+C\d\bigg) \abs{p(v)-p(w)}^2,
\end{equation}
\begin{equation} \label{Q-est-L}
Q(v|w) \ge \frac{p(w)^{-1/\g-1}}{2\g} \abs{p(v)-p(w)}^2
- \frac{1+\g}{3\g^2} p(w)^{-1/\g-2} (p(v)-p(w))^3,
\end{equation}
\begin{equation} \label{p-est1}
p(v|w) \le \left(\frac{\g+1}{2\g}\frac{1}{p(w)}+C\d\right) \abs{p(v)-p(w)}^2,
\end{equation}
where the relative pressure is defined as 
\[
p(v|w) =p(v)-p(w)-p'(w)(v-w).
\]
\item For any \(v,w\in\RR^+\) with \(\abs{p(w)-p(v_-)} \le \d\), if they satisfy either \(Q(v|w)<\d\) or \(\abs{p(v)-p(w)} < \d\), then
\begin{equation} \label{p-quad}
\abs{p(v)-p(w)}^2 \le C Q(v|w).
\end{equation}
\end{enumerate}
\end{lemma}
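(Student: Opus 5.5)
The plan is to reduce everything to one-variable calculus in the pressure variable, Taylor-expanding the convex functions $Q$ and $p$ around $w$. Write $p=p(v)$ and $q=p(w)$. Since $p(v)=v^{-\g}$ is a diffeomorphism of $\RR^+$ with $v=p^{-1/\g}$, set
\[
F(p):=Q\big(p^{-1/\g}\big)\quad\text{and}\quad G(p):=p^{-1/\g}.
\]
For $\g>1$ this gives $F(p)=\frac{p^{(\g-1)/\g}}{\g-1}$; for $\g=1$ it gives $F(p)=\log p$. Using $Q'(v)=-p(v)$, we get $Q(v|w)=F(p)-F(q)+q\,(G(p)-G(q))$. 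This is a smooth function $\Phi_q(p)$ of $(p,q)$ near $(p(v_-),p(v_-))$, and it satisfies $\Phi_q(q)=\Phi_q'(q)=0$.

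Differentiating, $\Phi_q'(p)=F'(p)+qG'(p)=(p-q)\frac{1}{\g}p^{-1/\g-1}$, because $F'(p)=-pG'(p)=\frac1\g p^{-1/\g}$. Two further derivatives give:
\begin{itemize}
\item $\Phi_q''(q)=\frac{1}{\g}q^{-1/\g-1}$;
\item $\Phi_q'''(p)=\frac{2}{\g}\big(-\tfrac1\g-1\big)p^{-1/\g-2}+(p-q)(\cdots)$, and at $p=q$ this equals $-\frac{2(1+\g)}{\g^2}q^{-1/\g-2}$.
\end{itemize}

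\textbf{Proof of \eqref{Q-est-U}.} Taylor's formula gives $\Phi_q(p)=\frac{q^{-1/\g-1}}{2\g}(p-q)^2+O(|p-q|^3)$. The constant in the $O$-term is uniform for $p,q$ in a fixed neighborhood of $p(v_-)$, which is guaranteed once $\d_*$ is small. Since $|p-q|<\d$, this yields \eqref{Q-est-U}.

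\textbf{Proof of \eqref{Q-est-L}.} Here I want an exact inequality, not just an $O(\d)$ error, so I use the integral form
\[
\Phi_q(p)=\int_q^p \tfrac{1}{\g}(s-q)\,s^{-1/\g-1}\,ds .
\]
The function $s\mapsto s^{-1/\g-1}$ is convex, so it lies above its tangent at $q$:
\[
s^{-1/\g-1}\ge q^{-1/\g-1}-\big(\tfrac1\g+1\big)q^{-1/\g-2}(s-q).
\]
The sign of $(s-q)\,ds$ is consistent on both sides of $q$: when $p<q$, both $s-q$ and $ds$ are negative. So we may integrate the tangent bound, which gives exactly
\[
\Phi_q(p)\ge \frac{q^{-1/\g-1}}{2\g}(p-q)^2-\frac{1+\g}{3\g^2}q^{-1/\g-2}(p-q)^3,
\]
and this is \eqref{Q-est-L}.

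\textbf{Proof of \eqref{p-est1}.} Write $p(v|w)=p-q-p'(w)(G(p)-G(q))$. Since $p'(w)=-\g w^{-\g-1}=-\g q^{(\g+1)/\g}$, and the first-order terms cancel, a second-order Taylor expansion in $p$ gives
\[
p(v|w)=-p'(w)\,\tfrac12 G''(q)(p-q)^2+O(|p-q|^3).
\]
Now $G''(q)=\frac1\g\big(\frac1\g+1\big)q^{-1/\g-2}$, so the coefficient is $\frac{\g+1}{2\g}q^{-1}$, which gives \eqref{p-est1}.

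\textbf{Proof of \eqref{p-quad}.} If $|p-q|<\d$, then \eqref{Q-est-L} gives $Q(v|w)\ge\big(\frac{q^{-1/\g-1}}{2\g}-C\d\big)|p-q|^2\ge \frac{1}{C}|p-q|^2$ for $\d_*$ small. If instead $Q(v|w)<\d$, I first show that $|p-q|$ is small. By convexity and monotonicity of $\Phi_q$ away from $q$ (its derivative has the sign of $p-q$), a bound $|p-q|\ge\d_1$ would force $Q(v|w)\ge \min\big(\Phi_q(q\pm\d_1)\big)\ge c\d_1^2$; the case $p\to0$, i.e. $v\to\infty$, is covered by the same monotonicity. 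Choosing $\d_*<c\d_1^2$, we conclude $|p-q|<\d_1$, and we are back in the first case with $\d_1$ in place of $\d$.

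The only delicate step is the exact cubic lower bound \eqref{Q-est-L}. The tangent-line convexity argument handles it without any error term, which is why I integrate the tangent bound rather than using a plain Taylor expansion there.
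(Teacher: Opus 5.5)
Your proof is correct. The paper omits the proof and cites \cite[Lemma 2.6]{KV-JEMS21} for $\gamma>1$ and \cite[Lemma 3.2]{EEKO-ISO} for $\gamma=1$. Like those references, you pass to the pressure variable and expand around $q=p(w)$, so for \eqref{Q-est-U}, \eqref{p-est1} and the small-amplitude half of \eqref{p-quad} your argument is the same.

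The one step where you take a genuinely different route is \eqref{Q-est-L}. A Taylor expansion in $p$ has to use $|p(v)-p(w)|<\delta$ to give the higher-order remainder a sign, which is presumably why the lemma states \eqref{Q-est-L} only in the small-amplitude regime. You instead write $\Phi_q(p)=\int_q^p (s-q)\,g(s)\,ds$ with $g(s)=\frac{1}{\gamma}s^{-1/\gamma-1}$, and integrate the tangent-line bound for the convex $g$ against the nonnegative oriented weight $(s-q)\,ds$. This gives the cubic lower bound exactly, with no error term, and for all $v,w>0$; the hypotheses are not needed at all for this estimate.

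A side benefit is that the bound $\Phi_q(q\pm\delta_1)\ge c\,\delta_1^2$, which your monotonicity argument in the second half of \eqref{p-quad} needs, follows at once. Working with $F(p)=Q(p^{-1/\gamma})$, where only $F'(p)=\frac{1}{\gamma}p^{-1/\gamma}$ enters, also lets you treat $\gamma=1$ and $\gamma>1$ together rather than in separate papers.
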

\begin{proof}
For the proof, we refer to \cite[Lemma 2.6]{KV-JEMS21} and \cite[Lemma 3.2]{EEKO-ISO}.
\end{proof}

\begin{remark} \label{rmk:rel}
	The locally quadratic structure of the relative functional guarantees that for any compact set \(K\subset\RR^+\), there exists a constant \(C>0\) such that
	\begin{equation*}
		C^{-1}\abs{v-w}^2 \le Q(v|w) \le C\abs{v-w}^2 \quad 
		\text{ for any } v,w \in K.
	\end{equation*}
	Note that the constant \(C\) depends on the compact set \(K\).
\end{remark}

\subsubsection{Properties of small shock waves}
We now introduce properties of viscous-dispersive shocks, under the assumption that the shock amplitude is sufficiently small.
In the sequel, due to translation invariance property, we assume \(\vtil(0)=\frac{v_-+v_+}{2}\) without loss of generality.

\begin{lemma} \label{lem-VS}
For a given constant state \(U_-\coloneqq(v_-,u_-)\in\RR^+\times\RR\), there exist constants \(\e_0,C,C_1,C_2>0\) such that the following holds. 

\begin{enumerate}
\item There exists a monotonic viscous-dispersive shock profile \(\Util\coloneqq(\vtil,\htil)\) satisfying \eqref{VS} with amplitude \(\e \coloneqq |p(v_-)-p(v_+)|<\e_0\) and \(\vtil(0)=\frac{v_-+v_+}{2}\).
\item It holds that
\begin{equation} \label{tail}
C^{-1} \e^2 e^{-C_1\e \abs{\x}} \le \vtil'(\x) \le C \e^2 e^{-C_2 \e \abs{\x}}, \quad \forall \x\in\RR.
\end{equation}
As a consequence,
\begin{equation} \label{lower-v}
\inf_{\x\in[-\frac{1}{\e},\frac{1}{\e}]} \vtil'(\x) \ge C\e^2.
\end{equation}
\item It also holds that \(|\vtil'(\x)| \sim |\htil'(\x)|\) for all \(\x\in\RR\).
In particular,
\begin{equation} \label{ratio-vh}
\begin{aligned}
|\s_* \vtil'(\x) + \htil'(\x)|+|\s_* \htil'(\x) - p(\vtil(\x))'| \le C \e \abs{\vtil'(\x)}, \quad \forall \x\in\RR,
\end{aligned}
\end{equation}
where \(p(\vtil)'=\frac{d}{d\x}p(\vtil)\) and
\begin{equation} \label{s*}
\s_* \coloneqq \sqrt{-p'(v_-)}
\end{equation}
which satisfies
\begin{equation} \label{sm1}
\abs{\s_\e - \s_*} \le C \e.
\end{equation}
\item Finally, 
\begin{equation} \label{2nd-der}
|\vtil''(\x)|+|\htil''(\x)| \le C \e |\vtil'(\x)|, \quad \forall \x\in\RR.
\end{equation}
\end{enumerate} 
\end{lemma}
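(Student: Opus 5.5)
The plan is to reduce \eqref{VS} to a single first-order ODE for $\vtil$ and then run a phase-line analysis in the weak-shock regime. Integrating both equations of \eqref{VS} over $(-\infty,\x)$ and using the end states gives
\begin{equation*}
-\s_\e(\vtil-v_-)-(\htil-u_-)=-\t_1\vtil^{\g-\a}p(\vtil)', \qquad -\s_\e(\htil-u_-)+p(\vtil)-p(v_-)=\t_2\g\vtil^{-\a-1}\htil'.
\end{equation*}
This is a planar autonomous system in $(\vtil,\htil)$, since $p(\vtil)'=p'(\vtil)\vtil'$. Its equilibria are exactly the two Rankine--Hugoniot states $(v_\pm,u_\pm)$. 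I will construct the profile as a heteroclinic orbit connecting them. Following Remark \ref{rmk:2shock}, I restrict to $2$-shocks, so $v_-<v_+$.

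The first step is to check the Lax condition at the linearized level. At a $2$-shock the endpoint $U_-$ should be a saddle and $U_+$ an unstable node (or the reverse, according to the orientation). I then reduce to a center manifold. Set $w=\htil+\s_\e\vtil$ and note that the combination $\s_*\vtil+\htil$ is nearly conserved. Writing the system as $\t_1 A(\vtil)\vtil'=\cdots$ and $\t_2 B(\vtil)\htil'=\cdots$, where $A,B$ are smooth and positive near $v_-$, the right-hand sides vanish on the Hugoniot locus. For $\e$ small, the standard weak-shock argument applies, as in Pego \cite{Pego-TAMS85} and the Navier--Stokes case \cite{KV-JEMS21}. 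The linearization at $(v_-,u_-)$ has one eigenvalue of size $O(\e)$, whose eigenvector is close to the acoustic direction $(1,-\s_*)$, and one eigenvalue of order one. A one-dimensional invariant (center-like) manifold $\htil=H(\vtil)$ therefore exists, is smooth in $\e$, and contains both equilibria. On this manifold the ODE takes the form
\begin{equation*}
\vtil'=F(\vtil)=k(\vtil)\,(\vtil-v_-)(v_+-\vtil)\,(1+O(\e)),
\end{equation*}
where $k$ is positive and bounded away from zero. This follows from the genuine nonlinearity of the second characteristic field, which reduces to $p''>0$. Taking $F$ quadratic with a positive coefficient gives a monotone heteroclinic orbit, and this proves (1). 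Translation fixes $\vtil(0)=\frac{v_-+v_+}{2}$. I must also check that the orbit actually lies on this slow manifold and that no other connection exists. I expect this to be the main obstacle, because the two viscosities $\t_1,\t_2$ differ and the nonlinear weights $\vtil^{\g-\a}$ and $\vtil^{-\a-1}$ enter; the positivity of the center direction needs $\t_1+\t_2=1>0$.

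Items (2)--(4) then follow from the scalar ODE. Since $v_+-v_-\sim\e$ and $F\sim(\vtil-v_-)(v_+-\vtil)$, the explicit logistic-type solution gives $\vtil'\le C\e^2e^{-C_2\e|\x|}$ together with the matching lower bound. This is \eqref{tail}, and \eqref{lower-v} follows from it. On the manifold, $\htil'=H'(\vtil)\vtil'$ with $H'=-\s_*+O(\e)$. Combining this with the second integrated equation, whose right-hand side is $\t_2\g\vtil^{-\a-1}\htil'=O(\e\,\vtil')$ because $\htil'=O(\e^2)$, yields \eqref{ratio-vh} and $|\vtil'|\sim|\htil'|$. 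The bound \eqref{sm1} is immediate from the formula for $\s_\e$ and a Taylor expansion of $p$. Finally, differentiating gives $\vtil''=F'(\vtil)\vtil'$ with $|F'|\le C\e$ on $[v_-,v_+]$, and likewise $\htil''=H''(\vtil)(\vtil')^2+H'(\vtil)\vtil''$. Together these give \eqref{2nd-der}.
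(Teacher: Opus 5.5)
Your argument is the standard weak-shock center-manifold construction, and it goes through. The paper gives no proof of its own here; it defers to \cite{HKKL-JDE25,EE-BNSF,EEKO-BNSF,FLP-NSK}. Two remarks make your version cleaner.

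First, the nonlinear weights are harmless. Since $\vtil^{\b}p(\vtil)'=-\g\vtil^{-\a-1}\vtil'$, both integrated equations carry the same factor $\g\vtil^{-\a-1}$. After the bi-Lipschitz change of variable $d\z=\g^{-1}\vtil^{\a+1}d\x$, the profile system reads $\mathrm{diag}(\t_1,\t_2)\frac{d\Util}{d\z}=A(\Util)-A(U_-)-\s_\e(\Util-U_-)$, with $A$ as in \eqref{VCL}. This is the constant-viscosity-matrix case. The sign that matters there is $\ell B r=\s_*(\t_1+\t_2)>0$, where $B=\mathrm{diag}(\t_1,\t_2)$, $r=(1,-\s_*)$ and $\ell=(\s_*,-1)$.

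Second, your ``main obstacle'' is not an obstacle. The lemma asserts only existence. You build the orbit by solving the reduced scalar ODE and setting $\htil=H(\vtil)$. Local invariance of the center manifold makes this a solution of the full system, since it stays $O(\e)$-close to $U_-$. On orientation: $U_-$ is a saddle whose unstable direction is the slow one, and $U_+$ is a stable node.

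Positivity of $k$ is the computation behind Lemma \ref{lem-Jacobian}. Combine the two integrated equations and use $\htil'=H'(\vtil)\vtil'$, $H'=-\s_\e+O(\e)$ and $\t_1+\t_2=1$ to get
\[
\g\vtil^{-\a-1}\vtil'\,\big(1+O(\e)\big)=-\frac{1}{\s_\e}\Big(\s_\e^2(\vtil-v_-)+p(\vtil)-p(v_-)\Big).
\]
By strict convexity of $p$, the right-hand side is positive on $(v_-,v_+)$ and comparable to $(\vtil-v_-)(v_+-\vtil)$.

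One step is wrong as written. The term $\t_2\g\vtil^{-\a-1}\htil'$ is not $O(\e\vtil')$: since $\htil'\approx-\s_*\vtil'$, it is comparable to $\vtil'$. Moreover, a uniform bound $|\htil'|\le C\e^2$ can never give a pointwise bound by $\e|\vtil'|$, because $\vtil'$ decays exponentially. What is true is that the derivative $\t_2\g(\vtil^{-\a-1}\htil')'$ is $O(\e\vtil')$, by \eqref{2nd-der} and $|\htil'|\le C\e^2$; perhaps that is what you meant.

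You do not need the second equation at all. Write $\s_*\vtil'+\htil'=(H'(\vtil)+\s_*)\vtil'$ and $\s_*\htil'-p(\vtil)'=\s_*(\htil'+\s_*\vtil')-(\s_*^2+p'(\vtil))\vtil'$. On $[v_-,v_+]$ you have $|H'(\vtil)+\s_*|+|p'(\vtil)+\s_*^2|\le C\e$. This gives \eqref{ratio-vh} and $|\vtil'|\sim|\htil'|$ directly.
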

\begin{proof}
The proof can be found in \cite{HKKL-JDE25} for the case where the coefficients \(\m\) and \(\k\) are positive constants.
We also refer to \cite{EE-BNSF,EEKO-BNSF} for a rigorous argument.
As the same argument carries over the non-constant coefficient case, we omit the details here and refer to \cite{FLP-NSK}.
\end{proof}

To address arbitrarily large perturbations, one may exploit the exponential decay of the derivatives of viscous-dispersive shocks.
The lemmas below furnish the required tools.

\begin{lemma}[{\cite[Lemma 4.2]{EEK-BNSF}}]\label{lemma_pushing}
Let \(f\colon \RR\to \RR\) be a smooth function with 
\(
\int_\RR \vtil'(\x) \abs{f(\x)} d\x <\infty.
\) Then, under the same assumption in Lemma \ref{lem-VS}, there is a constant \(C>0\) such that for any \(\x_0\in[-1/\e, 1/\e]\), 
\[
\abs{\int_\RR \vtil'(\x) \int_{\x_0}^{\x} f(\z) d\z d\x} 
\le \frac{C}{\e}\int_\RR \vtil'(\z) \abs{f(\z)} d\z.
\]
\end{lemma}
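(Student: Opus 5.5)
The plan is to use Fubini to swap the order of integration and then split the $\z$-integration region according to the sign of $\x-\x_0$. Write
\[
\int_\RR \vtil'(\x)\int_{\x_0}^{\x} f(\z)\,d\z\,d\x
=\int_{\x_0}^{\infty} f(\z)\Big(\int_{\z}^{\infty}\vtil'(\x)\,d\x\Big)d\z
-\int_{-\infty}^{\x_0} f(\z)\Big(\int_{-\infty}^{\z}\vtil'(\x)\,d\x\Big)d\z .
\]
This uses $\vtil'>0$ from Lemma \ref{lem-VS}. Fubini is justified once we show that the resulting integrals of $|f|$ are finite, which the bound below provides.

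The inner integrals are explicit: $\int_\z^\infty \vtil' = v_+-\vtil(\z)$ and $\int_{-\infty}^\z\vtil' = \vtil(\z)-v_-$. Hence it suffices to prove the pointwise bounds
\[
v_+-\vtil(\z)\le \frac{C}{\e}\vtil'(\z) \quad\text{for }\z\ge \x_0,
\qquad
\vtil(\z)-v_-\le \frac{C}{\e}\vtil'(\z) \quad\text{for }\z\le \x_0 .
\]
Summing the two pieces then gives exactly $\frac{C}{\e}\int_\RR\vtil'|f|$.

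To get these bounds I will use the tail estimate \eqref{tail}. For $\z\ge0$,
\[
v_+-\vtil(\z)\le \int_\z^\infty C\e^2e^{-C_2\e x}\,dx=\frac{C\e}{C_2}e^{-C_2\e\z}.
\]
This must be compared with $\vtil'(\z)\ge C^{-1}\e^2e^{-C_1\e\z}$. The main obstacle is that the rates $C_1$ and $C_2$ differ, so a direct comparison of the exponentials fails for large $\z$.

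I would address this in one of two ways. The first is to use the ODE \eqref{VS}: integrate it from $\pm\infty$ to obtain a first-order relation in which $\vtil'$ is comparable to $(v_+-\vtil)(\vtil-v_-)$ divided by a quantity of order $1$. Together with $v_+-v_-\sim\e$, this gives $\vtil'\gtrsim \e\,(v_+-\vtil)$ for $\z\ge0$ and $\vtil'\gtrsim\e(\vtil-v_-)$ for $\z\le0$. This relation is the standard one behind \eqref{tail} for small shocks, obtained via center-manifold or Lax-type reduction, and is presumably the fact proved in \cite{EEK-BNSF}.

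The remaining range is $\z\in[\x_0,0]$ with $\x_0\ge -1/\e$, and symmetrically $\z\in[0,\x_0]$. There I use $v_+-\vtil\le v_+-v_-\le C\e$ together with \eqref{lower-v}, $\vtil'\ge C\e^2$ on $[-1/\e,1/\e]$, which gives $v_+-\vtil\le \frac{C}{\e}\vtil'$. This is exactly where the restriction $\x_0\in[-1/\e,1/\e]$ enters. Combining the cases yields the lemma.
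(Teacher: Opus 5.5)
Your proposal is correct. The paper gives no proof of its own and only cites Lemma 4.2 of the earlier reference. Your reduction is the natural argument: Fubini, followed by the pointwise bounds $v_+-\vtil(\z)\le \frac{C}{\e}\vtil'(\z)$ for $\z\ge\x_0$ and $\vtil(\z)-v_-\le \frac{C}{\e}\vtil'(\z)$ for $\z\le\x_0$. You are also right that \eqref{tail} alone cannot give these bounds when $C_1\ne C_2$.

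The step you only sketch can be closed with tools already in the paper. Integrating \eqref{VS} gives a coupled $2\times 2$ system in $(\vtil,\htil)$, not a scalar relation, so you also need \eqref{ratio-vh}. Eliminate $\htil-h_\pm$ and replace $\htil'$ by $-\s_*\vtil'$, exactly as in the proof of Lemma~\ref{lem-Jacobian}. This yields
\[
\Big|\vtil^\b p(\vtil)'-\frac{1}{\s_\e}g(\vtil)\Big|\le C\e\,\vtil',\qquad g(v):=\s_\e^2(v-v_-)+p(v)-p(v_-).
\]
The Rankine--Hugoniot condition gives $g(v_-)=g(v_+)=0$, and $g''=p''$. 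Hence $|g(v)|=\tfrac12 p''(\theta)(v-v_-)(v_+-v)$ for $v\in[v_-,v_+]$ and some $\theta\in(v_-,v_+)$. Combined with $\vtil^\b|p'(\vtil)|\sim 1$, this gives $\vtil'\sim(\vtil-v_-)(v_+-\vtil)$ uniformly for small $\e$. No center-manifold argument or appeal to the cited paper is needed.

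With this relation your case split becomes unnecessary. For $\z\ge\x_0\ge-1/\e$, the lower bound in \eqref{tail} gives $\vtil(\z)-v_-\ge \vtil(-1/\e)-v_-\ge c\,\e$. Hence $v_+-\vtil(\z)\le C\vtil'(\z)/(\vtil(\z)-v_-)\le \frac{C}{\e}\vtil'(\z)$, and the case $\z\le\x_0$ is symmetric. Your use of \eqref{lower-v} on the middle range is an equally valid alternative.
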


\begin{lemma}[{\cite[Lemma 4.3]{EEK-BNSF}}]\label{lemma_Linfty}
Let \(f\colon \RR\to \RR\) be a smooth function with 
\(
\int_\RR \vtil'(\x) \abs{f(\x)} d\x <\infty.
\) Then, under the same assumption in Lemma \ref{lem-VS}, there is a constant \(C>0\) such that for any \(\x_0\in[-1/\e, 1/\e]\),
\[
\abs{\vt'(\x)\int_{\x_0}^{\x} f(\z)d\z} \le C\int_\RR \vt'(\z)\abs{f(\z)}d\z.
\]
\end{lemma}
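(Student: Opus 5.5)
We argue pointwise, comparing $\vtil'(\x)$ with $\vtil'(\z)$ for $\z$ between $\x_0$ and $\x$. By the symmetry $\x\mapsto-\x$ (which only exchanges the roles of the two tails in \eqref{tail}), it suffices to treat $\x\ge\x_0$. We start from
\[
\abs{\vtil'(\x)\int_{\x_0}^{\x} f(\z)\,d\z}\le \int_{\x_0}^{\x}\frac{\vtil'(\x)}{\vtil'(\z)}\,\vtil'(\z)\abs{f(\z)}\,d\z .
\]
So the lemma reduces to the uniform ratio bound
\[
\sup_{\x_0\le\z\le\x}\frac{\vtil'(\x)}{\vtil'(\z)}\le C,
\]
with $C$ independent of $\e$, $\x_0\in[-1/\e,1/\e]$ and $\x$.

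\emph{Case $\x\le 1/\e$.} Then $\x_0\le\z\le\x$ forces $\z,\x\in[-1/\e,1/\e]$. By \eqref{tail}, $\vtil'(\x)\le C\e^2$, and by \eqref{lower-v}, $\vtil'(\z)\ge C^{-1}\e^2$. Hence the ratio is bounded.

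\emph{Case $\x>1/\e$.} Split the $\z$-interval at $1/\e$.
\begin{itemize}
\item For $\z\in[\x_0,1/\e]$, again $\vtil'(\z)\ge C^{-1}\e^2$ by \eqref{lower-v} and $\vtil'(\x)\le C\e^2$, so the ratio is bounded.
\item For $\z\in[1/\e,\x]$ we need $\vtil'(\x)\le C\vtil'(\z)$. The crude bounds \eqref{tail} are not enough here, since $C_1$ may exceed $C_2$ and then $e^{C_1\e\z-C_2\e\x}$ is unbounded. So we need that $\vtil'$ is (essentially) nonincreasing on $[1/\e,\infty)$, and nondecreasing on $(-\infty,-1/\e]$ for the symmetric case.
\end{itemize}

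\emph{Main obstacle: tail monotonicity of $\vtil'$.} This is the step I expect to be hardest. I would obtain it from the small-amplitude structure of the profile underlying Lemma \ref{lem-VS}. Integrating \eqref{VS} once from $-\infty$ and eliminating $\htil$ gives a second-order ODE for $\vtil$. For $\e\ll1$, this ODE is a perturbation of the scaled Burgers/KdV-Burgers profile equation $\vtil'\approx k(\vtil-v_-)(v_+-\vtil)$ with $k>0$, whose solution has $\vtil'$ unimodal with maximum at $O(1)$ distance from $0$. Concretely, I would aim to prove a refined version of \eqref{2nd-der}:
\[
\vtil''(\x)=-\,\mathrm{sgn}(\x)\,\theta\,\e\,\vtil'(\x)\bigl(1+o(1)\bigr)\quad\text{for }\abs{\x}\ge 1/\e,\ \text{with some }\theta>0 .
\]
This follows by linearizing the profile ODE at the end states $v_\pm$, where the relevant eigenvalues are real and of size $\e$ (the monotonicity in Lemma \ref{lem-VS}(1) already rules out oscillatory tails). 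The refined identity gives $\vtil''\le0$ on $[1/\e,\infty)$, so $\vtil'(\x)\le\vtil'(\z)$ there.

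If this sharp sign information proves delicate, a weaker fallback suffices: show $\vtil'(\x)\le C\vtil'(\z)$ for $1/\e\le\z\le\x$ by a comparison argument with the explicit exponential solutions of the linearized tail equation.

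Combining both cases and integrating the ratio bound against $\vtil'(\z)\abs{f(\z)}$ over $\z\in[\x_0,\x]\subset\RR$ yields the claim.
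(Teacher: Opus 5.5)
\textbf{There is a genuine gap: the tail comparison is not proved.} Your reduction is sound. Since $\vtil'>0$, everything comes down to the ratio bound $\vtil'(\x)\le C\vtil'(\z)$ for $\z$ between $\x_0$ and $\x$. The central region is correctly handled by \eqref{tail} and \eqref{lower-v}, and you rightly note that \eqref{tail} alone cannot give the tail comparison when $C_1>C_2$. But that tail comparison is the entire content of the lemma, and your proposal does not prove it.

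First, the identity $\vtil''=-\mathrm{sgn}(\x)\,\theta\e\vtil'(1+o(1))$ on $\abs{\x}\ge1/\e$ is false as stated. Already for the Burgers-type profile $\vtil'=k(\vtil-v_-)(v_+-\vtil)$ one has $\vtil''/\vtil'=-2m\e\tanh(m\e\x)$ with $m\e=k(v_+-v_-)/2$. At $\abs{\x}=1/\e$ this coefficient is off from its limiting value by the fixed factor $\tanh(m)$, not by $o(1)$.

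Second, and more importantly, linearizing at the end states only describes the regime $\abs{\x}\gg1/\e$. Near $\abs{\x}\sim1/\e$ one still has $\abs{\vtil-v_\pm}$ comparable to the amplitude $\abs{v_+-v_-}\sim\e$, so nonlinear and linear terms are of the same order. Making the sign claim rigorous would need a slow-manifold reduction of the second-order profile system that is uniform in $\e$, which you do not supply.

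Third, the ``comparison with explicit exponentials'' fallback yields only two exponential bounds with different rates. That is exactly \eqref{tail}, which you yourself showed is insufficient.

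\textbf{How to close it.} The missing ingredient is much cheaper and already sits in the paper (which quotes this lemma from \cite{EEK-BNSF} without proof). Integrate \eqref{VS} from $-\infty$ and eliminate $\htil-u_-$, exactly as in the proof of Lemma \ref{lem-Jacobian}. This gives
\[
\s_\e\t_1\vtil^\b p(\vtil)'+\t_2\g\vtil^{-\a-1}\htil'=G(\vtil),\qquad G(v):=\s_\e^2(v-v_-)+p(v)-p(v_-).
\]
Since $\vtil^\b p(\vtil)'=-\g\vtil^{-\a-1}\vtil'$, \eqref{ratio-vh} and \eqref{sm1} turn the left-hand side into $-\g\s_*\vtil^{-\a-1}\vtil'(1+O(\e))$. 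On the other hand, $G$ is strictly convex with $G(v_\pm)=0$ by Rankine--Hugoniot. Hence $-G(v)=\tfrac12p''(w)(v-v_-)(v_+-v)$ for some $w$ between $v_-$ and $v_+$, and therefore
\[
C^{-1}(\vtil-v_-)(v_+-\vtil)\le\vtil'\le C(\vtil-v_-)(v_+-\vtil)\quad\text{on }\RR,
\]
with $C$ independent of $\e$. This is just $\vtil'\sim\e^2y(1-y)$ with $y$ as in \eqref{yandw}, i.e. the content of \eqref{jacob}.

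Now take $0\le\z\le\x$. Monotonicity and $\vtil(0)=\frac{v_-+v_+}{2}$ give $\vtil(\x)-v_-\le2(\vtil(\z)-v_-)$ and $v_+-\vtil(\x)\le v_+-\vtil(\z)$, so $\vtil'(\x)\le C\vtil'(\z)$. The case $\x\le\z\le0$ is symmetric.

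To finish, for $\x\ge\x_0$, every $\z\in[\x_0,\x]$ either satisfies $\z\ge0$, where this comparison applies, or lies in $[-1/\e,0)$, where \eqref{tail} and \eqref{lower-v} apply. The case $\x\le\x_0$ is analogous. This closes your argument without any information on the sign of $\vtil''$.
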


\subsection{Relative entropy method}
Our approach relies on the relative entropy method, a nonlinear energy method which allows us to handle large perturbations.
This method was originally introduced by Dafermos \cite{Dafermos-ARMA79} and DiPerna \cite{Diperna-Indiana79} to prove \(L^2\)-stability and uniqueness of Lipschitz solutions to the hyperbolic conservation laws endowed with a convex entropy.

To apply the relative entropy method, we first rewrite \eqref{main} into the form of general viscous hyperbolic system of conservation laws:
\begin{equation} \label{VCL}
\rd_t U + \rd_x A(U) =
\begin{pmatrix}
- \t_1 \rd_x (v^\b \rd_x(p(v))) \\
\t_2\g \rd_x (v^{-\a-1} \rd_x h)
\end{pmatrix},
\end{equation}
where \(\b\coloneqq\g-\a\) and
\[
U\coloneqq
\begin{pmatrix}
v \\ h
\end{pmatrix}, \quad
A(U)\coloneqq
\begin{pmatrix}
- h \\ p(v)
\end{pmatrix}.
\]
It is known that the system \eqref{VCL} admits a convex entropy:
\begin{align*}
\et(U) \coloneqq
\begin{cases}
\frac{h^2}{2}-\log v, \quad \text{ for } \g=1,  \\  
\frac{h^2}{2}+Q(v), \quad \text{ for } \g>1,
\end{cases}
\end{align*}
where \(Q(v)=\frac{v^{-\g+1}}{\g-1}\), i.e., \(Q'(v)=-p(v)\).
The relative entropy for \eqref{VCL} is given by
\[
\et(U|\Util) = \et(U) - \et(\Util) - D\et(\Util)(U-\Util) = \frac{1}{2}(h-\htil)^2 + Q(v|\vtil).
\]
Throughout the proof, we consider a weighted relative entropy, which measures a solution to \eqref{VCL} from the shock \(\Util\) up to a shift \(X\) (which will be determined later):
\[
a(\x) \et(U(t,\x+X(t))|\Util(\x)).
\]
For simplicity, we introduce the following notations: for any function \(f\colon \RR^+\times\RR\to\RR\) and the shift function \(X(t)\),
\[
f^{\pm X}(t,\x) \coloneqq f(t,\x\pm X(t)).
\]
In the following lemma, we extract a quadratic structure from the evolution of the weighted relative entropy.

\begin{lemma} \label{lem-rel}
Let \(a\) be any positive smooth bounded function whose derivative is bounded and integrable.
Let \(U\in\Hcal_T\) be a solution to \eqref{VCL}, and \(X\) be any absolutely continuous function on \([0,T]\).
Then, we have
\begin{equation} \label{ineq-0}
\begin{aligned}
&\frac{d}{dt} \int_\RR a(\x) \et(U^X(t,\x)|\Util(\x))d\x \\
&\qquad
=\dot{X}(t)Y(U^X) + \Jcal^{bad}(U^X) + \Pcal_1(U^X) + \Pcal_2(U^X) - \Jcal^{good}(U^X),
\end{aligned}
\end{equation}
where
\begin{equation}
\begin{aligned}\label{ybg-first}
Y(U) &\coloneqq -\int_\RR a'\et(U|\Util)d\x + \int_\RR a \rd_\x \nabla \et(\Util)(U-\Util) d\x, \\
\Jcal^{bad}(U) &\coloneqq \int_\RR a' (p(v)-p(\vtil))(h-\htil) d\x
+ \s_\e \int_\RR a \vtil' p(v|\vtil) d\x, \\
\Pcal_1(U) &\coloneqq 
- \t_1\int_\RR a' v^\b (p(v)-p(\vtil)) \rd_\x(p(v)-p(\vtil)) d\x \\
&\qquad
- \t_1\int_\RR a'(p(v)-p(\vtil)) (v^\b-\vtil^\b) \rd_\x p(\vtil) d\x \\
&\qquad
- \t_1\int_\RR a \rd_\x(p(v)-p(\vtil)) (v^\b-\vtil^\b) \rd_\x p(\vtil) d\x, \\
\Pcal_2(U) &\coloneqq
-\t_2\g\int_\RR a' \frac{1}{v^{\a+1}} (h-\htil) \rd_\x(h-\htil) d\x \\
&\qquad
+\t_2\g\int_\RR a(h-\htil)\Big(\frac{1}{v^{\a+1}}-\frac{1}{\vtil^{\a+1}}\Big) \htil'' d\x \\
&\qquad
+\t_2\g\int_\RR a (h-\htil) \rd_\x\Big(\frac{1}{v^{\a+1}}-\frac{1}{\vtil^{\a+1}}\Big) \htil' d\x,\\
\Jcal^{good}(U) &\coloneqq \s_\e\int_\RR a' Q(v|\vtil) d\x
+ \frac{\s_\e}{2}\int_\RR a' (h-\htil)^2 d\x\\
&\qquad
+ \t_1\int_\RR a v^\b |\rd_\x (p(v)-p(\vtil))|^2 d\x
+ \t_2\g\int_\RR a \frac{1}{v^{\a+1}}|\rd_\x (h-\htil)|^2 d\x.
\end{aligned}
\end{equation}
\end{lemma}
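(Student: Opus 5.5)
We compute $\frac{d}{dt}\int_\RR a\,\et(U^X|\Util)\,d\x$ directly, using the equation for $U^X$ and the profile equation \eqref{VS}, and then regroup terms. Since $U^X(t,\x)=U(t,\x+X(t))$, we have $\rd_t U^X = (\rd_t U)^X + \dot X\,\rd_\x U^X$. Writing $\et(U|\Util)=\et(U)-\et(\Util)-D\et(\Util)(U-\Util)$ with $D\et(\Util)=(-p(\vtil),\htil)$, we get
\[
\frac{d}{dt}\int a\,\et(U^X|\Util)=\int a\big(D\et(U^X)-D\et(\Util)\big)\cdot \rd_t U^X .
\]
The $\dot X$ part is $\dot X\int a\,(D\et(U^X)-D\et(\Util))\cdot\rd_\x U^X$. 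We rewrite it as $\dot X\int a\,\rd_\x\big(\et(U^X|\Util)\big)+\dot X\int a\,\rd_\x(D\et(\Util))(U^X-\Util)$, which follows from differentiating the definition of the relative entropy. Integrating by parts in the first piece then gives exactly $\dot X\,Y(U^X)$.

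For the remaining part we use \eqref{main} in moving coordinates, with $U^X$ in place of $U$. We subtract the profile equation, which holds since $\rd_t\Util=0$, to obtain
\[
\rd_t U^X = \s_\e\rd_\x(U^X-\Util) - \rd_\x\big(A(U^X)-A(\Util)\big) + \rd_\x\big(\text{viscous}(U^X)-\text{viscous}(\Util)\big).
\]
Here the key point is that $D\et(U)-D\et(\Util)=(-(p(v)-p(\vtil)),\,h-\htil)$, and the identity holds after subtracting $D\et(\Util)\cdot(\text{profile equation}=0)$.

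The three contributions are treated as follows.

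\emph{Transport term.} The term $\s_\e\int a\,(D\et(U)-D\et(\Util))\cdot\rd_\x(U-\Util)$ becomes $\s_\e\int a\,\rd_\x\et(U|\Util)-\s_\e\int a\,\rd_\x D\et(\Util)(U-\Util)$. After integration by parts this gives $-\s_\e\int a'\et(U|\Util)$ and a term involving $\s_\e\int a\,\vtil' p'(\vtil)(v-\vtil)$ plus an $h$-part.

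\emph{Hyperbolic flux term.} For $-\int a\,(D\et(U)-D\et(\Util))\cdot\rd_\x(A(U)-A(\Util))$, the standard relative entropy-flux computation gives
\[
\int a'(p(v)-p(\vtil))(h-\htil) \;+\; \text{terms with }\vtil',\htil'.
\]
Here $-(p-\tilde p)\rd_\x(-(h-\htil))+(h-\htil)\rd_\x(p-\tilde p)=\rd_\x\big((p-\tilde p)(h-\htil)\big)$, which integrates by parts onto $a'$. Combining with the transport pieces, the cross terms in $\htil'$ and $\vtil'$ collapse via the first-order part of \eqref{VS}. What survives is $\s_\e\int a\,\vtil'\,p(v|\vtil)$, the relative pressure term. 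The sign conventions must be tracked carefully, since $Q'=-p$. This yields the two terms of $\Jcal^{bad}$ and the first two terms of $\Jcal^{good}$, with the sign of $\s_\e a'$ kept as written.

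\emph{Viscous terms.} For the $v$-equation,
\[
\int a\,(p(v)-p(\vtil))\,\t_1\rd_\x\big(v^\b\rd_\x p(v)-\vtil^\b\rd_\x p(\vtil)\big).
\]
We split $v^\b\rd_\x p(v)-\vtil^\b\rd_\x p(\vtil)=v^\b\rd_\x(p(v)-p(\vtil))+(v^\b-\vtil^\b)\rd_\x p(\vtil)$ and integrate by parts. This produces $-\t_1\int a v^\b|\rd_\x(p-\tilde p)|^2$, which is the good term, together with the three $\Pcal_1$ terms, the $a'$ terms arising from the derivative falling on $a$. For the $h$-equation we split
\[
v^{-\a-1}h_\x-\vtil^{-\a-1}\htil'=v^{-\a-1}\rd_\x(h-\htil)+(v^{-\a-1}-\vtil^{-\a-1})\htil'.
\]
Integrating the first piece by parts gives the good dissipation term and the first $\Pcal_2$ term. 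For the second piece we do not integrate by parts. Instead we expand $\rd_\x\big[(v^{-\a-1}-\vtil^{-\a-1})\htil'\big]$ by the product rule, which gives the remaining two $\Pcal_2$ terms.

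The main obstacle is the bookkeeping in the hyperbolic and transport step. There one must verify that all linear-in-perturbation terms weighted by $a\vtil'$ and $a\htil'$ cancel using \eqref{VS}, including its viscous parts, leaving only $\s_\e\int a\vtil' p(v|\vtil)$. We handle this by applying $D\et(\Util)$ against the full profile equation, so the viscous profile terms are absorbed consistently into the $\Pcal$ splittings above.

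Boundary terms vanish because $U\in\Hcal_T$ and $a,a'$ are bounded. Differentiation under the integral is justified by this regularity, through a standard density argument.
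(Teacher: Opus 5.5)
Your treatment of the $\dot X$ term and of both viscous terms is correct, and it is exactly the paper's computation. The paper writes out only the $h$-dissipation splitting, as you do, and cites \cite{KV-Inven21,EEKO-ISO} for the rest. The gap is in the transport/flux step, which is where $\Jcal^{bad}$ and the first two terms of $\Jcal^{good}$ come from. The identity you state,
\[
(D\et(U)-D\et(\Util))\cdot\rd_\x(U-\Util)=\rd_\x\et(U|\Util)-\rd_\x D\et(\Util)\,(U-\Util),
\]
is false. The correct expansion is $\rd_\x\et(U|\Util)+\rd_\x D\et(\Util)(U-\Util)-(D\et(U)-D\et(\Util))\cdot\Util'$. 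The sign is $+$, as in your own $\dot X$ computation, and differentiating $\Util$ contributes an extra term. With your version the transport term leaves $\s_\e\int a\big(p'(\vtil)\vtil'(v-\vtil)-\htil'(h-\htil)\big)d\x$, which is linear in the perturbation. No manipulation with \eqref{VS}, an identity involving only the profile, can turn this into the quadratic term $\s_\e\int a\vtil'p(v|\vtil)d\x$. Moreover, once you have subtracted the profile equation there is nothing left to cancel. The flux contribution is exactly $-\int a\,\rd_\x\big((p(v)-p(\vtil))(h-\htil)\big)d\x=\int a'(p(v)-p(\vtil))(h-\htil)d\x$, with no extra $\vtil',\htil'$ terms, as your own displayed identity shows. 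Using \eqref{VS} a second time would inject profile terms such as $\t_1(\vtil^\b p(\vtil)')'$ that do not appear in $\Pcal_1,\Pcal_2$. So the step you call the main obstacle is asserted rather than derived, and as written it does not give the claimed formula.

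The missing ingredient is the pointwise identity
\[
-(p(v)-p(\vtil))\,\rd_\x(v-\vtil)=\rd_\x Q(v|\vtil)+\vtil'\,p(v|\vtil),
\]
obtained by differentiating $Q(v|\vtil)$ and using $Q'=-p$. Together with $(h-\htil)\rd_\x(h-\htil)=\rd_\x\frac12(h-\htil)^2$, it gives $(D\et(U)-D\et(\Util))\cdot\rd_\x(U-\Util)=\rd_\x\et(U|\Util)+\vtil'p(v|\vtil)$. The transport term therefore yields $-\s_\e\int a'\et(U|\Util)d\x+\s_\e\int a\vtil'p(v|\vtil)d\x$ directly, and the lemma follows with no further use of \eqref{VS}.

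The cancellation mechanism you describe belongs to the other standard route, in which one does not subtract \eqref{VS}. There the relative flux $A(U|\Util)=(0,p(v|\vtil))$ produces $-\int a\htil'p(v|\vtil)d\x$, which is rewritten with \eqref{VS}$_1$. The linear and viscous-profile leftovers then recombine with the viscous terms of $U$ into the difference form of $\Pcal_1,\Pcal_2$. Mixing the two routes is what broke your bookkeeping.
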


\begin{proof}
The system \eqref{VCL} and the barotropic NS system in Lagrangian mass coordinates (with effective velocity) share similar structure, except for additional dissipation in the velocity equation.
We therefore concentrate on the part related to velocity dissipation, and omit the remaining details and refer to \cite[Lemma 4.2]{KV-Inven21} and \cite[Lemma 3.5]{EEKO-ISO}. The velocity dissipation term yields the following terms
\begin{align*}
&\t_2\g\int_\RR a (h-\htil)\Big(\frac{1}{v^{\a+1}}h_\x-\frac{1}{\vtil^{\a+1}}\htil_\x\Big)_\x d\x
=-\t_2\g\int_\RR \big(a (h-\htil)\big)_\x \Big(\frac{1}{v^{\a+1}}h_\x-\frac{1}{\vtil^{\a+1}}\htil_\x\Big) d\x\\
&=-\t_2\g\int_\RR \big(a (h-\htil)\big)_\x \frac{1}{v^{\a+1}}(h-\htil)_\x d\x
+\t_2\g\int_\RR a (h-\htil) \Big(\Big(\frac{1}{v^{\a+1}}-\frac{1}{\vtil^{\a+1}}\Big) \htil'\Big)_\x d\x,
\end{align*}
which gives \(\Pcal_2\) and the third term of \(\Jcal^{good}\).
\end{proof}
\begin{remark} 
	The weight function \(a\) will be defined so that \(\s_\e a'>0\).
	Hence, \(-\Jcal^{good}\) consists of the terms with negative signs, while \(\Jcal^{bad},\Pcal_1\) and \(\Pcal_2\) consist of the terms with indefinite signs.
\end{remark}

\subsection{Construction of the weight function and shift}
We now define the weight function \(a\) as
\begin{equation} \label{a-def}
a(\x) \coloneqq 1 - \frac{\l}{\e} (p(\vtil(\x))-p(v_-)),
\end{equation}
whose total variation is \(\l>0\).
Thanks to the properties of shocks, the weight \(a\) also possesses useful properties. More precisely, \(a\) increases from \(1\) to \(1+\l\) on \(\RR\), and
\begin{equation} \label{totvar-ai}
\int_\RR |a'(\x)| d\x = \l.
\end{equation}
Moreover, since \(|p'(v_+)| \le |p'(\vtil)| \le |p'(v_-)|\) and
\begin{equation} \label{der-a}
a'(\x) = - \frac{\l}{\e}p'(\vtil(\x)) \vtil'(\x)>0,
\end{equation}
it follows from \eqref{tail} and \eqref{ratio-vh} that
\begin{equation} \label{der-scale}
|a'|
\sim \frac{\l}{\e}|\vtil'|
\sim \frac{\l}{\e}|\htil'|, \qquad
|a'(\x)| \le C \e \l e^{-C\e \abs{\x}} \quad \text{for all } \x\in\RR.
\end{equation}

Next, we define the shift function \(X\) so that the right-hand side of \eqref{ineq-0} is negative. For each \(\e>0\), we first consider a continuous function \(\Phi_\e\) given by
\begin{equation} \label{Phi}
	\Phi_\e(y)=
	\begin{cases}
		\frac{1}{\e^2}, &\text{if } y\le-\e^2,\\
		-\frac{1}{\e^4}y, &\text{if } \abs{y}\le\e^2,\\
		-\frac{1}{\e^2}, &\text{if } y\ge\e^2.
	\end{cases}
\end{equation}
Then, we define a shift \(X\) as a solution to the following nonlinear ODE:
\begin{equation} \label{def-shift}
	\begin{cases}
		\dot{X}(t) = \Phi_\e(Y(U))\big(2|\Jcal^{bad}(U)|+2\abs{\Pcal_1(U)}+2\abs{\Pcal_2(U)}+1\big), \\
		X(0)=0,
	\end{cases}
\end{equation}
where \(Y,\Jcal^{bad},\Pcal_1\) and \(\Pcal_2\) are as in \eqref{ybg-first}.
Notice that for any solution \(U\in\Hcal_T\), this ODE admits a unique absolutely continuous solution on \([0,T]\).
We refer to \cite[Lemma A.1]{CKKV-M320} and \cite[Appendix C]{KV-JDE22} for detailed accounts of the existence and uniqueness theory of the ODE.

Then, this choice of the shift $X$, together with \eqref{ineq-0}, yields that
\begin{align}
\begin{aligned}\label{rel_ent_est}
	&\frac{d}{dt} \int_\RR a(\x) \et(U^X(t,\x)|\Util(\x))d\x \\
	&\qquad
	=\dot{X}(t)Y(U^X) + \Jcal^{bad}(U^X) + \Pcal_1(U^X) + \Pcal_2(U^X) - \Jcal^{good}(U^X)\\
	&\qquad=\Phi_\e(Y(U))\big(2|\Jcal^{bad}(U)|+2\abs{\Pcal_1(U)}+2\abs{\Pcal_2(U)}+1\big)Y(U^X) \\
	&\quad\qquad+ \Jcal^{bad}(U^X) + \Pcal_1(U^X) + \Pcal_2(U^X) - \Jcal^{good}(U^X)=:\Fcal(U).
\end{aligned}
\end{align}
Our goal is to show that $\Fcal(U)\le 0$, which yields the contraction of the weighted relative entropy. Thanks to \eqref{Phi}, we have
\begin{equation} \label{XdotY}
	\begin{aligned}
		&\Phi_\e(Y(U))\big(2|\Jcal^{bad}(U)|+2\abs{\Pcal_1(U)}+2\abs{\Pcal_2(U)}+1\big)Y(U)\\
		&\qquad \le
		\begin{cases}
			-2|\Jcal^{bad}(U)|-2\abs{\Pcal_1(U)}-2\abs{\Pcal_2(U)}, &\text{if } \abs{Y(U)} \ge \e^2,\\
			-\frac{1}{\e^4}\abs{Y(U)}^2, &\text{if } \abs{Y(U)} \le \e^2.
		\end{cases}
	\end{aligned}
\end{equation}
Then, for any \(U\in\Hcal_T\) satisfying \(|Y(U)|\ge\e^2\), we have 
\begin{equation}\label{FU-0}
	\Fcal(U)
	\le -|\Jcal^{bad}(U)|-\abs{\Pcal_1(U)}-\abs{\Pcal_2(U)} - \Jcal^{good}(U) \le 0.
\end{equation}
On the other hand, when \(|Y(U)|\le\e^2\), we use \eqref{Phi} to obtain
\begin{equation}\label{FU}
	\Fcal(U)\le -\frac{1}{\e^4}\abs{Y(U)}^2 + \Jcal^{bad}(U^X) + \Pcal_1(U^X) + \Pcal_2(U^X) - \Jcal^{good}(U^X).
\end{equation}
Thus, it suffice to show that the right-hand side of \eqref{FU} is negative, whenever $|Y(U)|\le \e^2$.

\subsection{Maximization in terms of \(h-\htil\)}
In order to show that the right-hand side of \eqref{FU} is negative for $|Y(U)|\le \e^2$, we need a sharp estimate for \(p(v)-p(\vtil)\) near \(p(\vtil)\).
To this end, it is necessary to express \(\Jcal^{bad}\) on the right-hand side of \eqref{FU} in terms of \(p(v)\) near \(p(\vtil)\), by separating \(h-\htil\) from the first term of \(\Jcal^{bad}\).
Therefore, we will rewrite \(\Jcal^{bad}\) into maximized representation with respect to \(h-\htil\) as in the following lemma.
Nonetheless, we will retain all terms of \(\Jcal^{bad}\) within the region \(\{p(v)-p(\vtil)>\d\}\) for small values of \(v\), since we use the estimate \eqref{bo1m} to control the first term of \(\Jcal^{bad}\) in that region.
The values of \(\d\) will be determined later.

\begin{lemma} \label{lem-max}
Let \(a\colon\RR\to\RR^+\) be the weight function in \eqref{a-def}
and \(\Util=(\vtil,\htil)\) be the viscous-dispersive shock in \eqref{VS}.
Let \(\d>0\) be any constant.
Then, for any \(U\in\Hcal_T\),
\begin{equation} \label{ineq-1}
\Jcal^{bad}(U) - \Jcal^{good}(U) = \Bcal_{\d}(U) - \Gcal_{\d}(U),
\end{equation}
where\begin{equation}
\begin{aligned}\label{badgood}
\Bcal_\d(U) &\coloneqq \frac{1}{2\s_\e}\int_\RR a'(p(v)-p(\vtil))^2\one{p(v)\le p(\vtil)+\d} d\x
+\s_\e \int_\RR a \vtil' p(v|\vtil) d\x \\
&\qquad
+\int_\RR a'(p(v)-p(\vtil))(h-\htil)\one{p(v)>p(\vtil)+\d} d\x \\
\Gcal_\d(U) &\coloneqq \frac{\s_\e}{2}\int_\RR a'\Big(h-\htil-\frac{p(v)-p(\vtil)}{\s_\e}\Big)^2 \one{p(v)\le p(\vtil)+\d} d\x \\
&\qquad
+\frac{\s_\e}{2}\int_\RR a'(h-\htil)^2 \one{p(v)>p(\vtil)+\d} d\x
+\s_\e\int_\RR a' Q(v|\vtil) d\x\\
&\qquad+\t_1\int_\RR a v^\b |\rd_\x (p(v)-p(\vtil))|^2 d\x
+\t_2\g\int_\RR a \frac{1}{v^{\a+1}} |\rd_\x (h-\htil)|^2 d\x.
\end{aligned}
\end{equation}
\end{lemma}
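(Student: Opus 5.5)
This is a pointwise algebraic identity, so the plan is to split the domain of integration by the indicator $\one{p(v)\le p(\vtil)+\d}$ and its complement, and then complete the square in $h-\htil$ on the first region. First write out $\Jcal^{bad}-\Jcal^{good}$ from \eqref{ybg-first}. The terms $\s_\e\int a\vtil' p(v|\vtil)$, $-\s_\e\int a'Q(v|\vtil)$, and the two diffusion terms with coefficients $\t_1$ and $\t_2\g$ appear unchanged on both sides of \eqref{ineq-1}. They therefore cancel immediately, so it remains only to treat
\[
\int_\RR a'(p(v)-p(\vtil))(h-\htil)\,d\x-\frac{\s_\e}{2}\int_\RR a'(h-\htil)^2\,d\x .
\]

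Next, split both integrands using $1=\one{p(v)\le p(\vtil)+\d}+\one{p(v)>p(\vtil)+\d}$. On the set $\{p(v)>p(\vtil)+\d\}$, keep the cross term as it is. It becomes the third term of $\Bcal_\d$, and the quadratic term becomes the second term of $\Gcal_\d$ with its minus sign.

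On the set $\{p(v)\le p(\vtil)+\d\}$, write $A:=p(v)-p(\vtil)$ and $B:=h-\htil$. Then use the elementary identity, valid since $\s_\e>0$ for the $2$-shock by Remark \ref{rmk:2shock}:
\[
AB-\frac{\s_\e}{2}B^2=\frac{1}{2\s_\e}A^2-\frac{\s_\e}{2}\Big(B-\frac{A}{\s_\e}\Big)^2 .
\]
To verify it, expand $\frac{\s_\e}{2}(B-A/\s_\e)^2=\frac{\s_\e}{2}B^2-AB+\frac{A^2}{2\s_\e}$. Multiplying by $a'$ and integrating over this region produces the first term of $\Bcal_\d$ and the first term of $\Gcal_\d$.

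Summing the two regions with the cancelled common terms gives \eqref{ineq-1} exactly. The only points needing care are integrability of each piece, so that the splitting is legitimate. This holds because $a'$ is bounded and integrable by \eqref{der-scale}, and $U\in\Hcal_T$ gives $h-\htil$ and $p(v)-p(\vtil)$ in $L^2$; $v^{-1}$ is bounded, so $p(v)$ is bounded. Beyond that, there is no real obstacle: the lemma is a bookkeeping identity, and the sign conventions ($\s_\e a'>0$) are only relevant later.
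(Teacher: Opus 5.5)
Your proposal is correct and matches the paper's approach: the paper omits the proof and defers to Lemma 4.3 of Kang--Vasseur, which is exactly the argument you give. That argument cancels the common terms, splits the domain by $\one{p(v)\le p(\vtil)+\d}$, and completes the square in $h-\htil$ on that set. One small remark: the completing-the-square identity only requires $\s_\e\neq 0$, not $\s_\e>0$.
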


\begin{proof}
The proof is essentially the same as \cite[Lemma 4.3]{KV-Inven21}.
Thus, we omit it.
\end{proof}
\begin{remark} \label{rmk:good}
	Since \(\s_\e a' > 0\) and \(a>0\), \(-\Gcal_\d\) still consists of good terms.
\end{remark}

\subsection{Main proposition}
We now state the main proposition for the proof of Theorem \ref{thm_main3} and completes the proof of Theorem \ref{thm_main} as an immediate consequence. The following proposition is the key estimate for the proof of Theorem \ref{thm_main3}. 
\begin{prop} \label{prop:main}
For a given constant state \(U_-=(v_-,u_-)\in\RR^+\times\RR\), there exist constants \(\e_0,\d_0,\d_3\in(0,1/2)\) such that for any \(\e\in(0,\e_0)\) and any \(\l\in(\d_0^{-1}\e,\d_0)\), the following holds.\\
For all \(U\in\Hcal_T\) which satisfies \(\abs{Y(U)}\le\e^2\),
Then, it holds that
\begin{equation} \label{contraction}
\begin{aligned}
&\Rcal(U) \coloneqq 
-\frac{1}{\e^4}\abs{Y(U)}^2 + \Bcal_{\d_3}(U) + \Pcal_1(U) + \Pcal_2(U)\\
&\qquad\qquad
-\Gcal_h^-(U) -\Gcal_h^+(U) -\Big(1-\d_0\frac{\e}{\l}\Big)\Gcal_p(U)
-\frac{1}{2}\Dcal_h(U) -(1-\d_0)\Dcal_p(U) \le 0,
\end{aligned}
\end{equation}
where \(Y,\Bcal_{\d_3},\Pcal_1\) and \(\Pcal_2\) are as in \eqref{ybg-first} and \eqref{badgood}, \(\Gcal_h^-,\Gcal_h^+,\Gcal_p,\Dcal_h\) and \(\Dcal_p\) denote the good terms of \(\Gcal_{\d_3}\) in \eqref{badgood} as follows:
\begin{equation} \label{def-GD}
\begin{aligned}
\Gcal_h^-(U) &\coloneqq
\frac{\s_\e}{2}\int_\RR a'(h-\htil)^2 \one{p(v)>p(\vtil)+\d_3} d\x, \\
\Gcal_h^+(U) &\coloneqq
\frac{\s_\e}{2}\int_\RR a'\Big(h-\htil-\frac{p(v)-p(\vtil)}{\s_\e}\Big)^2 \one{p(v)\le p(\vtil)+\d_3} d\x, \\
\Gcal_p(U) &\coloneqq
\s_\e\int_\RR a' Q(v|\vtil) d\x, \\
\Dcal_h(U) &\coloneqq
\t_2\g\int_\RR a \frac{1}{v^{\a+1}} |\rd_\x (h-\htil)|^2 d\x, \\
\Dcal_p(U) &\coloneqq
\t_1\int_\RR a v^\b |\rd_\x (p(v)-p(\vtil))|^2 d\x.
\end{aligned}
\end{equation}
Moreover, the following also holds:
\begin{equation} \label{bad-bound2}
|\Jcal^{bad}(U)|
+\abs{\Pcal_1(U)}
+\abs{\Pcal_2(U)}
\le \Dcal_h(U)+\Dcal_p(U)+C\d_0.
\end{equation}
\end{prop}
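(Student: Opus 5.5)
The plan follows the $a$-contraction scheme of \cite{KV-Inven21,EEKO-ISO}. The idea is to reduce $\Rcal(U)$ to a sharp Poincaré-type inequality on the shock layer, after a truncation that handles large values of $p(v)-p(\vtil)$. First fix a small $\d_3$ and introduce the truncation $\bar v$ of $v$ by the condition $|p(\bar v)-p(\vtil)|\le \d_3$, that is,
\[
p(\bar v)-p(\vtil)=\max\big(-\d_3,\min(\d_3,p(v)-p(\vtil))\big).
\]
Split each bad term into a part evaluated at $\bar U=(\bar v,h)$ and a remainder supported where $|p(v)-p(\vtil)|>\d_3$. The remainders are controlled as follows.

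\textbf{Outer region.} Use Lemma \ref{lem-pro} (the linear lower bound \eqref{rel_Phi-L} for $Q(v|\vtil)-Q(\bar v|\vtil)$) together with the $\d_0\frac{\e}{\l}\Gcal_p$ margin kept in \eqref{contraction}. This handles the parts of $\s_\e\int a\vtil' p(v|\vtil)$ and of the quadratic term in $\Bcal_{\d_3}$ coming from large values of $v$.

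\textbf{Region $p(v)>p(\vtil)+\d_3$.} This is where $v$ is near vacuum. The cross term $\int a'(p(v)-p(\vtil))(h-\htil)$ is bounded by Young's inequality against $\Gcal_h^-$ and the diffusion $\Dcal_p$. For the latter, use $|a'|\le C\e\l e^{-C\e|\x|}$ from \eqref{der-scale} and an $L^\infty$ bound on $(p(v)-p(\bar v))$ obtained from $\int |\rd_\x(p(v)-p(\bar v))|\,v^{\b}$ plus Cauchy--Schwarz. The weight $v^{\b}$ with $\b=\g-\a\ge0$ degenerates only mildly near vacuum, and this is where the restriction \eqref{g-a-condition} enters.

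\textbf{Parabolic terms $\Pcal_1,\Pcal_2$.} Write $v^\b-\vtil^\b$ and $v^{-\a-1}-\vtil^{-\a-1}$ in terms of $p(v)-p(\vtil)$, use $|\vtil''|,|\htil''|\le C\e|\vtil'|$ from \eqref{2nd-der} and $|a'|\sim\frac{\l}{\e}\vtil'$, and absorb everything into $\d_0\Dcal_p$, $\frac12\Dcal_h$, and a small multiple of $\Gcal_p$. In $\Pcal_2$ the term with $\rd_\x(v^{-\a-1})\htil'$ must be integrated by parts or bounded via $\Dcal_p$ times $\e\sqrt{\Gcal_p}$. The smallness $\t_2\le\frac19\t_1$ from \eqref{0.9} is used here: the $h$-diffusion cross terms must be dominated by the $p$-diffusion $\Dcal_p$ and by the good term $\Gcal_h^+$.

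\textbf{Inner reduction.} On the truncated quantities, change to the variable $y=\frac{p(\vtil(\x))-p(v_-)}{\e}\in[0,1]$, so that $a'd\x\sim\l\,dy$, and set $w=p(\bar v)-p(\vtil)$. Expand $Y$, $\Bcal$ and $\Gcal$ to leading order in $\e$ using \eqref{Q-est-U}, \eqref{Q-est-L}, \eqref{p-est1}, \eqref{ratio-vh} and \eqref{sm1}. The smallness $|Y|\le\e^2$ lets one replace $h-\htil$ by $w/\s_*$ in $Y$ up to errors controlled by $\Gcal_h^+$. The functional $\Rcal(\bar U)$ then reduces, up to $O(\d_0+\e/\l+\d_3)$ relative errors, to the functional of the sharp nonlinear Poincaré inequality of \cite[Proposition 3.3]{KV-Inven21}:
\[
-\frac1{\d}\Big(\int_0^1 W^2+2\int_0^1 W\Big)^2+(1+\d)\int_0^1W^2+\frac23\int_0^1W^3+\d\int_0^1|W|^3-(1-\d)\int_0^1 y(1-y)|\rd_yW|^2\le0 .
\]
Here $W$ is a rescaling of $w$ by $\l/\e$. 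To convert $\Dcal_p$ into $\int y(1-y)|\rd_y W|^2$ one needs $d y/d\x\sim\vtil'$ and the identity $\vtil'\approx c\,y(1-y)\e^2$ coming from the profile ODE \eqref{VS}. The ODE has to be checked carefully because of the $\t_2$-term: the effective diffusion coefficient is $\t_1+\t_2=1$ after eliminating $\htil$, and the constants must combine exactly to give the coefficient in front of the Poincaré term.

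\textbf{Bound \eqref{bad-bound2}.} This is the cruder version of the same estimates. Bound $|\Jcal^{bad}|$ by $C\frac{\l}{\e}\int\vtil'(|p(v)-p(\vtil)|^2+|h-\htil|^2)$, and use Lemmas \ref{lemma_pushing} and \ref{lemma_Linfty} to push to $L^\infty$ near the layer. Then use $|Y|\le\e^2$, which controls the weighted mean values, together with the diffusion terms.

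\textbf{Main obstacle.} The hardest step is verifying that the leading-order constants, including the $\t_1,\t_2$ split and the degenerate coefficients, reproduce exactly the sharp Poincaré structure with a positive margin, while all $\t_2$-generated cross terms stay absorbable using $\t_1\ge0.9$.
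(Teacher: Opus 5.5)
Your overall architecture (truncate $p(v)-p(\vtil)$ at level $\d_3$, control the parts of the bad terms where $|p(v)-p(\vtil)|>\d_3$, reduce the truncated $v$-functional to a nonlinear Poincar\'e inequality on $[0,1]$) is the one used in the paper. However, the central step fails as you state it.

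You claim the weighted Dirichlet term appears with coefficient $1$, and you invoke the KV inequality with $-(1-\d)\int_0^1 y(1-y)|\rd_y W|^2dy$. In the truncated functional the only usable diffusion is $\Dcal_p$, which carries the factor $\t_1$. The Jacobian \eqref{jacob} is normalized by $\t_1+\t_2=1$, so what you actually get is $2\a_\g\frac{\l^2}{\e^3}\Dcal_p\approx \t_1\int_0^1 y(1-y)|W_y|^2dy$, with $\t_1<1$ as soon as $c>0$. The missing $\t_2$ cannot be taken from $\Dcal_h$. It controls $\rd_\x(h-\htil)$ rather than $\rd_\x W$, and, unlike \eqref{keyD}, it is not monotone under truncation, so it cannot be passed to $\Ubar$. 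Hence \cite[Proposition 3.3]{KV-Inven21} does not apply. You need the refined inequality of Proposition \ref{prop_NPI} (\cite[Proposition 5.1]{EE-BNSF}), which holds with $-(0.9-\d)$ under $\int_0^1W^2dy\le C_1$, and this is exactly where $\t_1\ge 0.9$ (i.e. $c\le 9/100$) enters.

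Your attribution of $\t_1\ge0.9$ to the $h$-diffusion cross terms in $\Pcal_2$ is misplaced. Those terms are small only because $\e,\l$ are small and $a'$ decays (e.g. $|B_6|\le C\l\Dcal_h+\cdots$ in \eqref{para-B6}), with no role for the ratio $\t_2/\t_1$. Relatedly, $\int_0^1W^2\le C_1$ requires $|\Ical_g^Y(\vbar)|\le C_2\e^2/\l$ with $C_2$ independent of the truncation level, as in \eqref{lbisC2}. So $\d_3$ must be fixed after $C_2$, not ``first''.

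Your treatment of the vacuum region $\{p(v)>p(\vtil)+\d_3\}$ also does not work as written. An $L^\infty$ bound on $p(v)-p(\vbar)$ via Cauchy--Schwarz against $\Dcal_p$ needs $\int v^{-\b}\one{p(v)>p(\vtil)+\d_3}$ to be controlled by $Q(v|\vtil)$. This fails as $v\to0$ when $\a<1$, since $Q(v|\vtil)$ only grows like $v^{1-\g}$ (or $-\log v$). The paper inserts the weight $\sqrt v$, so that $v^{1-\b}\lesssim Q(v|\vtil)$. It then derives \eqref{pwp2}--\eqref{pwp3} for $\sqrt v(p(v)-p(\vbar_b))$ and $v(p(v)-p(\vbar_b))^2$, and integrates them using Lemma \ref{lemma_pushing} and the nonlinearization \eqref{powerup}.

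Moreover, $B_1^-$ is bounded by Cauchy--Schwarz against the smallness \eqref{locE} of $\int a'(h-\htil)^2$, not by Young against $\Gcal_h^-$. Young leaves $\int_{\O^c}a'(p(v)-p(\vtil))^2$, which the available pointwise estimates bound only superlinearly in $\Dcal_p$ (like $\frac{C}{\l}\Dcal_p^2$). The case $\Dcal_p\ge 6C^*\e^2/\l$ in the final argument needs the linear bound \eqref{hyp-total}. The same objection applies to your sketch of \eqref{bad-bound2}, and your pointwise bound $p(v|\vtil)\lesssim|p(v)-p(\vtil)|^2$ also fails for large $v$. There one instead uses $|\Jcal^{bad}|\le|\Bcal_{\d_3}|+C\l$ together with \eqref{hyp-total}, \eqref{para-1-total} and \eqref{para-2-total-0}.
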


The proof of Proposition \ref{prop:main} will be presented in the next section. We now show that Proposition \ref{prop:main} yields Theorem \ref{thm_main3}, which in turn implies Theorem \ref{thm_main} as an immediate consequence. To prove Theorem \ref{thm_main3}, we only need to show that the right-hand side of \eqref{FU} is negative under the condition $|Y(U)|\le \e^2$. However, since $\Gcal_{\delta_3}=\Gcal_h^-+\Gcal_h^++\Gcal_p+\Dcal_h+\Dcal_p$, we use Proposition \ref{prop:main} with \eqref{ineq-1} to obtain
\begin{align}
\begin{aligned}\label{FU-1}
\Fcal(U)
&\le -\frac{1}{\e^4}\abs{Y(U)}^2 + \Bcal_{\d_3}(U)
+\Pcal_1(U)+\Pcal_2(U)
- \Gcal_{\d_3}(U)\\
&\le -\d_0\frac{\e}{\l}\Gcal_p(U) - \frac{1}{2}\Dcal_h(U) - \d_0\Dcal_p(U) \le 0. 
\end{aligned}
\end{align}
Thus, combining \eqref{FU-0} for $|Y(U)|\ge \e^2$, \eqref{FU-1} for $|Y(U)|\le \e^2$ and \eqref{ybg-first}, we find that for all \(U\in\Hcal_T\),
\begin{align*}
\Fcal(U) + \d_0\frac{\e}{\l}\Gcal_p(U) + \d_0(\Dcal_h(U)+\Dcal_p(U))
+(|\Jcal^{bad}(U)|+\abs{\Pcal_1(U)}+\abs{\Pcal_2(U)})\oone{\{Y(U)\ge \e^2\}} \le 0.
\end{align*}
Therefore, from \eqref{rel_ent_est}, we find that for a.e. \(t>0\),
\begin{multline} \label{RB-Gronwall}
\frac{d}{dt}\int_\RR a\et(U^X|\Util) d\x
+\d_0\frac{\e}{\l}\Gcal_p(U) + \d_0(\Dcal_h(U)+\Dcal_p(U))\\
+(|\Jcal^{bad}(U)|+\abs{\Pcal_1(U)}+\abs{\Pcal_2(U)})\oone{\{Y(U)\ge \e^2\}} \le 0.
\end{multline}
Integrating both sides in time yields
\begin{multline} \label{WhoisL1}
\int_\RR a\et(U^X|\Util) d\x
+\d_0\frac{\e}{\l} \int_0^T \Gcal_p(U)dt + \d_0 \int_0^T(\Dcal_h(U)+\Dcal_p(U))dt\\
+\int_0^T (|\Jcal^{bad}(U)|+\abs{\Pcal_1(U)}+\abs{\Pcal_2(U)})\oone{\{Y(U)\ge \e^2\}}dt
\le\int_\RR a\et(U_0|\Util) d\x < \infty,
\end{multline}
which establishes \eqref{cont_main}.

\vspace{2mm}
It now remains to prove \eqref{shift-control}.
To this end, we first note by \eqref{Phi} and \eqref{def-shift} that
\begin{equation} \label{X-upper}
|\dot{X}| \le \frac{1}{\e^2} (2|\Jcal^{bad}(U)|+2\abs{\Pcal_1(U)}+2\abs{\Pcal_2(U)}+1),
\quad \text{ for a.e. } t\in(0,T).
\end{equation}
Thanks to \eqref{WhoisL1}, \(\Dcal_h(U)\) and \(\Dcal_p(U)\) are in \(L^1(0,T)\), and thus \eqref{bad-bound2} implies
\[
(2|\Jcal^{bad}(U)|+2\abs{\Pcal_1(U)}+2\abs{\Pcal_2(U)})\oone{\{Y(U)\le \e^2\}}  \in L^1(0,T).
\]
This together with \eqref{WhoisL1}--\eqref{X-upper} yields \eqref{shift-control}, which completes the proof of Theorem \ref{thm_main3}. \qed\\

Therefore, it remains to show that Proposition \ref{prop:main} holds, which will be treated in the following section.

\section{Proof of Proposition \ref{prop:main}}\label{sec:4}
\setcounter{equation}{0}
In this section, we present the proof of Proposition \ref{prop:main}. 
The proof consists of two parts. First we apply a nonlinear Poincar\'e inequality to derive precise estimates when \(p(v)\) is close to \(p(\vtil)\) in Section \ref{sec:in}. We exploit the smallness of the localized relative entropy together with interpolation-type pointwise estimates when $p(v)$ is not close to $p(\vtil)$ in Section \ref{sec:out}.

\subsection{Sharp estimate near a shock wave} \label{sec:in}
In this subsection, we provide a sharp estimate when \(p(v)\) exhibits close values to \(p(\vtil)\).
To this end, we define the following functionals:
\begin{equation} \label{note-in}
\begin{split}
&\begin{aligned} 
&\Ical_g^Y(v) \coloneqq
-\frac{1}{2\s_\e^2} \int_\RR a'\abs{p(v)-p(\vtil)}^2d\x - \int_\RR a' Q(v|\vtil) d\x \\
&\qquad\qquad\qquad
-\int_\RR a p(\vtil)' (v-\vtil) d\x + \frac{1}{\s_\e} \int_\RR a \htil' (p(v)-p(\vtil)) d\x
\end{aligned} \\
&\begin{aligned}
&\Ical_1(v) \coloneqq 
\frac{1}{2\s_\e} \int_\RR a' (p(v)-p(\vtil))^2 d\x,
&&\Ical_2(v) \coloneqq
\s_\e \int_\RR a \vtil' p(v|\vtil) d\x, \\
&\Gcal_p(v) \coloneqq
\s_\e \int_\RR a' Q(v|\vtil) d\x,
&&\Dcal_p(v) \coloneqq
\t_1\int_\RR a v^\b \abs{\rd_\x (p(v)-p(\vtil))}^2 d\x.
\end{aligned}
\end{split}
\end{equation}
Note that all of these quantities depend only on \(v\), not on \(h\). The following proposition provides a sharp estimate when \(p(v)\) is close to \(p(\vtil)\), which will become one of the key parts to prove Proposition \ref{prop:main} (See Section \ref{sec:proof}).

\begin{prop} \label{prop:main3}
For any constant \(C_2>0\), there exist constants \(\e_0,\d_3>0\) such that for any \(\e \in (0,\e_0)\) and any \(\l,\d \in (0,\d_3]\) with \(\e \le \l\), the following statement holds. \\
For any function \(v \colon \RR \to \RR^+\) such that \(\Dcal_p(v)+\Gcal_p(v)\) is finite, if
\begin{equation} \label{assYp}
\abs{\Ical_g^Y(v)} \le C_2 \frac{\e^2}{\l},\qquad  \norm{p(v)-p(\vtil)}_{L^\infty(\RR)}\le \d_3,
\end{equation}
then
\begin{align}
\begin{aligned}\label{redelta}
\Rcal_{\e,\d}(v)
&\coloneqq -\frac{1}{\e\d}\abs{\Ical_g^Y(v)}^2
+\Ical_1(v)
+\Ical_2(v)-\left(1-\d \frac{\e}{\l} \right)\Gcal_p(v)-(1-\d)\Dcal_p(v)\le 0.
\end{aligned}
\end{align}
\end{prop}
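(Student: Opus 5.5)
This is the standard nonlinear Poincaré argument of Kang--Vasseur (cf. \cite[Proposition 4.4]{KV-Inven21}, \cite{EEKO-ISO}), adapted to the diffusion $\t_1 v^\b |\rd_\x(p(v)-p(\vtil))|^2$. I would first change variables to the shock's own coordinate
\[
y(\x)\coloneqq \frac{p(v_-)-p(\vtil(\x))}{\e}\in(0,1),
\]
so that $dy = -\frac{1}{\e}p'(\vtil)\vtil'\,d\x = \frac{1}{\l}a'\,d\x$. Then I would introduce the rescaled perturbation
\[
w(y)\coloneqq \frac{\l}{\e}\bigl(p(v)-p(\vtil)\bigr)(\x(y)).
\]
By \eqref{assYp}, $w$ is pointwise bounded by $\l\d_3/\e$, and the relevant integrals become integrals over $[0,1]$. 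The aim is to rewrite each of $\Ical_g^Y,\Ical_1,\Ical_2,\Gcal_p,\Dcal_p$ as a leading-order expression in $w$ times $\e^3/\l^2$ (or $\e^2/\l$ for $\Ical_g^Y$), plus errors of relative size $O(\e+\l+\d_3)$.

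The expansions I expect are as follows. First, use \eqref{Q-est-U}--\eqref{p-est1} to replace $Q(v|\vtil)$ and $p(v|\vtil)$ by quadratic and cubic expressions in $p(v)-p(\vtil)$, with coefficients frozen at $p(v_-)$ up to $O(\e+\d_3)$. Use \eqref{ratio-vh} and \eqref{sm1} to replace $\htil'$ by $-\s_*\vtil'$ and $\s_\e$ by $\s_*$, and $a=1+O(\l)$. This should give
\[
\Ical_1+\Ical_2-\Gcal_p \approx \frac{\e^3}{\l^2}\,\alpha_*\Bigl(\int_0^1 w^2\,dy+\frac{2}{3}\cdot(\text{const})\int_0^1 w^3\,dy-\ldots\Bigr).
\]
Here the $\vtil'$-weighted term $\Ical_2$ is of the form $\int w^2\,(\text{something in } y)\,dy$ after using $\vtil'\propto y(1-y)$ to leading order (the profile ODE from \eqref{VS} at small amplitude is Burgers-like). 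For the diffusion, compute $\rd_\x = \frac{dy}{d\x}\rd_y$ with $\frac{dy}{d\x}\approx C\e\, y(1-y)$. Using $v^\b\approx v_-^\b$ and $\t_1\ge 0.9$, this gives
\[
\Dcal_p \gtrsim \frac{\e^3}{\l^2}\,\t_1(\text{const})\int_0^1 y(1-y)|\rd_y w|^2\,dy.
\]
For the constraint, the linear terms in $\Ical_g^Y$ combine to $-\frac{\e^2}{\l}(\text{const})\int_0^1 w\,dy + O(\text{quadratic})$, so $|\Ical_g^Y|\le C_2\e^2/\l$ gives control of $\int w$. The term $-\frac{1}{\e\d}|\Ical_g^Y|^2$ becomes $-\frac{\e^3}{\l^2}\frac{1}{\d}(\text{const})\bigl(\int w\bigr)^2$ plus quadratic corrections.

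The inequality then reduces to a one-dimensional functional inequality on $[0,1]$. For that I invoke the sharp nonlinear Poincaré inequality of \cite[Proposition 3.3]{KV-Inven21}: for $W$ with $\int y(1-y)|W'|^2<\infty$,
\[
-\frac{1}{\d}\Bigl(\int W^2+2\int W\Bigr)^2+(1+\d)\int W^2-\frac{2}{3}\int W^3+\d\int |W|^3-(1-\d)\int y(1-y)|W'|^2\le 0 .
\]
Matching constants requires the coefficient of the diffusion to be at least the sharp one, which is where $\t_1\ge 0.9$ (i.e.\ $c\le 9/100$) enters, since only the $\t_1$-share of dissipation acts on $v$. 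I would therefore rescale $w$ to $W$ so that the ratios of the $\int W^2$, $\int W^3$ and $\int y(1-y)|W'|^2$ coefficients reproduce those in that inequality, with the slack from $\t_1>0.9$ absorbing the $\d$-losses.

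The main obstacle is bookkeeping the error terms. The cubic/quartic remainders, for example $\int |w|^3$ against $\d_3$, must be controlled by the good terms using $\|w\|_{L^\infty}$ small relative to $\l/\e$ times $\d_3$. This is delicate only via the $L^\infty$ bound together with a Sobolev-type bound $\sup|w|^2\lesssim \int w^2+\int y(1-y)|w'|^2$ near the ends. The variable coefficient $v^\b$ versus $v_-^\b$ must also be handled, using $|v-v_-|\le C(\e+\d_3)$. All such errors are of size $C(\e/\l+\l+\d_3)$ times the good terms, and are absorbed by choosing $\e_0,\d_3$ small, keeping the constraint $\e\le\l$.
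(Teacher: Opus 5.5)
There is a genuine gap at the last step: the nonlinear Poincar\'e inequality you invoke is not strong enough. After multiplying by $2\a_\g\l^2/\e^3$, your expansions reduce $\Rcal_{\e,\d}$, up to errors of relative size $O(\l+\d_3)$, to
\[
-\frac{1}{C\d}\Big(\int_0^1 W^2\,dy+2\int_0^1 W\,dy\Big)^2+\int_0^1 W^2\,dy+\frac23\int_0^1 W^3\,dy-\t_1\int_0^1 y(1-y)|\rd_y W|^2\,dy,
\]
where $W$ is your $w$. The coefficient $1$ in front of $\int W^2$ comes from $\Ical_2$. The coefficient of the Dirichlet term is exactly $\t_1$, because the Jacobian constant $\frac{\e}{2\a_\g}$ in \eqref{jacob} is fixed by the \emph{total} viscosity $\t_1+\t_2=1$ in the profile equations \eqref{VS}, while only the share $\t_1$ acts on $p(v)$ in $\Dcal_p$. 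The inequality you quote from \cite{KV-Inven21} needs the coefficient $1-\d$ there, but $\t_1$ may be as small as $0.9$.

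Rescaling cannot repair this. The map $W\mapsto\kappa W$ multiplies $\int W^2$ and $\int y(1-y)|\rd_yW|^2$ by the same factor $\kappa^2$, so their ratio $1:\t_1$ is invariant. It also destroys the normalization $\int W^2+2\int W$ of the constraint. A fixed deficit of up to $1/10$ cannot be absorbed into $\d$-losses either. What you need is a strengthened inequality valid with $0.9-\d$ in front of $\int y(1-y)|\rd_yW|^2$, namely Proposition \ref{prop_NPI} (\cite[Proposition 5.1]{EE-BNSF}). This is what the paper applies, and it is exactly where $\t_1\ge 0.9$ enters. Without it, or a proof of it, your argument does not close.

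Several smaller points:
\begin{itemize}
\item The bound $\sup|W|^2\lesssim\int W^2+\int y(1-y)|\rd_yW|^2$ you propose is false: a cut-off of $\log\log(e/y)$ near $y=0$ is unbounded but has finite right-hand side. The bound $|W|\le\l\d_3/\e$ is useless, since $\l/\e$ can be arbitrarily large. The cubic error $C\d_3\int|W|^3$ is instead absorbed by the term $\d\int|W|^3$ built into the nonlinear Poincar\'e inequality.
\item That inequality also requires $\int_0^1W^2\,dy\le C_1$ as a hypothesis, and ``control of $\int w$'' is not what is used. You must derive it from $\bigl|\int(W^2+2W)\bigr|\le C_2\s_*^2+C\d_3\int(W^2+|W|)$ together with $|\int W|\le\frac18\int W^2+8$.
\item Since $\vtil'\,d\x=\frac{\e}{|p'(\vtil)|}\,dy$, the term $\Ical_2$ becomes $\int W^2\,dy$ with no $y(1-y)$ weight. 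This is what makes the matching with the inequality possible.
\item With the constraint normalized as $\int W^2+2\int W$, the expansion of $\Ical_1-\Gcal_p$ via \eqref{Q-est-L} yields $+\frac23\int W^3$, which is the sign in Proposition \ref{prop_NPI}. The inequality you wrote has $-\frac23\int W^3$, and the two are not interchangeable.
\end{itemize}
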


The proof of Proposition \ref{prop:main3} requires the following nonlinear Poincar\'e-type inequality.
\begin{prop}[{\cite[Proposition 5.1]{EE-BNSF}}] \label{prop_NPI}
Let \(C_1>0\) be any constant.
Then, there exists a constant \(\d_2>0\) such that for any \(\d\in(0,\d_2)\), the following holds. \\
For any function \(W\in L^2(0,1)\) such that \(\sqrt{y(1-y)}\rd_y W \in L^2(0,1)\), if \(\int_0^1 \abs{W(y)}^2 dy \le C_1\), then the following inequality holds:
\begin{align*}
\Rcal_\d(W)\coloneqq
&-\frac{1}{\d} \left(\int_0^1 W^2 dy + 2\int_0^1 W dy \right)^2
+(1+\d)\int_0^1 W^2 dy \\
&\qquad
+\frac{2}{3}\int_0^1 W^3 dy
+\d\int_0^1 \abs{W}^3 dy
-(0.9-\d) \int_0^1 y(1-y)\abs{\rd_y W}^2 dy
\le 0.
\end{align*}
\end{prop}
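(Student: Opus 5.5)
The plan is to follow the Kang--Vasseur strategy for nonlinear Poincar\'e inequalities. We split \(W\) into its mean and a mean-zero fluctuation, use the quadratic penalty \(-\frac1\d(\cdots)^2\) to tie the mean to the fluctuation, and use the sharp weighted Poincar\'e inequality on \((0,1)\) to absorb everything else.

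\textbf{Step 1: the linear inequality.} Write \(\bar W:=\int_0^1 W\,dy\) and \(w:=W-\bar W\), and set \(D(w):=\int_0^1 y(1-y)|\rd_y w|^2dy\). The operator \(-\rd_y(y(1-y)\rd_y)\) is the Legendre operator, with eigenvalues \(n(n+1)\) and eigenfunctions the shifted Legendre polynomials. Its first nonzero eigenvalue is \(2\), so
\[
\int_0^1 w^2\,dy\le \tfrac12 D(w).
\]
Since \(\tfrac12<0.9\), the quadratic part \((1+\d)\int w^2\) is controlled by \(\tfrac{1+\d}{2}D(w)\). This leaves a margin of roughly \(0.4\,D(w)\) to absorb the cubic terms. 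It is the source of the room behind the constant \(0.9\), whereas the original Navier--Stokes version works with constant \(1\).

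\textbf{Step 2: using the constraint.} Expand \(\int W^2=\bar W^2+\int w^2\) and set \(Z:=\int W^2+2\bar W\), so the first term of \(\Rcal_\d(W)\) is \(-Z^2/\d\). If \(|Z|\ge \d^{1/4}\), that term is at most \(-\d^{-1/2}\), which beats every other term. This uses the bound \(\int W^2\le C_1\) together with a crude bound on the cubic terms of the form \(C(C_1)(1+D)^{1/2}\) or similar, with the \(D\)-part absorbed by \(-(0.9-\d)D\). So we may assume \(|Z|\le\d^{1/4}\). Then \(\bar W=-\tfrac12(\bar W^2+\int w^2)+O(\d^{1/4})\), so \(\bar W\le O(\d^{1/4})\) and \(\bar W\approx-\tfrac12\int w^2\) to leading order.

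Substituting into the expansion
\[
\tfrac23\int W^3=\tfrac23\bar W^3+2\bar W\int w^2+\tfrac23\int w^3,
\]
the term \(2\bar W\int w^2\approx-(\int w^2)^2\) is good. It dominates the positive term \(\bar W^2\approx\tfrac14(\int w^2)^2\) coming from \((1+\d)\bar W^2\), and \(\tfrac23\bar W^3\le0\) up to small errors.

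\textbf{Step 3: the cubic fluctuation and the main obstacle.} It remains to control \(\tfrac23\int w^3+\d\int|W|^3\) by the spare \(0.4\,D(w)\) and the good quartic term \(-c(\int w^2)^2\). First I would establish a weighted Sobolev/Hardy bound
\[
\|w\|_{L^4}^2\le C\Big(\int w^2+D(w)\Big),
\]
valid because the weight \(y(1-y)\) degenerates only linearly, which makes it effectively a two-dimensional \(H^1\) norm. Then
\[
\int|w|^3\le C\|w\|_{L^2}\Big(\int w^2+D(w)\Big).
\]
This is small relative to \(D\) when \(\|w\|_{L^2}\) is small.

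The difficulty is the regime where \(\int w^2\) is of order one (up to \(C_1\)), where this bound is not small. There I would use the constraint more carefully. Since \(\bar W^2+2\bar W+\int w^2\approx0\), we need \(\int w^2\le 1+O(\d^{1/4})\), and \(\bar W\) lies near the branch \(-1+\sqrt{1-\int w^2}\). I would then check that the one-variable function
\[
s\mapsto (1+\d)(\bar W^2+s)+\tfrac23\bar W^3+2\bar W s-\tfrac{1+\d}{2}\cdot\text{(spare)}
\]
remains nonpositive after inserting the \(L^4\) bound with explicit constants. If this fails near \(\int w^2\sim1\), I would follow \cite{EE-BNSF} and the original Kang--Vasseur argument: truncate \(W\) at a level \(k\), treat \(\{|W|>k\}\) separately via the \(\d\int|W|^3\) budget and \(L^\infty\)-type control from \(D\) away from the endpoints, and fix \(\d_2\) small at the end so that all \(O(\d)\) and \(O(\d^{1/4})\) errors fit into the margins.
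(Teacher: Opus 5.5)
There is a genuine gap: your argument controls only the part of the constraint set near $W=0$.

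\textbf{The constraint curve is larger than you assume.} Write $\bar W=\int_0^1W\,dy$, $w=W-\bar W$, $s=\int_0^1w^2\,dy$ and $D=\int_0^1y(1-y)|\partial_yw|^2dy$. The penalty forces $Z=\bar W^2+2\bar W+s\approx0$, i.e. $(\bar W+1)^2\approx1-s$. So $\bar W\approx-1\pm\sqrt{1-s}$ ranges over all of $[-2,0]$. Your claim $\bar W\approx-\frac12\int w^2$ discards $\bar W^2$ and holds only when $\bar W$ and $s$ are small. For instance, $W\equiv-2$ and $W=-1+\sqrt3(2y-1)$ satisfy $Z=0$ exactly, and neither fits your expansion. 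Near $\bar W=-2$ the sign is favourable ($\bar W^2+\frac23\bar W^3\approx-\frac43$), but that branch still has to be treated.

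\textbf{The hard range is $s\in[s_0,1]$ for a fixed $s_0>0$.} There, after using $D\ge2s$, the polynomial part of $\mathcal R_\delta$ is negative only by an amount of order $|\bar W|$. At $\bar W=-1$, $s=1$ this amount is about $2.47$. The fluctuation term $\frac23\int w^3$ is also of order one. So, to leading order in $\delta$, one must prove
\[
\tfrac23\int_0^1 w^3\,dy<\Big(-1.6\bar W+2.2\bar W^2+\tfrac43\bar W^3\Big)+0.9\,(D-2s)
\]
along the whole curve, with explicit constants. Your bound $\int|w|^3\le C\|w\|_{L^2}(\int w^2+D)$, with unspecified $C$, gives $\frac23\int|w|^3\le C\sqrt s\,D$. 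For large $D$ this is absorbable only if $C\sqrt s<0.9$, i.e. only for small $s$. A compactness argument in $\delta$ would at best reduce the claim to the $\delta=0$ statement
\[
\int W^2+\tfrac23\int W^3-0.9\int y(1-y)|W'|^2<0 \quad\text{on } \Big\{\int W^2+2\int W=0\Big\}\setminus\{0\},
\]
with a margin proportional to $\int W^2$. That statement is precisely the quantitative content of the proposition, so no soft argument avoids it.

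\textbf{Step 3 does not fill this gap.} The one-variable function you propose omits $\int w^3$, and its ``spare'' term is undefined. In the fallback, $\delta\int|W|^3$ enters $\mathcal R_\delta$ with a plus sign, so it is an error to be controlled, not a budget. Truncating large values of $W$ cannot help either, because the obstruction is an $O(1)$ bulk quantity, not large values.

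\textbf{Two smaller points.}
\begin{enumerate}
\item All terms of $\mathcal R_\delta$ vanish at least quadratically as $W\to0$. Hence $O(\delta^{1/4})$ errors cannot be absorbed additively ``at the end''. They must carry a factor of $s$, or be absorbed by keeping the penalty $-Z^2/\delta$.
\item Your reduction to $|Z|\le\delta^{1/4}$ needs a bound sublinear in $D$, such as the Ladyzhenskaya-type estimate $\int|w|^3\lesssim\|w\|_{L^2}^2(\|w\|_{L^2}+D^{1/2})$. Your Step 3 bound is linear in $D$ and does not provide it.
\end{enumerate}

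\textbf{What is missing} is the core of the proposition. You need an interpolation estimate for $\int_0^1|W-\bar W|^3$ in terms of $\|W-\bar W\|_{L^2}$ and $D$, with a numerically controlled constant. You then need a case analysis over the whole curve $\bar W\in[-2,0]$ showing that the cubic fluctuation fits under the available margin when the dissipation coefficient is $0.9$.
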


To use Proposition \ref{prop_NPI} in the proof of Proposition \ref{prop:main3}, we need to rewrite all the functionals in \eqref{note-in} in terms of a bounded variable \(y\in[0,1]\).
To this end, we will consider the following variables:
\begin{equation} \label{yandw}
y=\frac{p(v_-)-p(\vtil(\x))}{\e}, \qquad
w \coloneqq (p(v)-p(\vtil)) \circ y^{-1}, \qquad
W \coloneqq \frac{\l}{\e} w.
\end{equation}
The following lemma provides a Jacobian estimate corresponding to the variables in \eqref{yandw}.

\begin{lemma} \label{lem-Jacobian}
Let \(\a_\g \coloneqq \frac{\g\s_*p(v_-)}{\g+1}\).
Then, there exists a constant \(\e_0>0\) such that for any \(\e \coloneqq \abs{p(v_-)-p(v_+)} <\e_0\), the following holds:
\begin{equation} \label{jacob}
\abs{\frac{\vtil^\b}{y(1-y)}\frac{dy}{d\x} - \frac{\e}{2\a_\g}} \le C \e^2.
\end{equation}
\end{lemma}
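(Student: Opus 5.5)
The plan is to integrate the profile ODE \eqref{VS} once and combine the two integrated equations so that the viscous terms collapse into a single multiple of \(\vtil^{-\a-1}\vtil'\), using \(\t_1+\t_2=1\). Integrating \eqref{VS} from \(-\infty\) (where \(\vtil'=\htil'=0\) and \((\vtil,\htil)=(v_-,u_-)\)) gives
\begin{align*}
&-\s_\e(\vtil-v_-)-(\htil-u_-) = -\t_1\vtil^\b \rd_\x p(\vtil),\\
&-\s_\e(\htil-u_-)+p(\vtil)-p(v_-) = \t_2\g\vtil^{-\a-1}\htil'.
\end{align*}
I will multiply the first equation by \(-\s_\e\) and add it to the second. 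The \(\htil-u_-\) terms then cancel, and I obtain
\[
F(\vtil)\coloneqq \s_\e^2(\vtil-v_-)+p(\vtil)-p(v_-) = \t_1\s_\e\vtil^\b p(\vtil)' + \t_2\g\vtil^{-\a-1}\htil'.
\]
Since \(\b=\g-\a\), we have \(\vtil^\b p(\vtil)'=-\g\vtil^{-\a-1}\vtil'\). Differentiating the first integrated equation gives \(\htil'=-\s_\e\vtil'+\t_1(\vtil^\b p(\vtil)')'\), and by \eqref{2nd-der} this equals \(-\s_\e\vtil'+O(\e|\vtil'|)\). Using \(\t_1+\t_2=1\), the right-hand side becomes \(-\s_\e\g\vtil^{-\a-1}\vtil'+O(\e|\vtil'|)\).

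Next I express the left-hand side in terms of \(y\). Set \(z\coloneqq p(\vtil)-p(v_-)=-\e y\) and invert \(p\) near \(v_-\) by Taylor expansion:
\[
\vtil-v_- = \frac{z}{p'(v_-)}-\frac{p''(v_-)}{2p'(v_-)^3}z^2+O(z^3).
\]
The function \(z\mapsto F\) is smooth and vanishes both at \(z=0\) and at \(z=-\e\); the latter is exactly the Rankine--Hugoniot condition \eqref{end-con}. Hence \(F=z(z+\e)G(z)\) with \(G\) smooth. Using \(\s_*^2=-p'(v_-)\) and \eqref{sm1}, I expect \(G(z)=\frac{p''(v_-)}{2p'(v_-)^2}+O(\e)\) uniformly for \(|z|\le\e\). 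Therefore
\[
F = -\frac{p''(v_-)}{2p'(v_-)^2}\,\e^2 y(1-y)\,\big(1+O(\e)\big).
\]
The point of the factorization is that the error is relative, i.e. of size \(O(\e^3 y(1-y))\), rather than an absolute \(O(\e^3)\).

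Finally, \(\frac{dy}{d\x}=-\frac{p'(\vtil)\vtil'}{\e}\) and \(p'(\vtil)\vtil^\b=-\g\vtil^{-\a-1}\), so
\[
\vtil^\b\frac{dy}{d\x}=\frac{\g\vtil^{-\a-1}\vtil'}{\e}=-\frac{F}{\s_\e\e}+O(|\vtil'|).
\]
Dividing by \(y(1-y)\) yields the main term \(\frac{p''(v_-)}{2\s_* p'(v_-)^2}\e\). For \(p=v^{-\g}\) this equals \(\frac{\g+1}{2\g\s_*p(v_-)}\e=\frac{\e}{2\a_\g}\). Replacing \(\s_\e\) by \(\s_*\) costs \(O(\e^2)\) by \eqref{sm1}.

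The main obstacle is showing that the remainder \(O(|\vtil'|)\) is \(O(\e^2 y(1-y))\) uniformly in \(\x\), including in the tails where \(y(1-y)\to0\). To handle this, I will use the identity just derived in the other direction: it gives \(\vtil^{-\a-1}\vtil'\big(1+O(\e)\big)=-F/(\s_\e\g)\). This implies \(|\vtil'|\le C\e^2 y(1-y)\), so the \(O(\e|\vtil'|)\) error becomes \(O(\e^3y(1-y))\). After division by \(\e\) and by \(y(1-y)\), this gives the claimed \(C\e^2\) bound in \eqref{jacob}.
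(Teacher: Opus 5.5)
Your proof is correct and follows the paper's route: integrate \eqref{VS} once, combine the two integrated equations so that \(\t_1+\t_2=1\) collapses the viscous terms into \(-\s_\e\g\vtil^{-\a-1}\vtil'\) up to \(O(\e|\vtil'|)\), and Taylor-expand the algebraic part using the Rankine--Hugoniot condition. The only difference is bookkeeping. The paper splits \(\frac{1}{y(1-y)}=\frac{1}{y}+\frac{1}{1-y}\) and treats each piece with the equation integrated from \(-\infty\) resp.\ \(+\infty\), while you integrate once and factor \(F=z(z+\e)G(z)\) through both roots; your explicit bound \(|\vtil'|\le C\e^2y(1-y)\) also makes precise the tail control of the \(O(\e|\vtil'|)\) error, which the paper leaves implicit.
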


\begin{proof}
First of all, we integrate the shock equations \eqref{VS} over \((\pm\infty,\x]\) to find that 
\[
\begin{cases}
-\s_\e (\vtil-v_\pm) -(\htil-h_\pm) = -\t_1 \vtil^\b \rd_\x(p(\vtil)),\\
-\s_\e (\htil-h_\pm) +(p(\vtil)-p(v_\pm)) = \t_2\g \vtil^{-\a-1} \rd_\x \htil.
\end{cases}
\]
Using the above two equations, we have 
\[
\t_1 \vtil^\b \rd_\x p(\vtil) + \frac{\t_2\g}{\s_\e} \vtil^{-\a-1} \rd_\x \htil
= \frac{1}{\s_\e} \left( \s_\e^2 (\vtil - v_\pm) + (p(\vtil) - p(v_\pm))\right).
\]
Thanks to \eqref{ratio-vh} and \eqref{s*}--\eqref{sm1}, the left-hand side can be estimated as follows: 
\[
\abs{\t_1 \vtil^\b p(\vtil)' + \frac{\t_2\g}{\s_\e} \vtil^{-\a-1} \htil'
-(\t_1+\t_2)\vtil^\b p(\vtil)'} \le C\e|\vtil'|,
\]
where \(p(\vtil)'=\frac{d}{d\x}p(\vtil)\). Therefore, using $\tau_1+\tau_2=1$, we have
\[\left|\vtil^\beta p(\vtil)'-\frac{1}{\s_\e}(\s_\e^2(\vtil-v_\pm)+(p(\vtil)-p(v_\pm)))\right|\le C\e|\vtil'|.\]
On the other hand, since \eqref{yandw} implies
\[
\frac{\vtil^\b}{y(1-y)}\frac{dy}{d\x} = \vtil^\b \Big(\frac{1}{y}\frac{dy}{d\x} + \frac{1}{1-y}\frac{dy}{d\x}\Big)
= \frac{\vtil^\b p(\vtil)'}{p(\vtil) - p(v_-)} - \frac{\vtil^\b p(\vtil)'}{p(\vtil) - p(v_+)},
\]
we take all these estimates above into account and make use of Taylor expansion with \eqref{end-con} and \eqref{sm1} to choose a constant \(\e_0>0\) (smaller than the one in Lemma \ref{lem-VS}) such that for any \(\e=p(v_-)-p(v_+)<\e_0\),
\[
\abs{\frac{\vtil^\b}{y(1-y)}\frac{dy}{d\x}-\frac{\e}{2(\t_1+\t_2)}\frac{(\g+1)}{\g\s_* p(v_-)}} \le C\e^2,
\]
which completes the proof.
\end{proof}

We are now ready to provide the proof of Proposition \ref{prop:main3}.

\begin{proof}[Proof of Proposition \ref{prop:main3}]
For given \((v_-,u_-)\), we choose positive constants \(\e_0\) and \(\d_3\) such that \(\e_0\) is smaller than the minimum values specified in Lemma \ref{lem-VS} and Lemma \ref{lem-Jacobian}, and
\[
\d_3 < \min \Big(\frac{1}{2},\frac{\d_*}{2}\Big)
\]
where the constant \(\d_*>0\) is defined in Lemma \ref{lem:local}.
In addition, we choose \(\e_0\) smaller than \(\d_3\). Now we rewrite the functionals \(\Ical_g^Y,\Ical_1,\Ical_2,\Gcal_p,\) and \(\Dcal_p\) with respect to $y$-variable given in \eqref{yandw}.

\noindent \(\bullet\) \textbf{Change of variable for \(\Ical_g^Y\):}
Recall from \eqref{note-in} that
\begin{align*}
\Ical_g^Y(v) &=
-\frac{1}{2\s_\e^2} \int_\RR a'\abs{p(v)-p(\vtil)}^2d\x
-\int_\RR a' Q(v|\vtil) d\x\\
&\qquad\qquad\qquad
-\int_\RR a p(\vtil)' (v-\vtil) d\x
+\frac{1}{\s_\e} \int_\RR a \htil' (p(v)-p(\vtil)) d\x.
\end{align*}
Then, using \eqref{Q-est-U}, \eqref{Q-est-L}, \eqref{ratio-vh} and \eqref{sm1}, we obtain
\begin{equation} \label{insidey}
\abs{\s_*^2 \frac{\l}{\e^2}\Ical_g^Y + \int_0^1 (W^2 + 2W) dy} \le C\d_3 \Big(\int_0^1 W^2 dy + \int_0^1 |W| dy\Big).
\end{equation}
By \eqref{insidey} and \eqref{assYp}, we note that 
\begin{align*}
\int_0^1 W^2 dy - 2 \abs{\int_0^1 W dy} &\le \abs{\int_0^1(W^2+2W) dy} \le \s_*^2\frac{\l}{\e^2}|\Ical^Y_g|+C \d_3 \Big(\int_0^1 W^2 dy + \int_0^1 |W|dy \Big)\\
&\le C_2 \s_*^2 + C \d_3 \Big(\int_0^1 W^2 dy + \int_0^1 |W|dy \Big).
\end{align*}
Then, since we have
\[
\abs{\int_0^1 W dy} \le \int_0^1 |W| dy \le \frac{1}{8} \int_0^1 W^2 dy + 8,
\]
we choose \(\d_3\) so that there exists a constant \(C_1\) (depends on \(C_2\), but not on \(\e,\l\)) such that
\begin{equation} \label{WL2C1}
\int_0^1 W^2 dy \le C_1.
\end{equation}
This, together with an inequality $-p^2\le -\frac{q^2}{2}+|p+q|^2$ and \eqref{insidey}, implies a precise estimate on \(\abs{\Ical_g^Y}^2\) as follows:
\begin{equation} \label{insidey2}
\begin{aligned}
- \s_*^2 \frac{\l^2}{\e^3} \frac{\abs{\Ical_g^Y}^2}{\e\d_3}
&= - \frac{1}{\s_*^2 \d_3} \abs{\s_*^2\frac{\l}{\e^2} \Ical_g^Y}^2 \\
&\le - \frac{1}{2\s_*^2\d_3} \abs{\int_0^1 (W^2+2W) dy}^2
+ C \d_3 \abs{\int_0^1 (W^2+|W|)dy}^2 \\
&\le - \frac{1}{2\s_*^2\d_3} \abs{\int_0^1 (W^2+2W) dy}^2
+ C \d_3 \int_0^1 W^2 dy.
\end{aligned}
\end{equation}

\vspace{2mm}
\noindent\(\bullet\) \textbf{Change of variable for \(\Ical_1-\Gcal_p\):}
Using \eqref{Q-est-U}--\eqref{Q-est-L} and \eqref{sm1}, we find that
\begin{equation} \label{insideb1g}
\begin{aligned}
\abs{2\a_\g \frac{\l^2}{\e^3}(\Ical_1-\Gcal_p) - \frac{2}{3}\int_0^1 W^3 dy}
&\le C \e \Big(\frac{\l}{\e}\Big) \int_0^1 W^2 dy + C \d_3 \int_0^1 \abs{W}^3 dy.
\end{aligned}
\end{equation}
Here, we recall \(\a_\g = \frac{\g\s_*p(v_-)}{\g+1}\).

\vspace{2mm}
\noindent\(\bullet\) \textbf{Change of variable for \(\Ical_2\):}
Using \eqref{p-est1}, \eqref{sm1}, \eqref{assYp} and \(\l\le\d_3\), we have
\begin{equation} \label{insideb2}
\begin{aligned}
\abs{2\a_\g \frac{\l^2}{\e^3} \Ical_2 - \int_0^1 W^2 dy} \le C\d_3 \int_0^1 W^2 dy.
\end{aligned}
\end{equation}

\vspace{2mm}
\noindent\(\bullet\) \textbf{Change of variable for \(\Dcal_p\):}
The diffusion term \(\Dcal_p\) can be rewritten as
\[
\Dcal_p = \t_1\frac{\e^2}{\l^2} \int_0^1 a v^\b \abs{W_y}^2 \Big(\frac{dy}{d\x}\Big) dy.
\]
Then, the Jacobian estimate \eqref{jacob} with \eqref{assYp} implies that
\begin{equation} \label{insided}
\abs{2\a_\g \frac{\l^2}{\e^3} \Dcal_p - \t_1\int_0^1 y(1-y) \abs{W_y}^2 dy}
\le C\d_3 \int_0^1 y(1-y) \abs{W_y}^2 dy.
\end{equation}

Since $\d\mapsto \Rcal_{\e,\d}(v)$ is an increasing function, we observe that for any \(\d\in(0,\d_3]\),
\begin{align*}
\Rcal_{\e,\d}(v) \le \Rcal_{\e,\d_3}(v)= 
- \frac{1}{\e \d_3} \abs{\Ical_g^Y(v)}^2 + \Ical_1(v)
+ \Ical_2(v)
-\Big(1-\d_3\frac{\e}{\l}\Big)\Gcal_p(v) - (1-\d_3)\Dcal_p(v).
\end{align*}
Combining \eqref{insidey2}--\eqref{insided}, it holds that for any \(\d \in (0,\d_3]\) and for some constants \(C_3,C_4>0\),
\begin{multline*}
2\a_\g \frac{\l^2}{\e^3}\Rcal_{\e,\d}(v) \le 
- \frac{1}{C_3 \d_3} \abs{\int_0^1 (W^2+2W) dy}^2 + \int_0^1 W^2 dy
+ C_4 \d_3 \int_0^1 W^2 dy \\
+ \frac{2}{3}\int_0^1 W^3 dy + C_4 \d_3 \int_0^1 |W|^3 dy 
- (0.9-C_4 \d_3) \int_0^1 y(1-y)|W_y|^2 dy,
\end{multline*}
where we used $\t_1\ge 0.9$ (see \eqref{0.9}) in the last inequality.

We now fix \(\d_2\) in Proposition \ref{prop_NPI} corresponding to \(C_1\) in \eqref{WL2C1}.
Then, we choose \(\d_3\) further small enough such that \(\max(C_3,C_4)\d_3 < \d_2\).
For such choice of $\d_3$, for any \(\e<\e_0\) and any \(\l\le\d_3\) with \(\e\le\l\), we apply Proposition \ref{prop_NPI} to establish the desired result \eqref{redelta}.
\end{proof}

\subsection{Truncation of large values of \(\abs{p(v)-p(\vtil)}\)}\label{sec:out}
To apply Proposition \ref{prop:main3} in the proof of Proposition \ref{prop:main}, two issues need to be addressed.
First, since the truncation size \(\d_3\) depends on the constant \(C_2\), it is necessary to obtain an estimate of \(C_2\) that is uniform with respect to the truncation size.
Second, we need to show that all bad terms for values of \(v\) such that \(\abs{p(v)-p(\vtil)}\ge\d_3\) have a negligible effect.
The following lemma provides the key ingredient for both tasks, namely the smallness of the localized relative entropy, from which the first one, i.e., a uniform estimate on \(C_2\), directly follows as in \eqref{lbisC2}.

To this end, we introduce a truncation on \(\abs{p(v)-p(\vtil)}\) with any truncation level \(k>0\) to estimate the constant \(C_2\).
Let \(\psi_k\) be a continuous function on \(\RR\) defined by 
\[
\psi_k(y) = \inf (k, \sup(-k,y)), \quad k>0.
\]
Then, we define the function \(\vbar_k\) uniquely (thanks to the monotonicity of $p$) by 
\[
p(\vbar_k)-p(\vtil) = \psi_k(p(v)-p(\vtil)).
\]
Note that the following properties of $\vbar_k$ hold:
\begin{equation}\label{vbar_prop}
	\norm{p(\vbar_k)-p(\vtil)}_{L^{\infty}(\RR)}\le k,\quad \mbox{and either $v\le \vbar_k\le \vtil$ or $\vtil \le \vbar_k\le v$}.
\end{equation}

We now state a lemma that provides the localized relative entropy smallness and a uniform estimate on the constant \(C_2\).

\begin{lemma} \label{lem:locE}
For a fixed state \((v_-,u_-)\in\RR^+\times\RR\), there exist constants \(C_2,k_0,\e_0,\d_0>0\) such that for any \(\e<\e_0\), \(\l<1/2\) with \(\e/\l<\d_0\), the following holds whenever \(\abs{Y(U)} \le \e^2\):\\
\begin{align}
&\int_\RR a' (h-\htil)^2 d\x
+ \int_\RR a' Q(v|\vtil) d\x \le C\frac{\e^2}{\l}, \label{locE}\\
&\hspace{10mm}
\abs{\Ical_g^Y(\vbar_k)} \le C_2 \frac{\e^2}{\l}, \quad \forall\, k \le k_0. \label{lbisC2}
\end{align}
\end{lemma}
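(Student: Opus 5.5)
The plan follows the analogous argument for Navier--Stokes in \cite[Lemma 4.4]{KV-Inven21} and \cite{EEK-BNSF}. The key input is the constraint $\abs{Y(U)}\le\e^2$, which ties the localized entropy to terms linear in the perturbation.

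\textbf{Structure of $Y$.} Using \eqref{ybg-first}, write
\[
Y(U) = -\int_\RR a'\et(U|\Util)\,d\x + \int_\RR a\big(-p(\vtil)'(v-\vtil) + \htil'(h-\htil)\big)\,d\x,
\]
since $\nabla\et(\Util)=(-p(\vtil),\htil)$. Rearranging gives
\[
\int_\RR a'\et(U|\Util)\,d\x \le \e^2 + \Big|\int_\RR a\, p(\vtil)'(v-\vtil)\,d\x\Big| + \Big|\int_\RR a\,\htil'(h-\htil)\,d\x\Big|.
\]

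\textbf{Controlling the linear terms.} By \eqref{der-scale}, $\abs{\htil'}+\abs{p(\vtil)'}\le C\frac{\e}{\l}a'$. Since $a\le 2$, Cauchy--Schwarz with weight $a'$ and $\int a'=\l$ bound the $h$-term by
\[
C\frac{\e}{\l}\sqrt{\l}\Big(\int_\RR a'(h-\htil)^2\Big)^{1/2}.
\]
For the $v$-term, split $\{v\le 3v_-\}$ from $\{v>3v_-\}$ and use Lemma \ref{lem-pro}. On the first set $\abs{v-\vtil}\le C\sqrt{Q(v|\vtil)}$, so Cauchy--Schwarz gives $C\frac{\e}{\sqrt\l}\big(\int a' Q\big)^{1/2}$. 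On the second set $\abs{v-\vtil}\le CQ(v|\vtil)$, giving $C\frac{\e}{\l}\int a'Q$. Since $\e/\l<\d_0$ is small, this last piece is absorbed into the left-hand side. Young's inequality on the square-root terms then yields
\[
\int_\RR a'\et(U|\Util)\le C\e^2 + C\frac{\e^2}{\l},
\]
and because $\l<1/2$ this is \eqref{locE}.

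\textbf{Proof of \eqref{lbisC2}.} Each of the four terms of $\Ical_g^Y(\vbar_k)$ is bounded using \eqref{vbar_prop}: since $\vbar_k$ lies between $v$ and $\vtil$, \eqref{Phi-sim} gives $Q(\vbar_k|\vtil)\le Q(v|\vtil)$. Also $\abs{p(\vbar_k)-p(\vtil)}\le k_0$, so for $k_0<\d_*$, \eqref{p-quad} applies and $\abs{p(\vbar_k)-p(\vtil)}^2\le CQ(\vbar_k|\vtil)$, with $\abs{\vbar_k-\vtil}$ comparable to $\abs{p(\vbar_k)-p(\vtil)}$. The quadratic terms are then bounded by $C\int a'Q(v|\vtil)\le C\e^2/\l$. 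The linear terms $\int a\, p(\vtil)'(\vbar_k-\vtil)$ and $\frac1{\s_\e}\int a\,\htil'(p(\vbar_k)-p(\vtil))$ are bounded via Cauchy--Schwarz, as above, by
\[
C\frac{\e}{\sqrt\l}\Big(\int_\RR a'Q(v|\vtil)\Big)^{1/2}\le C\frac{\e}{\sqrt\l}\cdot\frac{\e}{\sqrt\l}=C\frac{\e^2}{\l}.
\]
This gives $C_2$ independently of $k$.

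\textbf{Main obstacle.} The delicate point is the large-$v$ region in the linear $v$-term, where $Q$ grows only linearly. There one must exploit the smallness $\e/\l<\d_0$ to absorb $C\frac{\e}{\l}\int a'Q$ into the left-hand side; this is why the hypothesis $\e/\l<\d_0$ is needed.
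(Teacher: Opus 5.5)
Your proposal is correct and follows essentially the same argument the paper relies on. The paper omits the proof and cites \cite[Lemma 4.4]{KV-Inven21} and \cite[Lemma 4.1]{EEKO-ISO}, which proceed exactly this way: rearrange $Y$, bound the linear terms by weighted Cauchy--Schwarz with the split $v\lessgtr 3v_-$ and absorb the large-$v$ piece using $\e/\l<\d_0$, then obtain \eqref{lbisC2} from $Q(\vbar_k|\vtil)\le Q(v|\vtil)$ and the local quadratic bound.
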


\begin{proof}
Since \(Y\) has the same form as in the barotropic NS case, the proof is identical. Thus, we omit it and refer the reader to \cite[Lemma 4.1]{EEKO-ISO} and \cite[Lemma 4.4]{KV-Inven21}.
\end{proof}

We now fix the constant \(\d_3\), associated with the constant \(C_2\) from Lemma \ref{lem:locE}, for use in Proposition \ref{prop:main3}.
If it is necessary, we retake \(\d_3\) such that \(\d_3<k_0\) for the constant \(k_0\) in \eqref{lbisC2}.
Then, we set
\[
\vbar \coloneqq \vbar_{\d_3}, \quad \Ubar \coloneqq (\vbar,h).
\]
In what follows, we employ the following notation: 
\[
\O \coloneqq \{\x \mid (p(v)-p(\vtil))(\x) \le \d_3\}.
\]

To proceed further, we decompose \(\Bcal_{\d_3}\) in \eqref{badgood} into three parts in the following way:
\begin{equation} \label{B-decomp}
\Bcal_{\d_3} = B_1^- + B_1^+ + B_2
\end{equation}
where
\begin{align*}
B_1^- &\coloneqq \int_{\O^c} a' (p(v)-p(\vtil))(h-\htil) d\x, \qquad
B_1^+ \coloneqq \frac{1}{2\s_\e}\int_{\O} a' \abs{p(v)-p(\vtil)}^2 d\x, \\
B_2 &\coloneqq \s_\e\int_\RR a \vtil' p(v|\vtil) d\x.
\end{align*}
Moreover, we recall  \(\Gcal_h^-,\Gcal_h^+,\Gcal_p,\Dcal_h\) and \(\Dcal_p\) in \eqref{def-GD} for the good terms.

\vspace{2mm}
The key distinction between the two diffusion terms lies in the following monotonicity property. Precisely, \(\Dcal_p\) satisfies 
\begin{equation} \label{keyD}
\begin{aligned}
&\Dcal_p(U) = \t_1\int_\RR a v^\b \abs{\rd_\x (p(v)-p(\vtil))}^2 d\x \\
&= \t_1\int_\RR a v^\b \abs{\rd_\x (p(v)-p(\vtil))}^2 (\one{\abs{p(v)-p(\vtil)}\le\d_3}+\one{p(v)-p(\vtil)>\d_3}+\one{p(v)-p(\vtil)<-\d_3})d\x \\
&= \Dcal_p(\Ubar) + \t_1\int_\RR a v^\b \abs{\rd_\x (p(v)-p(\vtil))}^2 (\one{p(v)-p(\vtil)>\d_3}+\one{p(v)-p(\vtil)<-\d_3})d\x \ge \Dcal_p(\Ubar).
\end{aligned}
\end{equation}
However, \(\Dcal_h(U)\ge\Dcal_h(\Ubar)\) does not hold, and it remains invalid even if we define a truncation of \(h\) variable.
This constitutes the fundamental reason why \(\Dcal_h\) cannot be employed in Proposition \ref{prop:main3}.
On the other hand, \eqref{Phi-sim}, \eqref{vbar_prop} and \eqref{locE} imply the monotonicity property for the good term:
\begin{equation} \label{l2-p}
0 \le \s_\e \int_\RR a' \abs{Q(v|\vtil)-Q(\vbar|\vtil)}d\x
= \Gcal_p(U)-\Gcal_p(\Ubar) \le C\frac{\e^2}{\l}.
\end{equation}

\vspace{2mm}
The strategies for controlling \(B_1^-\) and \(B_1^+\) differ substantially and in particular, the control of \(B_2\) requires a delicate analysis.
In other words, our approach must distinguish between the cases where \(p(v)\) takes large values and where it takes small values.
For this purpose, we introduce the following one-sided truncation \(\vbar_s\) and \(\vbar_b\) as follows:
\begin{equation} \label{trunc-sb}
p(\vbar_s)-p(\vtil) \coloneqq \psi_{\d_3}^b(p(v)-p(\vtil)), \quad
p(\vbar_b)-p(\vtil) \coloneqq \psi_{\d_3}^s(p(v)-p(\vtil)),
\end{equation}
where \(\psi_{\d_3}^b\) and \(\psi_{\d_3}^s\) are one-sided truncations, i.e., 
\[
\psi_{\d_3}^b(y) \coloneqq \sup (-\d_3,y), \quad
\psi_{\d_3}^s(y) \coloneqq \inf (\d_3,y).
\]
Note that the truncation \(\vbar_s\) (resp. \(\vbar_b\)) represents the truncation of large (resp. small) values of \(v\) corresponding to \(\abs{p(v)-p(\vtil)}\ge\d_3\).
It is also worth noting that
\begin{align*}
    p(\vbar_s) - p(v)
&= (\psi_{\d_3}^b - I) (p(v)-p(\vtil))
= (-(p(v)-p(\vtil))-\d_3)_+, \\
p(v) - p(\vbar_b)
&= (I-\psi_{\d_3}^s)(p(v)-p(\vtil)) 
= ((p(v)-p(\vtil))-\d_3)_+, \\
\abs{p(v)-p(\vbar)}
&= \abs{(I-\psi_{\d_3})(p(v)-p(\vtil))} 
= (\abs{p(v)-p(\vtil)}-\d_3)_+.
\end{align*}
In addition, by \eqref{keyD} it holds that
\begin{equation} \label{eq-D}
\begin{aligned}
\Dcal_p(U) & \ge
\int_\RR a v^\b \abs{\rd_\x (p(v)-p(\vbar_s))}^2 d\x
+\int_\RR a v^\b \abs{\rd_\x (p(v)-p(\vbar_b))}^2 d\x.
\end{aligned}
\end{equation}

Furthermore, the following observations are required to proceed: for any \(\x\in\RR\) such that \(\abs{(p(v)-p(\vbar))(\x)}>0\) or equivalently \(\abs{(p(v)-p(\vtil))(\x)}>\d_3\), it holds that \(Q(v|\vtil)(\x)\ge\a\) for some constant \(\a>0\) depending only on \(\d_3\).
Thus,
\begin{equation} \label{Qconst}
\begin{aligned}
&\hspace{18mm}\one{\abs{p(v)-p(\vbar)}>0} = \one{\abs{p(v)-p(\vtil)}>\d_3}
\le \frac{Q(v|\vtil)}{\a},\\
&\one{\abs{p(v)-p(\vbar_s)}>0}+\one{\abs{p(v)-p(\vbar_b)}>0}
= \one{\abs{p(v)-p(\vtil)}>\d_3}
\le \frac{Q(v|\vtil)}{\a}.
\end{aligned}
\end{equation}
Moreover, since \(v\mapsto Q(v|\vtil)/v\) is increasing on \((\vtil,+\infty)\), we obtain
\begin{equation} \label{Q-lin}
v \one{\abs{p(v)-p(\vbar_s)}>0} = v \one{p(v)-p(\vtil)<-\d_3} \le C Q(v|\vtil).
\end{equation}

\vspace{2mm}
The following proposition provides the controls of the hyperbolic bad terms.
\begin{prop} \label{prop:hyp}
For any fixed constant state \(U_-\coloneqq(v_-,u_-)\in\RR^+\times\RR\), there exist constants \(\e_0,\d_0,C,C^* > 0\) (in particular, \(C\) and \(C^*\) depend on \(\d_3\)) such that for any \(\e<\e_0\) and \(\d_0^{-1}\e<\l<\d_0<1/2\) and any \(U\in\Hcal_T\) with \(\abs{Y(U)}\le\e^2\), the following holds.
\begin{align}
&|B_1^+(U)-\Ical_1(\vbar)| \le C \frac{\e}{\l} \Dcal_p(U) + C \frac{\e^4}{\l^2}\Gcal_p(U),
\label{bo1p}\\
&|B_1^-(U)| \le \d_3 \Gcal_h^-(U)
+ C \frac{\e}{\l} \Dcal_p(U) + C \frac{\e^4}{\l^2}\Gcal_p(U),
\label{bo1m}\\
&|B_2(U)-\Ical_2(\vbar)| \le C\frac{\e^2}{\l^2} \Dcal_p(U)
+C\frac{\e^5}{\l^3}\Gcal_p(U)
+C\frac{\e}{\l}(\Gcal_p(U)-\Gcal_p(\Ubar)),
\label{bo2}\\
&\begin{aligned}
&|\Bcal_{\d_3}(U)|+|B_1^+(U)|
\le C^* \Big(\frac{\e^2}{\l} + \frac{\e}{\l} \Dcal_p(U)\Big).
\end{aligned}
\label{hyp-total}
\end{align}
\end{prop}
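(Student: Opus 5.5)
The plan follows the barotropic Navier--Stokes argument of \cite[Proposition 4.2]{KV-Inven21} and \cite{EEKO-ISO}. The only structural change is the diffusion weight: the dissipation available here is $\Dcal_p$, with weight $v^\b$, $\b=\g-\a$. The common tool is a pointwise bound on the excess $|p(v)-p(\vbar)|=(|p(v)-p(\vtil)|-\d_3)_+$ in terms of $\Dcal_p$ and the localized entropy. Fix $\x_0\in[-1/\e,1/\e]$ with $p(v)=p(\vbar)$ there; this is possible by \eqref{locE}, \eqref{lower-v} and \eqref{Qconst}, since $\int_{-1/\e}^{1/\e}\one{|p(v)-p(\vtil)|>\d_3}\le C\e^{-2}\frac{\l}{\e}\int a'Q \le C/\l\ll 1/\e$. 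Then
\[
|(p(v)-p(\vbar))(\x)|\le \int_{\x_0}^{\x}|\rd_\x(p(v)-p(\vbar))|\one{|p(v)-p(\vtil)|>\d_3}\,d\z .
\]
By Cauchy--Schwarz against $v^\b$, this is bounded by $\sqrt{\Dcal_p}$ times $\big(\int v^{-\b}\one{\cdot}\big)^{1/2}$. For large $p$ (small $v$) the weight $v^{-\b}$ is harmless, since $p(v)^{\b/\g}$ is controlled by the quantity being estimated; for small $p$ (large $v$) one uses \eqref{Q-lin} instead. I would handle the two one-sided truncations $\vbar_s,\vbar_b$ of \eqref{trunc-sb} separately, using \eqref{eq-D}.

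\textbf{Step 1: $B_1^+$, estimate \eqref{bo1p}.} On $\O$ the quantity $|p(v)-p(\vtil)|^2-|p(\vbar)-p(\vtil)|^2$ is supported where $p(v)-p(\vtil)<-\d_3$. There it is bounded by $C|p(v)-p(\vbar_s)|^2+C|p(v)-p(\vbar_s)|$, and $|p|$ is bounded because $v$ is large. Hence
\[
|B_1^+-\Ical_1(\vbar)|\le C\int a'\big(|p(v)-p(\vbar_s)|^2+|p(v)-p(\vbar_s)|\big).
\]
Insert the pointwise bound. Lemma \ref{lemma_Linfty} and Lemma \ref{lemma_pushing}, applied with $a'\sim\frac{\l}{\e}\vtil'$, convert $\int a'|\int_{\x_0}^\x f|$ into $\frac{C}{\e}\int a'|f|$. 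Young's inequality then splits the result into $\frac{\e}{\l}\Dcal_p$ and $\frac{\e^4}{\l^2}\Gcal_p$, the latter via \eqref{Qconst}. The constant $C^*$ and the bound on the $\mathrm{sup}$ of the excess are exactly where $\frac{\e^2}{\l}$ from \eqref{locE} enters, which fixes the powers $\e^4/\l^2$.

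\textbf{Step 2: $B_1^-$, estimate \eqref{bo1m}.} Apply Young's inequality on $\O^c$:
\[
|a'(p(v)-p(\vtil))(h-\htil)|\le \frac{\d_3\s_\e}{2}a'(h-\htil)^2+\frac{C}{\d_3}a'|p(v)-p(\vtil)|^2 .
\]
The first term gives $\d_3\Gcal_h^-$. In the second term, write $p(v)-p(\vtil)=(p(v)-p(\vbar_b))+\d_3$ and use $\one{\O^c}\le Q/\a$. The $\d_3^2$ part is then $C\int a'Q$, but this is too crude, because it must come with an $\e/\l$ factor. So instead I control $\int_{\O^c}a'$ itself through the pointwise argument of Step 1: each point of $\O^c$ is at least $\d_3$ from the level at $\x_0$, so this measure is also bounded by $\frac{\e}{\l}\Dcal_p+\frac{\e^4}{\l^2}\Gcal_p$. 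The large-$p$ regime needs the $v^{-\b}$ weight to be absorbed, which uses $\b\ge0$ together with $p(v)\ge p(\vtil)$.

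\textbf{Step 3: $B_2$, estimates \eqref{bo2} and \eqref{hyp-total}.} Here $a\vtil'=\frac{\e}{\l}\cdot O(a')$, so every error carries one extra $\e/\l$ compared with Step 1. Split
\[
p(v|\vtil)-p(\vbar|\vtil)=\big[p(v)-p(\vbar)\big]-p'(\vtil)(v-\vbar).
\]
The $p$-difference is handled as in Step 1, giving $\frac{\e^2}{\l^2}\Dcal_p+\frac{\e^5}{\l^3}\Gcal_p$. The linear term $|v-\vbar|$, for large $v$, is bounded by $C|Q(v|\vtil)-Q(\vbar|\vtil)|$ using \eqref{rel_Phi-L}, and this yields $\frac{\e}{\l}(\Gcal_p(U)-\Gcal_p(\Ubar))$ by \eqref{l2-p}.

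\textbf{Main obstacle.} The delicate point is this term in Step 3 for large $v$: $p(v|\vtil)$ grows linearly in $v$, and it is controlled only through the monotone difference of $Q$, not through $\Dcal_p$.

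For \eqref{hyp-total}, combine Steps 1--3 with $|\Ical_1(\vbar)|+|\Ical_2(\vbar)|\le C\int a'|p(\vbar)-p(\vtil)|^2\le C\frac{\e^2}{\l}$, which follows from \eqref{p-quad} and \eqref{locE}. Use $\Gcal_p\le C\e^2/\l$, again from \eqref{locE}, to absorb the $\Gcal_p$ terms into $\frac{\e^2}{\l}$, and bound $\Gcal_h^-$ by \eqref{locE} as well.
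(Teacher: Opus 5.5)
Your Step 2 fails, and this is exactly where the difficulty of the proposition lies. After Young's inequality you need $\frac{C}{\d_3}\int_{\O^c}a'|p(v)-p(\vtil)|^2\le C\frac{\e}{\l}\Dcal_p+C\frac{\e^4}{\l^2}\Gcal_p$, but you only treat the $\d_3^2$ part of $p(v)-p(\vtil)=(p(v)-p(\vbar_b))+\d_3$. Your band-crossing bound for $\int_{\O^c}a'$ is fine once $\x_0$ is taken with $|p(v)-p(\vtil)|(\x_0)\le\d_3/2$; it even gives $C\frac{\e}{\l}\Dcal_p$. The remaining piece $\int_{\O^c}a'|p(v)-p(\vbar_b)|^2$ is an unweighted $L^2$ norm of the pressure near vacuum, and it is not controlled linearly by $\Dcal_p$. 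Take $\g=1$, $\a<1$ (so $\b>0$), $h=\htil$, and a dip of $v$ near $\x=0$ of width $\ell$ and peak pressure $P$, profiled so that $\Dcal_p\sim P^{2-\b}/\ell$. The choice $\ell\log P\sim\e/\l$ keeps $\abs{Y(U)}\le\e^2$. Yet $\int_{\O^c}a'|p(v)-p(\vbar_b)|^2\sim\e\l\ell P^2$ exceeds $\frac{\e}{\l}\Dcal_p$ by a factor $\sim\e^2P^{\b}/\log^2P\to\infty$, and $\frac{\e^4}{\l^2}\Gcal_p\sim\e^6/\l^2$ is negligible. More generally, any bound $|B_1^-|\le\d_3\Gcal_h^-+F(v)$ that ignores the constraint on $h$ forces $F(v)\ge\frac{1}{2\d_3\s_\e}\int_{\O^c}a'|p(v)-p(\vtil)|^2$: take $h-\htil=(p(v)-p(\vtil))/(\d_3\s_\e)$ on $\O^c$.

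The missing idea is the paper's treatment of $\int_{\O^c}a'(p(v)-p(\vbar_b))(h-\htil)\one{p(v)-p(\vtil)>2\d_3}d\x$. It applies Cauchy--Schwarz and pays for $h-\htil$ with the localized smallness $\int a'(h-\htil)^2\le C\e^2/\l$ from \eqref{locE}, not with $\Gcal_h^-$. It pays for the pressure with the power-up \eqref{powerup}, the new pointwise estimate \eqref{pwp3} and Lemma \ref{lemma_pushing}, which yield $\int a'v^2(p(v)-p(\vbar_b))^4\le\frac{C}{\l}\Dcal_p^2+C\frac{\e^6}{\l^3}\Gcal_p^2$. This is quadratic in $\Dcal_p$, and only the product with $\sqrt{\e^2/\l}$ makes the final bound linear.

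The small-$v$ side of your pointwise argument has a related defect. Absorbing $v^{-\b}=p(v)^{\b/\g}$ into the excess being estimated gives $\sup|p(v)-p(\vbar_b)|^2\lesssim\Dcal_pM+(\Dcal_pM)^{2\g/(2\g-\b)}$, with $M=\int_{\x_0}^{\x}\one{p(v)-p(\vtil)>\d_3}d\z$. This is superlinear in $\Dcal_p$, so it cannot produce \eqref{hyp-total}, which is used precisely when $\Dcal_p$ is large. The paper instead estimates $\sqrt v\,(p(v)-p(\vbar_b))$, whose derivative is bounded by $C\sqrt v\,|\rd_\x(p(v)-p(\vtil))|+C\sqrt v\,|p(\vtil)'|$. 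Cauchy--Schwarz then only needs $v^{1-\b}\le C$, i.e.\ $\b\le1$. The $p(\vtil)'$ term is the true source of the $\Gcal_p$ terms in \eqref{bo1p}--\eqref{bo2}. This weighted bound, through \eqref{powerup} and \eqref{vp2-c}, is also what the small-$v$ part $\int\vtil'|p(v)-p(\vbar_b)|d\x$ of \eqref{bo2} needs; your ``as in Step 1'' covers only $\vbar_s$.

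There are two smaller slips. Step 1 drops the contribution $\frac{\d_3^2}{2\s_\e}\int_{\O^c}a'$ of $\Ical_1(\vbar)$; band crossing fixes this. Splitting the linear term $\int_\O a'|p(v)-p(\vbar_s)|$ ``via \eqref{Qconst}'' leaves $\Gcal_p$ with an $O(1)$ coefficient; it needs the nonlinearization \eqref{NL} or band crossing. Finally, the $Q$-difference term in $B_2$ that you single out as the main obstacle is routine. The real obstacle is the lack of a lower bound on $v$.
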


\vspace{2mm}
Two key ingredients of our analysis are the localized smallness of the relative entropy and interpolation-type pointwise estimates.
The following lemma establishes the latter.

\begin{lemma} \label{lem-pw}
Under the same assumptions as Proposition \ref{prop:hyp}, there exists \(\x_0\in[-1/\e,1/\e]\) such that the following holds: for any \(\x\in\RR\),
\begin{align}
&\abs{(p(v)-p(\vbar_s))(\x)}
\le C \sqrt{\int_{\x_0}^\x Q(v|\vtil) d\z} \sqrt{\Dcal_p(U)},
\label{pwp1}\\
&\abs{\sqrt{v}(p(v)-p(\vbar_b))(\x)} 
\le C\Big(\sqrt{\int_{\x_0}^\x Q(v|\vtil) d\z}\sqrt{\Dcal_p(U)}
+ \sqrt{\frac{\e^4}{\l^3}} \sqrt{\Gcal_p(U)}\Big),
\label{pwp2}\\
&\begin{aligned}
&\abs{v(p(v)-p(\vbar_b))^2(\x)} 
\le C\sqrt{\int_{\x_0}^\x v (p(v)-p(\vbar_b))^2 d\z}\sqrt{\Dcal_p(U)}\\
&\hspace{60mm}
+ C\frac{\e^2}{\l^2}\Dcal_p(U) + C\frac{\e^5}{\l^3}\Gcal_p(U).
\end{aligned}
\label{pwp3}
\end{align}
\end{lemma}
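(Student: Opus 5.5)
We first pick the base point \(\x_0\). By \eqref{locE}, \(\int_\RR a' Q(v|\vtil)\,d\x \le C\e^2/\l\). On \([-1/\e,1/\e]\) we have \(a'\ge C^{-1}\frac{\l}{\e}\e^2=C^{-1}\l\e\) by \eqref{lower-v} and \eqref{der-a}. Hence
\[
\int_{-1/\e}^{1/\e} Q(v|\vtil)\,d\x\le C\frac{\e}{\l^2},
\]
and some \(\x_0\in[-1/\e,1/\e]\) satisfies \(Q(v|\vtil)(\x_0)\le C\e^2/\l^2\), which is small since \(\e/\l<\d_0\). In particular \(|p(v)-p(\vtil)|(\x_0)<\d_3\), so \(p(v)-p(\vbar_s)\) and \(p(v)-p(\vbar_b)\) both vanish at \(\x_0\). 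All three estimates then come from the fundamental theorem of calculus started at \(\x_0\).

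For \eqref{pwp1}, set \(F=p(v)-p(\vbar_s)=-((p(\vtil)-p(v))-\d_3)_+\). It is nonzero only where \(v\) is large, i.e. \(p(v)<p(\vtil)-\d_3\). There \(\rd_\x F=\rd_\x(p(v)-p(\vtil))\), because \(p(\vbar_s)-p(\vtil)\) is the constant \(-\d_3\). We write
\[
|F(\x)|\le\Big|\int_{\x_0}^\x \one{F\neq0}\,|\rd_\x(p(v)-p(\vtil))|\,d\z\Big|
\]
and apply Cauchy--Schwarz with weight \(v^{\b}\):
\[
|F|\le \Big(\int \one{F\ne0} v^{-\b}\Big)^{1/2}\Big(\int a v^\b|\rd(p(v)-p(\vtil))|^2\Big)^{1/2}.
\]
Since \(\b=\g-\a\ge0\) and \(v\ge c\) on \(\{F\ne0\}\), we get \(v^{-\b}\le C\). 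Together with \eqref{Qconst}, this gives \(\one{F\ne0}\le Q/\a\), which yields \eqref{pwp1}.

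For \eqref{pwp2}, set \(G=p(v)-p(\vbar_b)=((p(v)-p(\vtil))-\d_3)_+\), supported where \(v\) is small. Here \(v^{-\b}\) is unbounded, so we differentiate \(\sqrt v\,G\) instead. We have \(\rd(\sqrt vG)=\sqrt v\,\rd G+\frac{v_\x}{2\sqrt v}G\). Writing \(v_\x=p(v)_\x/p'(v)\sim v^{\g+1}p(v)_\x\), the second term is bounded by \(C\sqrt v\,|p(v)_\x|\), using that \(v^{\g}G\) is bounded because \(G\le p(v)\). The term \(p(v)_\x\) splits as \(\rd(p(v)-p(\vtil))+p(\vtil)'\). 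The first piece is handled as before by Cauchy--Schwarz: \(v^{1-\b}\) times the indicator is bounded by \(Q\) (note \(\b=\g-\a\le1\) by \eqref{g-a-condition}, so \(v^{1-\b}\le C\) for small \(v\)). The \(p(\vtil)'\) piece yields \(\int \one{G\ne0}\sqrt v\,|\vtil'|\). We bound it by Cauchy--Schwarz against \(a'\sim\frac\l\e|\vtil'|\):
\[
\int\one{G\neq0}|\vtil'|\le \frac{\e}{\l}\int a'\frac{Q}{\a}\le C\frac{\e}{\l^2}\Gcal_p .
\]
Balancing the remaining factors of \(\e\) with \(|\vtil'|\le C\e^2\) should produce the \(\sqrt{\e^4/\l^3}\sqrt{\Gcal_p}\) term. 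The exact power bookkeeping is the step most likely to need adjustment, possibly by using Lemma \ref{lemma_Linfty}.

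For \eqref{pwp3}, apply the same procedure to \(vG^2\), whose derivative is \(2vG\,\rd G+v_\x G^2\). The first term is controlled by \(\sqrt{\int vG^2}\,\sqrt{\int v|\rd G|^2}\), with \(v\lesssim v^\b\) for small \(v\), which gives the main term. The \(v_\x G^2\) term and the \(p(\vtil)'\) contributions are the main obstacle. We convert them via \(v_\x\sim v^{\g+1}p(v)_\x\) and the boundedness of \(v^\g G\), reducing them to the previous type of terms. We then absorb the cross terms with Young's inequality, which produces \(\frac{\e^2}{\l^2}\Dcal_p+\frac{\e^5}{\l^3}\Gcal_p\). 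The \(\vtil'\)-weighted integrals are converted using \(|\vtil'|\le C\e^2\) and \(a'\sim\frac\l\e\vtil'\).
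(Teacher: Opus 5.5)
Your choice of $\x_0$, your proof of \eqref{pwp1}, and the skeleton of \eqref{pwp2} agree with the paper. That skeleton is: differentiate $\sqrt v\,G$ with $G=p(v)-p(\vbar_b)$, then use $G\le p(v)$ and $\b\le 1$. You also leave the last step of \eqref{pwp2} unfinished. The $p(\vtil)'$ piece gives $\int\one{G\neq0}|\vtil'|\,d\x\le C\frac{\e}{\l}\int_\RR a'Q(v|\vtil)\,d\x$; your extra factor $\l^{-1}$ is a miscount and would spoil the final power. The square-root form does not come from $|\vtil'|\le C\e^2$. It comes from the a priori smallness \eqref{locE}: $\int a'Q\le \big(\int a'Q\big)^{1/2}\big(C\e^2/\l\big)^{1/2}$, which yields exactly $C\sqrt{\e^4/\l^3}\sqrt{\Gcal_p(U)}$.

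The real gap is in \eqref{pwp3}. Write $v_\x=-\frac1\g v^{\g+1}p(v)_\x$ and use $G/p(v)\le1$ and $1/p(v)\le C$ on $\{G\neq0\}$. Then everything reduces to the main term plus
\[
J:=\int_\RR|\vtil'|\,vG^2\,d\x .
\]
None of the tools you list controls $J$. As $v\to0$, $vG^2\sim v^{1-2\g}$, while $Q(v|\vtil)$ grows only like $v^{1-\g}$ (or $|\log v|$ when $\g=1$), so $vG^2$ is not dominated by $Q$. Hence $|\vtil'|\sim\frac{\e}{\l}a'$ combined with \eqref{locE} does not work. Neither does $|\vtil'|\le C\e^2$, which would need a bound on $\int_\RR vG^2$. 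There is also no cross term for Young's inequality to absorb.

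The missing idea is to insert the already established \eqref{pwp2}, squared, into $J$:
\[
J\le C\Dcal_p(U)\int_\RR|\vtil'(\x)|\int_{\x_0}^{\x}Q(v|\vtil)\,d\z\,d\x+C\frac{\e^4}{\l^3}\Gcal_p(U)\int_\RR|\vtil'|\,d\x .
\]
Then apply Lemma~\ref{lemma_pushing}, which uses the exponential decay of $\vtil'$ to pull out the inner integral at a cost of $\e^{-1}$. The first term is at most
\[
\frac{C}{\e}\Dcal_p(U)\int_\RR|\vtil'|Q(v|\vtil)\,d\x\le\frac{C}{\l}\Dcal_p(U)\int_\RR a'Q(v|\vtil)\,d\x\le C\frac{\e^2}{\l^2}\Dcal_p(U)
\]
by \eqref{locE}. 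For the second term, $\int_\RR|\vtil'|=v_+-v_-\le C\e$ gives $C\frac{\e^5}{\l^3}\Gcal_p(U)$. This is precisely the remainder in \eqref{pwp3}; without this step your argument does not establish that estimate.
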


\begin{proof}
Our proof is carried out in two steps.
\step{1} We first show that there exists a point \(\x_0\in[-1/\e,1/\e]\) such that \((p(v)-p(\vbar))(\x_0)=0\).\\
First of all, using \eqref{lower-v}, \eqref{der-scale} and \eqref{locE}, we find that
\[
2\e \int_{-1/\e}^{1/\e} Q(v|\vtil) d\x
\le \frac{2\e}{\inf_{[-1/\e,1/\e]}a'} \int_\RR a' Q(v|\vtil)d\x
\le C \frac{\e}{\e\l} \frac{\e^2}{\l} = C\Big(\frac{\e}{\l}\Big)^2.
\]
This shows that there exists \(\x_0\in[-1/\e,1/\e]\) such that \(Q(v|\vtil)(\x_0) \le C(\e/\l)^2\).
Thus, for sufficiently small \(\d_0>0\), we apply \eqref{p-quad} to find that
\[
\abs{(p(v)-p(\vtil))(\x_0)} \le C\frac{\e}{\l}.
\]
Then, for \(\d_0\) small enough to satisfy \(C\e/\l \le \d_3/2\), it follows from the definition of \(\vbar\) that
\[
\abs{(p(v)-p(\vbar))(\x_0)}=0.
\]
In addition, we also have
\[
\abs{(p(v)-p(\vbar_s))(\x_0)}=\abs{(p(v)-p(\vbar_b))(\x_0)}=0.
\]

\step{2} Next, we establish the pointwise estimates \eqref{pwp1}--\eqref{pwp3}. If $p(v)=p(\vbar_s)$, then, there is nothing to show. When $p(v)\neq p(\vbar_s)$, it follows from the definition of $\vbar_s$ that there is a positive lower bound of $v$. Then, the fundamental theorem of calculus and H\"older's inequality yield that for any \(\x\in\RR\),
\begin{align*}
\abs{p(v)-p(\vbar_s)(\x)}
&\le \abs{\int_{\x_0}^\x \rd_\z(p(v)-p(\vbar_s)) d\z}
\le C \abs{\int_{\x_0}^\x v^{\b/2} \rd_\z(p(v)-p(\vbar_s)) d\z} \\
&\le C \sqrt{\int_{\x_0}^\x \one{p(v)-p(\vtil)<-\d_3}d\z} \sqrt{\Dcal_p(U)}.
\end{align*}
Then, using \eqref{Qconst}, the first pointwise estimate \eqref{pwp1} can be established.

Next, we show \eqref{pwp2} and \eqref{pwp3}. Again, if $p(v)=p(\vbar_b)$, then there is nothing to show. On \(\{p(v)-p(\vbar_b) \neq 0\}\), there is no strictly positive lower bound for \(v\), and therefore the proofs of the remaining estimates are more delicate.
Since \(v\) has an upper bound on \(\{p(v)-p(\vbar_b) \neq 0\}\), we apply \eqref{locE} and \eqref{Qconst} to find that for any \(\x\in\RR\),
\begin{equation} \label{pw-b}
\begin{aligned}
&\abs{\sqrt{v} (p(v)-p(\vbar_b))(\x)} 
\le C \abs{\int_{\x_0}^\x \sqrt{v} \rd_\z(p(v)-p(\vbar_b)) d\z}\\
&+ C \abs{\int_{\x_0}^\x \frac{(p(v)-p(\vbar_b))}{p(v)} \sqrt{v} \rd_\z(p(v)-p(\vtil)) d\z}
+ C \abs{\int_{\x_0}^\x \frac{(p(v)-p(\vbar_b))}{p(v)} \sqrt{v} p(\vtil)' d\z} \\
&\le C\sqrt{\int_{\x_0}^\x v^{1-\b}\one{p(v)-p(\vtil)>\d_3} d\z}\sqrt{\Dcal_p(U)}
+ C\abs{\int_{\x_0}^\x \vtil' \one{p(v)-p(\vtil)>\d_3} d\z} \\
&\le C\sqrt{\int_{\x_0}^\x Q(v|\vtil) d\z}\sqrt{\Dcal_p(U)}
+ C \sqrt{\frac{\e^4}{\l^3}} \sqrt{\Gcal_p(U)}.
\end{aligned}
\end{equation}

To prove \eqref{pwp3}, we proceed in a similar way as follows:
\begin{equation} \label{pw-bb}
\begin{aligned}
&\abs{v (p(v)-p(\vbar_b))^2(\x)} 
\le C \abs{\int_{\x_0}^\x v (p(v)-p(\vbar_b)) \rd_\z(p(v)-p(\vbar_b)) d\z}\\
&+ C \abs{\int_{\x_0}^\x \frac{(p(v)-p(\vbar_b))^2}{p(v)} v \rd_\z(p(v)-p(\vtil)) d\z}
+ C \abs{\int_{\x_0}^\x \frac{(p(v)-p(\vbar_b))^2}{p(v)} v p(\vtil)' d\z} \\
&\le C\sqrt{\int_{\x_0}^\x v (p(v)-p(\vbar_b))^2 d\z}\sqrt{\Dcal_p(U)}
+ C\underbrace{\abs{\int_\RR \abs{\vtil'} v (p(v)-p(\vbar_b))^2 d\z}}_{\eqqcolon J}. \\
\end{aligned}
\end{equation}
Then, for \(J\), we utilize \eqref{pwp2}, \eqref{locE}, \eqref{der-scale}, and Lemma \ref{lemma_pushing} as follows:
\begin{equation} \label{vp2-c}
\begin{aligned}
J &\le
C\Dcal_p(U) \int_\RR \abs{\vtil'(\x)} \int_{\x_0}^\x Q(v|\vtil) d\z d\x
+C\frac{\e^4}{\l^3}\Gcal_p(U)\int_\RR \abs{\vtil'(\x)} d\x\\
&\le
C\Dcal_p(U)\frac{1}{\e} \int_\RR \abs{\vtil'} Q(v|\vtil) d\x
+C\frac{\e^5}{\l^3}\Gcal_p(U)
\le C\frac{\e^2}{\l^2}\Dcal_p(U)+C\frac{\e^5}{\l^3}\Gcal_p(U).
\end{aligned}
\end{equation}
This completes the proof of \eqref{pwp3}.
\end{proof}

We note that \eqref{pwp3} constitutes a new type of pointwise estimate, which, in conjunction with Lemma \ref{lemma_pushing}, enables the exploitation of the exponential decay of the shock derivatives.

\vspace{2mm}
We are now ready to prove Proposition \ref{prop:hyp}.

\begin{proof}[Proof of Proposition \ref{prop:hyp}]
\bpf{bo1p}
We consider the following decomposition:
\begin{align*}
&\abs{B_1^+(U)-\Ical_1(\vbar)}
=\frac{1}{2\s_\e} \abs{\int_\O a'\abs{p(v)-p(\vtil)}^2d\x - \int_\RR a'\abs{p(\vbar)-p(\vtil)}^2d\x}\\
&\le
C \int_\O a' \abs{p(v)-p(\vbar)}^2 d\x
+ C \int_\O a' \abs{p(v)-p(\vbar)} d\x
+ C \int_{\O^c} a' d\x
\eqqcolon J_{11}+J_{12}+J_{13}.
\end{align*}
Since \(\vbar=\vbar_s\) on \(\O\), it follows from  \eqref{pwp1}, Lemma \ref{lemma_pushing} and \eqref{locE} that
\begin{equation} \label{bo1pJ1}
J_{11} \le C\Dcal_p(U) \int_\RR a' \int_{\x_0}^\x Q(v|\vtil) d\z d\x
\le C\Dcal_p(U) \frac{1}{\e} \int_\RR a' Q(v|\vtil) d\x
\le C\frac{\e}{\l}\Dcal_p(U).
\end{equation}
To handle \(J_{12}\), we carry out a nonlinearization so as to obtain a quadratic structure in the following way: we temporarily employ the truncations of \(v\) by \(\d_3/2\) in place of \(\d_3\), which we denote by \((\vbar_s)_{\d_3/2}\) and \((\vbar_b)_{\d_3/2}\), namely, 
\begin{equation} \label{half-trunc}
p((\vbar_s)_{\d_3/2})-p(\vtil) \coloneqq \psi_{\d_3/2}^b (p(v)-p(\vtil)), \quad
p((\vbar_b)_{\d_3/2})-p(\vtil) \coloneqq \psi_{\d_3/2}^s (p(v)-p(\vtil)).
\end{equation}
It is important to observe that all the estimates in Lemma \ref{lem-pw} hold for \((\vbar_s)_{\d_3/2}\) and \((\vbar_b)_{\d_3/2}\) as well as for \(\vbar_s\) and \(\vbar_b\).
Then, since \((y-\d_3) \ge 0\) implies \((y-\d_3/2)_+ \ge \d_3/2\), it follows that
\begin{equation} \label{NL}
(y-\d_3)_+ \le (y-\d_3/2)_+ \one{(y-\d_3) \ge 0}
\le (y-\d_3/2)_+ \left(\frac{(y-\d_3/2)_+}{\d_3/2}\right)
\le \frac{2}{\d_3} ((y-\d_3/2)_+)^2.
\end{equation}
This yields the following inequality encapsulating the nonlinearization:
\[
\abs{p(v)-p(\vbar_s)} \one{p(v)-p(\vtil) < -\d_3} \le C |p(v)-p((\vbar_s)_{\d_3/2})|^2,
\]
where $C$ depends on $\delta_3$. Thus, using the same argument as \eqref{bo1pJ1}, we obtain
\begin{equation} \label{bo1pJ2}
J_{12} \le C \int_\O a' \abs{p(v)-p(\vbar)} d\x
\le C \int_\O a' |p(v)-p((\vbar)_{\d_3/2})|^2 d\x
\le C \frac{\e}{\l} \Dcal_p(U).
\end{equation}
It remains to control \(J_{13}\).
Since both \(|p(v)-p((\vbar_b)_{\d_3/2})|\) and \(v|p(v)-p((\vbar_b)_{\d_3/2})|\) are strictly bounded away from zero on \(\O^c\), we have
\begin{equation} \label{NL-J3}
\mathbf{1}_{\O^c} = \one{p(v)-p(\vtil)>\d_3}
\le C v |p(v)-p((\vbar_b)_{\d_3/2})|^2,
\end{equation}
for some constant \(C\) that depends on \(\d_3\).
Then, using \eqref{vp2-c} with \eqref{der-scale}, we find that
\begin{equation} \label{bo1pJ3}
J_{13}
\le C \int_{\O^c} a' d\x
\le C \int_\RR a' v |p(v)-p((\vbar_b)_{\d_3/2})|^2 d\x
\le C \frac{\e}{\l} \Dcal_p(U) + C \frac{\e^4}{\l^2}\Gcal_p(U).
\end{equation}
Gathering \eqref{bo1pJ1}, \eqref{bo1pJ2}, and \eqref{bo1pJ3}, the desired result \eqref{bo1p} can be established.

\vspace{2mm}
\bpf{bo1m}
We consider the following decomposition:
\begin{align*}
\abs{B_1^-(U)} &\le
\abs{\int_{\O^c} a' (p(v)-p(\vtil))(h-\htil) \one{p(v)-p(\vtil) \le 2\d_3} d\x} \\
&\qquad
+\abs{\int_{\O^c} a' (p(v)-p(\vbar_b))(h-\htil) \one{p(v)-p(\vtil) > 2\d_3} d\x} \\
&\qquad
+\abs{\int_{\O^c} a' (p(\vbar_b)-p(\vtil))(h-\htil) \one{p(v)-p(\vtil) > 2\d_3} d\x}
\eqqcolon J_{21}+J_{22}+J_{23}.
\end{align*}
Thanks to the definition of \(\O\) and \(\Gcal_h^-\), it simply follows that 
\[
J_{21}+J_{23} \le 3\d_3 \abs{\int_{\O^c} a' (h-\htil)d\x}
\le \d_3 \Gcal_h^-(U)
+C\int_{\O^c} a' d\x.
\]
Then, using \eqref{bo1pJ3}, we obtain 
\[
J_{21}+J_{23} \le \d_3 \Gcal_h^-(U)
+ C \frac{\e}{\l} \Dcal_p(U) + C \frac{\e^4}{\l^2}\Gcal_p(U).
\]

What remains is to control \(J_{22}\), which constitutes one of the most delicate aspects of our analysis.
By an argument analogous to that in \eqref{NL-J3}, we obtain
\begin{equation} \label{powerup}
(p(v)-p(\vbar_b)) \one{p(v)-p(\vtil) > 2\d_3}
\le C v(p(v)-p(\vbar_b))^2.
\end{equation}
This together with \eqref{locE} implies that
\begin{align*}
J_{22} &\le \Big(\int_{\O^c} a' (p(v)-p(\vbar_b))^2 \one{p(v)-p(\vtil) > 2\d_3} d\x\Big)^{1/2}
\Big(\int_{\O^c} a' (h-\htil)^2 d\x\Big)^{1/2} \\
&\le C \sqrt{\frac{\e^2}{\l}} \Big(\int_\RR a' v^2(p(v)-p(\vbar_b))^4 d\x\Big)^{1/2}.
\end{align*}
Then, we apply \eqref{pwp3} in the following way:
\begin{align*}
\int_\RR a' v^2(p(v)-p(\vbar_b))^4 d\x
&\le C\Dcal_p(U) \int_\RR a' \int_{\x_0}^\x v(p(v)-p(\vbar_b))^2 d\z d\x\\
&\qquad
+C \Big(\frac{\e^4}{\l^4}\Dcal_p(U)^2
+\frac{\e^{10}}{\l^6}\Gcal_p(U)^2\Big)
\int_\RR a' \one{p(v)-p(\vtil) > \d_3} d\x.
\end{align*}
Using Lemma \ref{lemma_pushing}, \eqref{Qconst} and \eqref{locE}, we obtain
\begin{align*}
\int_\RR a' v^2(p(v)-p(\vbar_b))^4 d\x
&\le \frac{C}{\e}\Dcal_p(U)\int_\RR a' v(p(v)-p(\vbar_b))^2 d\x
+C \Big(\frac{\e^6}{\l^5}\Dcal_p(U)^2
+\frac{\e^{12}}{\l^7}\Gcal_p(U)^2\Big).
\end{align*}
We apply \eqref{bo1pJ3} and Young's inequality for the first term on the right-hand side to obtain
\begin{align*}
&\int_\RR a' v^2(p(v)-p(\vbar_b))^4 d\x
\le \frac{C}{\l} \Dcal_p(U)^2
+C\frac{\e^6}{\l^3}\Gcal_p(U)^2.
\end{align*}
Thus, since both \(\Dcal_p(U)\) and \(\Gcal_p(U)\) are positive, it follows that
\[
J_{22} \le C \Big(\frac{\e^2}{\l^2} \Dcal_p(U)^2 + \frac{\e^8}{\l^4}\Gcal_p(U)^2 \Big)^{1/2}
\le C\frac{\e}{\l}\Dcal_p(U) + C\frac{\e^4}{\l^2}\Gcal_p(U).
\]
Therefore, the desired result \eqref{bo1m} can be established.

\vspace{2mm}
\bpf{bo2} Proceeding as in \cite[Lemma 3.3]{KV-JEMS21} and \cite[Proposition 4.3]{EEKO-ISO}, we arrive at
\begin{align*}
\abs{B_2(U)-\Ical_2(\vbar)}
&\le C\int_\RR \vtil' \abs{p(v|\vtil)-p(\vbar|\vtil)} d\x\\
&\le C \int_\RR \vtil' \abs{p(v)-p(\vbar)} d\x
+ C \int_\RR \vtil' (Q(v|\vtil)-Q(\vbar|\vtil)) d\x
\eqqcolon J_{31}+J_{32}.
\end{align*}
To handle \(J_{31}\), we consider the following decomposition:
\begin{align*}
J_{31}
&= C \int_\RR \vtil' \abs{p(v)-p(\vbar_s)} d\x
+ C \int_\RR \vtil' \abs{p(v)-p(\vbar_b)} \one{p(v)-p(\vtil) > 2\d_3} d\x \\
&\qquad
+ C \int_\RR \vtil' \abs{p(v)-p(\vbar_b)} \one{p(v)-p(\vtil)\le 2\d_3} d\x
\eqqcolon J_{311}+J_{312}+J_{313}.
\end{align*}
Thanks to the nonlinearization argument in the proof of \eqref{bo1p}, using \eqref{der-scale} yields that
\[
J_{311} \le C \frac{\e^2}{\l^2} \Dcal_p(U).
\]
Then, following \eqref{powerup} and \eqref{vp2-c}, we get
\begin{align*}
J_{312} \le C \int_\RR \abs{\vtil'} v \abs{p(v)-p(\vbar_b)}^2 d\x
&\le C\frac{\e^2}{\l^2} \Dcal_p(U)
+C\frac{\e^5}{\l^3}\Gcal_p(U).
\end{align*}
Moreover, using the definition of \(\vbar_b\) with \eqref{der-scale}, we apply \eqref{bo1pJ3} to find that
\[
J_{313} \le C\frac{\e}{\l} \int_\RR a' \one{p(v)-p(\vtil)> \d_3} d\x
\le C\frac{\e^2}{\l^2}\Dcal_p(U) + C\frac{\e^5}{\l^3}\Gcal_p(U).
\]
On the other hand, for \(J_{32}\), it simply holds by \eqref{der-scale} that 
\[
J_{32} \le C\frac{\e}{\l}(\Gcal_p(U)-\Gcal_p(\Ubar)).
\]
Thus, summing up all, we obtain \eqref{bo2} as we desired.

\vspace{2mm}
\bpf{hyp-total} Using the properties of the relative functionals, namely, \eqref{p-est1}, \eqref{p-quad} and \eqref{Phi-sim}, we apply \eqref{locE} to find that 
\[
\abs{\Ical_1(\vbar)} + \abs{\Ical_2(\vbar)}
\le C \int_\RR a' Q(\vbar|\vtil) d\x
\le C \int_\RR a' Q(v|\vtil) d\x
\le C \frac{\e^2}{\l}.
\]
Then, \eqref{hyp-total} directly follows from \eqref{bo1p}--\eqref{bo2}, \eqref{locE} and \eqref{l2-p}.
\end{proof}

\subsection{Estimates on the parabolic bad terms} \label{subsec:P}
This subsection is devoted to controlling the parabolic bad terms.
Although these terms can be controlled by a small portion of the good terms owing to the exponential decay and smallness of the shock derivative, handling large perturbations requires a highly delicate analysis.
To this end, we first introduce the following notation:
\begin{equation*} \label{para-decomp-1}
\Pcal_1 = B_3 + B_4 + B_5
\end{equation*}
where
\begin{align*}
B_3 &\coloneqq -\t_1\int_\RR a' v^\b(p(v)-p(\vtil))\rd_\x(p(v)-p(\vtil)) d\x, \\
B_4 &\coloneqq -\t_1\int_\RR a' (p(v)-p(\vtil))(v^\b-\vtil^\b)\rd_\x p(\vtil) d\x, \\
B_5 &\coloneqq -\t_1\int_\RR a \rd_\x(p(v)-p(\vtil))(v^\b-\vtil^\b)\rd_\x p(\vtil) d\x.
\end{align*}
Recall that \(\b\coloneqq\g-\a\in[0,1]\).
In addition, we introduce the following notation:
\begin{equation*} \label{para-decomp-2}
\Pcal_2 = B_6 + B_7 + B_8
\end{equation*}
where
\begin{align*}
B_6 &\coloneqq -\t_2\g\int_\RR a' \frac{1}{v^{\a+1}} (h-\htil) \rd_\x(h-\htil) d\x, \\
B_7 &\coloneqq \t_2\g\int_\RR a(h-\htil)\Big(\frac{1}{v^{\a+1}}-\frac{1}{\vtil^{\a+1}}\Big) \htil'' d\x, \\
B_8 &\coloneqq \t_2\g\int_\RR a (h-\htil) \rd_\x\Big(\frac{1}{v^{\a+1}}-\frac{1}{\vtil^{\a+1}}\Big) \htil' d\x.
\end{align*}

The following proposition provides the controls of the parabolic bad terms in \(\Pcal_1\).
\begin{prop} \label{prop:para-1}
Under the same assumptions as Proposition \ref{prop:hyp}, there exists a positive constant $C^*$ such that 
\begin{align}
&\hspace{10mm}\abs{B_3(U)} \le C\l\Dcal_p(U) + C\e\Gcal_p(U),
\label{para-B3}\\
&\hspace{10mm}\abs{B_4(U)} \le C\frac{\e^3}{\l}\Dcal_p(U) + C\e^2\Gcal_p(U),
\label{para-B4}\\
&\hspace{10mm}\abs{B_5(U)} \le 
C\frac{\e^2}{\l^2}\Dcal_p(U) + C\e\l\Gcal_p(U),
\label{para-B5}\\
\abs{\Pcal_1(U)} &\le
C \Big(\l + \frac{\e^2}{\l^2}\Big)\Dcal_p(U) + C\e\Gcal_p(U)
\le C^*\frac{\e^2}{\l} + C \Big(\l + \frac{\e^2}{\l^2}\Big)\Dcal_p(U).
\label{para-1-total}
\end{align}
\end{prop}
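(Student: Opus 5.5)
We estimate $B_3$, $B_4$, $B_5$ separately, following the pattern of Proposition \ref{prop:hyp}: split each integral into the region $\{\abs{p(v)-p(\vtil)}\le\d_3\}$, where $v$ is bounded above and below, and its complement. On the complement we use the truncations $\vbar_s,\vbar_b$ (and their $\d_3/2$ versions), the pointwise estimates of Lemma \ref{lem-pw}, and the integration tools Lemma \ref{lemma_pushing} and \eqref{vp2-c}. The total bound \eqref{para-1-total} then follows by summing the three estimates. The second inequality in \eqref{para-1-total} comes from $\Gcal_p\le C\e^2/\l$ (by \eqref{locE} and $\s_\e\sim1$), which gives $C\e\Gcal_p\le C\e^3/\l\le C^*\e^2/\l$.

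\emph{$B_3$.} We apply Cauchy--Schwarz with weight $a v^\b$:
\[
\abs{B_3}\le C\Big(\int a\,v^\b\abs{\rd_\x(p(v)-p(\vtil))}^2\Big)^{1/2}\Big(\int \frac{(a')^2}{a}v^\b (p(v)-p(\vtil))^2\Big)^{1/2}.
\]
By \eqref{der-scale}, $a'\le C\e\l$, so the second factor is at most $(C\e\l\int a' v^\b(p(v)-p(\vtil))^2)^{1/2}$. Where $\abs{p(v)-p(\vtil)}\le\d_3$, we have $v^\b(p(v)-p(\vtil))^2\le CQ(v|\vtil)$. Where $v$ is large (so $p(v)$ is small), $\b\le1$ and \eqref{Q-lin} give $v^\b\abs{p(v)-p(\vtil)}^2\le Cv\le CQ(v|\vtil)$. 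Where $v$ is small, $v^\b p(v)^2$ blows up, so there we insert $p(v)-p(\vtil)=(p(v)-p(\vbar_b))+(p(\vbar_b)-p(\vtil))$ and use $v^\b\le Cv^{\b}\le C$, together with \eqref{pwp3}, Lemma \ref{lemma_pushing} and \eqref{vp2-c}, to bound $\int a' v(p(v)-p(\vbar_b))^2$ by $C\frac{\e}{\l}\Dcal_p+C\frac{\e^4}{\l^2}\Gcal_p$. Since $v^\b(p-p_b)^2$ with $\b\le 1$ is comparable to $v(p-p_b)^2$ only up to a factor $v^{\b-1}\ge1$, this step needs the extra room provided by the condition $\g\le1+\a/3$ from \eqref{g-a-condition}. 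In total the second factor is $\le C\e\l(\Gcal_p+\frac{\e}{\l}\Dcal_p+\dots)$, and Young's inequality then gives $C\l\Dcal_p+C\e\Gcal_p$.

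\emph{$B_4$ and $B_5$.} Both carry the extra factor $\rd_\x p(\vtil)\sim\vtil'\sim\frac{\e}{\l}a'$. For $B_4$, the integrand is $a'\vtil'\,(p(v)-p(\vtil))(v^\b-\vtil^\b)$. Near $\vtil$ it is $\le C\e^2 a'Q(v|\vtil)$, which yields $C\e^2\Gcal_p$. Away from $\vtil$ we reuse the $J_{13}$ and $J_{312}$ type bounds multiplied by $\e^2$; this gives the $\frac{\e^3}{\l}\Dcal_p$ term. For $B_5$, we use Cauchy--Schwarz against $\Dcal_p$:
\[
\abs{B_5}\le \eta\Dcal_p+\frac{C}{\eta}\int \abs{\vtil'}^2 v^{-\b}(v^\b-\vtil^\b)^2.
\]
The last integral is bounded by $C\e^2\frac{\e}{\l}\Gcal_p$ near $\vtil$, and by the pointwise bounds \eqref{pwp1}--\eqref{pwp3} for large and small $v$. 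Choosing $\eta$ to balance the two pieces produces $C\frac{\e^2}{\l^2}\Dcal_p+C\e\l\Gcal_p$.

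\emph{Main obstacle.} The main difficulty is the small-$v$ region in $B_3$ and $B_5$. There the weights $v^\b$ or $v^{-\b}$ do not match the $v(p(v)-p(\vbar_b))^2$ structure controlled by \eqref{pwp3}. The first thing to try is the nonlinearization trick \eqref{NL}--\eqref{powerup}, which converts linear or weighted powers into $Cv(p(v)-p(\vbar_b))^2$ at the cost of constants depending on $\d_3$. After that, the exponent restrictions \eqref{g-a-condition} should absorb the remaining powers of $v$.
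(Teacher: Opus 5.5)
Your bounds for $B_4$ and $B_5$, and the second inequality in \eqref{para-1-total}, are fine. However, the small-$v$ part of $B_3$ does not close as proposed.

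After Cauchy--Schwarz you need to bound $\int_\RR a' v^{\b}\abs{p(v)-p(\vbar_b)}^2 d\x$. The tools you cite, \eqref{pwp3} and \eqref{vp2-c}, only control $\int_\RR a' v\abs{p(v)-p(\vbar_b)}^2 d\x$. Where $p(v)$ is large, $v^{\b}\abs{p(v)-p(\vbar_b)}^2\sim v^{-(\g+\a)}$ while $v\abs{p(v)-p(\vbar_b)}^2\sim v^{1-2\g}$. Their ratio $v^{\b-1}$ is unbounded as $v\to0$ whenever $\b<1$, which holds in every admissible case except $\a=0$, $\g=1$.

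The condition $\g\le 1+\frac{\a}{3}$ gives no ``extra room'' here. It means $\b\le 1-\frac{2\a}{3}$, which only makes the mismatch worse; in the paper it is used in Lemma \ref{lem-h-con}, to get $2\g+\b-2\le 1$. The nonlinearization \eqref{NL}--\eqref{powerup} does not help either. It dominates a lower power of $p(v)-p(\vbar_b)$ by $v$ times a higher power, whereas here you must dominate a more singular weight by a less singular one.

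The missing ingredient is a pointwise bound whose weight matches $\Dcal_p$, namely \eqref{pwp4}; this is exactly what the paper's proof relies on. It is proved like \eqref{pwp2}:
\begin{itemize}
\item On $\{p(v)-p(\vtil)>\d_3\}$, use $v_\x=-\frac{1}{\g}v^{\g+1}\rd_\x p(v)$ to write $\rd_\x\big(v^{\b/2}(p(v)-p(\vbar_b))\big)=v^{\b/2}\rd_\x(p(v)-p(\vtil))-\frac{\b}{2\g}v^{\b/2}\frac{p(v)-p(\vbar_b)}{p(v)}\rd_\x p(v)$.
\item Bound $0\le \frac{p(v)-p(\vbar_b)}{p(v)}\le 1$ and split $\rd_\x p(v)=\rd_\x(p(v)-p(\vtil))+p(\vtil)'$.
\item Integrate from $\x_0$ and apply Cauchy--Schwarz together with \eqref{Qconst} and \eqref{locE}.
\end{itemize}
Squaring, multiplying by $a'$, and using Lemma \ref{lemma_pushing}, \eqref{Qconst} and \eqref{locE} then gives $\int_\RR a'v^{\b}\abs{p(v)-p(\vbar_b)}^2d\x\le C\frac{\e}{\l}\Dcal_p(U)+C\frac{\e^6}{\l^4}\Gcal_p(U)$. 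With this, your Young's-inequality step yields \eqref{para-B3}.

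By contrast, the small-$v$ region of $B_5$, which you also listed as an obstacle, is not one. There $v^{-\b}(v^\b-\vtil^\b)^2\le Cv^{-\b}$, and $v^{-\b}=v^{\g+\a-1}v^{1-2\g}\le Cv^{1-2\g}$ for bounded $v$ because $\g+\a\ge1$. So \eqref{powerup} and \eqref{vp2-c} suffice there, as you suggested.
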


\begin{proof}
Because of the shared structure with the barotropic NS system, the parabolic bad terms in \(\Pcal_1\) coincide with those that arise in the NS case.
Hence, using the pointwise estimate
\begin{equation} \label{pwp4}
\begin{aligned}
\abs{v^{\b/2} (p(v)-p(\vbar_b))(\x)}
&\le C\sqrt{\int_{\x_0}^\x Q(v|\vtil) d\z}\sqrt{\Dcal_p(U)}
+ C \sqrt{\frac{\e^4}{\l^3}} \sqrt{\Gcal_p(U)}, \quad \forall\,\x\in\RR,
\end{aligned}
\end{equation}
which can be established in the same way as \eqref{pwp2}, the argument follows as in \cite[Proposition 4.3]{KV-Inven21} and \cite[Proposition 4.4]{EEKO-ISO}.
Thus, we omit the details.
\end{proof}

The following proposition provides the controls of the parabolic bad terms in \(\Pcal_2\).
\begin{prop} \label{prop:para-2}
Under the same assumptions as Proposition \ref{prop:hyp}, there exists a positive constant $C^*$ such that
\begin{align}
\abs{B_6(U)} &\le
C\Big(\l\Dcal_h(U)+\frac{\e^2}{\l}\Dcal_p(U)\Big)
+ C \e\Big(\Gcal_h^-(U)+\Gcal_h^+(U)+\Gcal_p(U)\Big),
\label{para-B6}\\
\abs{B_7(U)} &\le C\frac{\e^3}{\l^2}\Big(\Dcal_p(U)+\Dcal_h(U)\Big)
+C\frac{\e^2}{\l}\Big(\Gcal_h^-(U)+\Gcal_h^+(U)+\Gcal_p(U)\Big),
\label{para-B7}\\
\abs{B_8(U)} &\le C\frac{\e}{\l} \Dcal_p(U) + C\e^2 \Big(\Gcal_h^-(U)+\Gcal_h^+(U)+\Gcal_p(U)\Big),
\label{para-B8}\\
\abs{\Pcal_2(U)} &\le
C\Big(\l\Dcal_h(U)+\frac{\e}{\l}\Dcal_p(U)\Big)
+ C \e\Big(\Gcal_h^-(U)+\Gcal_h^+(U)+\Gcal_p(U)\Big),
\label{para-2-total}\\
\abs{\Pcal_2(U)} &\le
C^*\frac{\e^2}{\l}
+C\Big(\l\Dcal_h(U)+\frac{\e}{\l}\Dcal_p(U)\Big).
\label{para-2-total-0}
\end{align}
\end{prop}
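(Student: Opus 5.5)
The plan is to bound each of \(B_6,B_7,B_8\) separately by Cauchy--Schwarz/Young, always pairing the factor \(\rd_\x(h-\htil)\) with \(\Dcal_h\) and the remaining factors with \(\Gcal_h^\pm,\Gcal_p\) or \(\Dcal_p\). We use \eqref{der-scale} (\(a'\sim\frac{\l}{\e}\vtil'\sim\frac{\l}{\e}|\htil'|\)), \eqref{2nd-der} (\(|\htil''|\le C\e\vtil'\)), and the pointwise estimates of Lemma \ref{lem-pw} together with \eqref{pwp4}. A preliminary observation is that \(\int a'(h-\htil)^2\le C(\Gcal_h^-+\Gcal_h^++\Gcal_p)\). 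Indeed, on \(\O\) we have \((h-\htil)^2\le 2(h-\htil-\frac{p(v)-p(\vtil)}{\s_\e})^2+C|p(v)-p(\vtil)|^2\). Moreover, by \eqref{p-quad} (or its large-\(v\) analogue) \(|p(v)-p(\vtil)|^2\one{\O}\le CQ(v|\vtil)+C\) times a set where \(p(v)\) is bounded, so it is controlled by \(Q(v|\vtil)\) there. Writing \(\Gcal_h:=\Gcal_h^-+\Gcal_h^+\), we get \(\int a'(h-\htil)^2\le C(\Gcal_h+\Gcal_p)\), and also \(\le C\e^2/\l\) by \eqref{locE}.

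For \(B_6\), Young's inequality gives
\[|B_6|\le \l\int a\,v^{-\a-1}|\rd_\x(h-\htil)|^2+\frac{C}{\l}\int \frac{(a')^2}{a}v^{-\a-1}(h-\htil)^2 .\]
The first term is \(C\l\Dcal_h\). For the second, \((a')^2/\l\le C\e\,a'\) by \eqref{der-scale}. Where \(v\) is bounded below (\(p(v)-p(\vtil)\le\d_3\)) this yields \(C\e(\Gcal_h+\Gcal_p)\). On \(\O^c\) (small \(v\)) the weight \(v^{-\a-1}\) blows up, and we handle it as follows. We bound \(v^{-\a-1}\le C(1+|p(v)-p(\vbar_b)|^{(\a+1)/\g})\), and control \(\sup_{\O^c}|h-\htil|\) via \(\Dcal_h\). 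Since \(v^{-\a-1}\) is the diffusion weight, \(|\rd_\x(h-\htil)|\le v^{(\a+1)/2}\) times an \(L^2\) function, and \(v\) is bounded on \(\O^c\), so \(h-\htil\) is H\"older-controlled by \(\sqrt{\Dcal_h}\) from a point \(\x_0\) where it is small (chosen as in Step 1 of Lemma \ref{lem-pw} using \eqref{locE}). Combining this with \eqref{pwp3} and Lemma \ref{lemma_pushing} converts the bad region into the \(\frac{\e^2}{\l}\Dcal_p\) contribution, exactly as in \eqref{vp2-c}. We expect this step, controlling \(v^{-\a-1}\) on \(\{v\ll1\}\) using only \(\a\le\g\), to be the main obstacle. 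Our first attempt there will be to use the power-up trick \eqref{powerup}--\eqref{NL-J3} with \((\vbar_b)_{\d_3/2}\), rewriting \(v^{-\a-1}\one{\O^c}\le Cv^{\b}|p(v)-p((\vbar_b)_{\d_3/2})|^{2}\cdot(\cdots)\). Here we use \(\b=\g-\a\ge0\) and \(\g\le1+\a/3\) to make the exponents compatible.

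For \(B_7\), we use \(|v^{-\a-1}-\vtil^{-\a-1}|\le C|p(v)-p(\vtil)|\) where \(v\) is bounded below, together with \(|\htil''|\le C\e\vtil'\le C\frac{\e^2}{\l}a'\). Cauchy--Schwarz then gives \(C\frac{\e^2}{\l}\int a'(|h-\htil|^2+Q(v|\vtil))\), and on the region \(p(v)\le p(\vtil)+\d_3\) we get \(C\frac{\e^2}{\l}(\Gcal_h+\Gcal_p)\). On \(\O^c\) we split off \(v^{-\a-1}\) as in \(B_6\), gaining the extra \(\e\) factor from \(\htil''\); this produces \(\frac{\e^3}{\l^2}(\Dcal_p+\Dcal_h)\).

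For \(B_8\), we write \(\rd_\x(v^{-\a-1}-\vtil^{-\a-1})\) in terms of \(\rd_\x(p(v)-p(\vtil))\) times \(v^{\g-\a-1}\) plus \((v^{\g-\a-1}-\vtil^{\g-\a-1})p(\vtil)'\). The first piece, by Young's inequality with weight \(v^{\b}\), is bounded by \(\frac{\e}{\l}\Dcal_p+\frac{\l}{\e}\int a(h-\htil)^2v^{\g-\a-2}|\htil'|^2\). Since \(|\htil'|^2\le C\e^2\frac{\e}{\l}a'\), the latter is \(\le C\e^2(\Gcal_h+\Gcal_p)\), again with the small-\(v\) region treated via \eqref{pwp4}. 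The second piece is like \(B_7\).

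Summing the three bounds gives \eqref{para-2-total}, since \(\frac{\e^3}{\l^2}\le\l\), \(\frac{\e^2}{\l}\le\frac{\e}{\l}\), and \(\frac{\e^2}{\l}\le\e\). Finally, \eqref{para-2-total-0} follows from \eqref{locE} and \eqref{l2-p}: these give \(\Gcal_h+\Gcal_p\le C\e^2/\l\), and multiplying by \(\e\) is even better than needed.
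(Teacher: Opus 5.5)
You correctly handle the region $\O$, where $v$ is bounded below. Your preliminary bound $\int a'(h-\htil)^2\le C(\Gcal_h^-+\Gcal_h^++\Gcal_p)$ is right, since $|p(v)-p(\vtil)|^2\mathbf{1}_{\O}\le CQ(v|\vtil)$ by \eqref{p-quad} and \eqref{Qconst}, and it is slightly sharper than \eqref{u-aux}. Your final summation and your derivation of \eqref{para-2-total-0} are also fine.

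The gap is the step you yourself flag: the region $\O^c$, where $v^{-\a-1}$ and $v^{\b-2}$ blow up. None of the mechanisms you name close it.
\begin{itemize}
\item If you put the supremum on $h-\htil$ and control it by $\Dcal_h$, you are left with $\frac{1}{\l}\int_{\O^c}(a')^2v^{-\a-1}$. This cannot be bounded by $Q(v|\vtil)$, because $v^{-\a-1}\gg Q(v|\vtil)$ as $v\to0$, so only $\Dcal_p$ controls it. You end up with a product $\Dcal_h\Dcal_p$, which the linear right-hand side of \eqref{para-B6} does not dominate. No a priori bound on either dissipation is assumed here, and \eqref{para-2-total-0} is used in Step 1 of Section \ref{sec:proof} precisely when $\Dcal_p$ is large.
\item The estimate $|(h-\htil)(\x)|\le|(h-\htil)(\x_0)|+\sqrt{\Dcal_h}\,(\int_{\x_0}^{\x}v^{\a+1}d\z)^{1/2}$ is not uniform in $\x$. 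The integral runs through regions where $v$ is not small and grows like $|\x-\x_0|$; this is why the paper truncates $h$ and proves \eqref{pwh}.
\item \eqref{pwp3} has the wrong weight. Bounding $v^{-\a-1}\mathbf{1}_{\O^c}\le Cv\,p(v)^2=Cv^{1-2\g}$ would need $\g\ge1+\a/2$, which is incompatible with $\g\le1+\a/3$ unless $\a=0$. The correct dominating quantity is $v^{\b}p(v)^2=v^{-(\g+\a)}$, i.e.\ \eqref{pwp4}.
\item Lemma \ref{lemma_pushing} cannot absorb the variable factor $(h-\htil)^2$ in the integrand.
\end{itemize}

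The missing idea is a uniform bound on the singular weight multiplied by $a'$. On $\O^c$ one has $v^{-(\g+\a)/2}=v^{\b/2}p(v)\le v^{\b/2}(p(v)-p(\vbar_b))+C$. Squaring \eqref{pwp4} and applying Lemma \ref{lemma_Linfty} (not Lemma \ref{lemma_pushing}) together with \eqref{locE} gives
\[ a'(\x)\,v^{q}(\x)\le C\frac{\e^2}{\l}\Dcal_p(U)+C\e\l \qquad\text{for all }\x\in\RR,\ q\in[-(\g+\a),0]. \]
\begin{itemize}
\item For $B_6$, pull $\norm{a'v^{-\a-1}}_{L^\infty}$ out of $\frac{C}{\l}\int(a')^2v^{-\a-1}(h-\htil)^2$. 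The resulting product $\Dcal_p\int a'(h-\htil)^2$ is closed by \eqref{locE}, giving $\frac{\e^4}{\l^3}\Dcal_p$; $\Dcal_h$ is not needed.
\item The same works for $B_8$ with $q=\b-2$ and $q=\b-1$, provided you keep $|\htil'|^2\le C\frac{\e^2}{\l^2}(a')^2$. Your splitting $|\htil'|^2\le C\e^2\frac{\e}{\l}a'$ spends the second factor $a'$, which is exactly what makes the singular weight bounded.
\item For $B_7$ on $\O^c$ even this is not enough. Cauchy--Schwarz produces $v^{-2\a-2}$, which lies outside the admissible range $q\ge-(\g+\a)$. The paper instead splits $v^{-\a-1}=v^{\b/2-1}\cdot v^{\b/2}p(v)$ and combines the truncated-$h$ estimate \eqref{pwh} with \eqref{pwp4}, Lemma \ref{lemma_pushing} and \eqref{Q-lin}. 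It needs $\g\le5/4$ so that $v^{4\g-4}\le CQ(v|\vtil)$ for large $v$. Your instruction to ``split off $v^{-\a-1}$ as in $B_6$'' does not anticipate any of this.
\end{itemize}
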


To prove the proposition above, it is necessary to define the truncation of \(h\)-variable:
\begin{equation} \label{ubar}
\hbar-\htil = \psi_{\d_3}(h-\htil).
\end{equation}
Notice that the size of truncation does not have to be \(\d_3\) and it is unnecessary for \(h\)-variable to introduce one-sided truncations \(\hbar_s\) and \(\hbar_b\).
(Moreover, \(\Ubar\) still stands for \((\vbar,h)\), not \((\vbar,\hbar)\).)
We also have
\[
\Dcal_h(U) = \t_2\g\int_\RR a \frac{1}{v^{\a+1}} |\rd_\x (h-\htil)|^2 d\x
\ge \t_2\g\int_\RR a \frac{1}{v^{\a+1}} |\rd_\x (h-\hbar)|^2 d\x,
\]
but it does not holds that
\[
\Dcal_h(U) \ge \t_2\g\int_\RR a \frac{1}{\vbar^{\a+1}}\abs{\rd_\x (h-\hbar)}^2 d\x.
\]
This is the reason why we cannot use \(\Dcal_h(U)\) in Proposition \ref{prop:main3}, as discussed earlier.

We now state the following lemma, which provides estimates that are needed in the proof of Proposition \ref{prop:para-2}.

\begin{lemma} \label{lem-h-con}
Under the same assumptions as Proposition \ref{prop:hyp}, there exists \(\x_0\in[-1/\e,1/\e]\) such that for any \(\x\in\RR\),
\begin{equation} \label{pwh}
\begin{aligned}
&\abs{v^{\frac{\b}{2}-1}(h-\hbar)(\x)} 
\le \sqrt{\Dcal_h(U)}\sqrt{\int_{\x_0}^\x v^{\g-1}\one{|h-\htil|>\d_3} d\z} \\
&+ C\sqrt{\Dcal_p(U)}\sqrt{\int_{\x_0}^\x v^{2\g-2} |h-\hbar| d\z}
+ C\sqrt{\frac{\e^4}{\l^3}}\Big(\sqrt{\int_\RR a'(h-\htil)^2 d\x} + \sqrt{\Gcal_p(U)}\Big).
\end{aligned}
\end{equation}
Moreover, we have
\begin{equation} \label{u-aux}
\int_\RR a'(h-\htil)^2 d\x
\le C\big(\Gcal_h^-(U)+\Gcal_h^+(U)+\Gcal_p(U)+\frac{\e}{\l}\Dcal_p(U)\big).
\end{equation}
\end{lemma}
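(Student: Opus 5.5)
Both estimates will be obtained by mimicking the proof of Lemma \ref{lem-pw}. We take the same point \(\x_0\in[-1/\e,1/\e]\) from Step 1 there, but now require in addition that \((h-\htil)(\x_0)\) is small. Using \eqref{locE}, \eqref{lower-v} and \eqref{der-scale}, we get
\[
2\e\int_{-1/\e}^{1/\e}\big((h-\htil)^2+Q(v|\vtil)\big)d\x\le C(\e/\l)^2 .
\]
Hence there is an \(\x_0\) in this interval with \(|h-\htil|(\x_0)\le C\e/\l\le\d_3/2\), and also \(|p(v)-p(\vtil)|(\x_0)\le\d_3/2\). Therefore \((h-\hbar)(\x_0)=0\) and \((p(v)-p(\vbar))(\x_0)=0\).

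\textbf{Pointwise estimate \eqref{pwh}.} Write \(g:=v^{\frac{\b}{2}-1}(h-\hbar)\), so that \(g(\x_0)=0\) and \(g(\x)=\int_{\x_0}^\x\rd_\z g\,d\z\). The product rule gives two pieces.

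The first piece is \(v^{\frac{\b}{2}-1}\rd_\z(h-\hbar)\). It is supported on \(\{|h-\htil|>\d_3\}\), where \(\rd_\z(h-\hbar)=\rd_\z(h-\htil)\). We write it as \(v^{-\frac{\a+1}{2}}\rd_\z(h-\htil)\cdot v^{\frac{\b}{2}-1+\frac{\a+1}{2}}\), note that \(\frac{\b}{2}-1+\frac{\a+1}{2}=\frac{\g-1}{2}\), and apply Cauchy--Schwarz (using \(a\ge1\) and the normalization \(\t_2\g\) absorbed into \(\Dcal_h\)). This produces exactly \(\sqrt{\Dcal_h}\sqrt{\int v^{\g-1}\one{|h-\htil|>\d_3}}\).

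The second piece is \((\frac{\b}{2}-1)v^{\frac{\b}{2}-2}v_\z(h-\hbar)\). We express \(v_\z=p(v)_\z/p'(v)\), so that \(v^{-2}v_\z\sim v^{\g-1}p(v)_\z\), and then split \(p(v)_\z=\rd_\z(p(v)-p(\vtil))+p(\vtil)'\).
\begin{itemize}
\item For the part with \(\rd_\z(p(v)-p(\vtil))\), the integrand is \(v^{\b/2}\rd_\z(p(v)-p(\vtil))\cdot v^{\g-1}(h-\hbar)\). Cauchy--Schwarz controls it by \(\sqrt{\Dcal_p}\sqrt{\int v^{2\g-2}(h-\hbar)^2}\). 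We then use \(|h-\hbar|^2\le C|h-\hbar|\cdot|h-\htil|\), or simply keep the weight as written, to reach the stated \(\sqrt{\int v^{2\g-2}|h-\hbar|}\) after adjusting powers. I expect this power bookkeeping to need care; if the square does not reduce to the first power, I would invoke the nonlinearization trick \eqref{NL} in reverse.
\item For the part with \(p(\vtil)'\), the integral is bounded by \(C\int|\vtil'|\,v^{\frac{\b}{2}-2+\g+1}|h-\hbar|\). On the support of \(h-\hbar\) we have \(|h-\hbar|\le|h-\htil|\le|h-\htil|^2/\d_3\). When \(v\) is bounded, Cauchy--Schwarz with \(|\vtil'|\sim(\e/\l)a'\) gives \(\sqrt{\int|\vtil'|}\sqrt{(\e/\l)\int a'(h-\htil)^2}\). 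Combined with the smallness of the support obtained from \eqref{Qconst} and \eqref{locE}, this produces a factor \(\sqrt{\e^4/\l^3}\) as in \eqref{pw-b}. Regions where \(v\) is large are handled by \eqref{Q-lin}, which gives the \(\sqrt{\Gcal_p}\) contribution.
\end{itemize}

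\textbf{Estimate \eqref{u-aux}.} We split \(\RR\) into \(\O^c\) and \(\O\). On \(\O^c\) the integral \(\frac{\s_\e}{2}\int_{\O^c} a'(h-\htil)^2\) is exactly \(\Gcal_h^-\). On \(\O\) we write \(h-\htil=\big(h-\htil-\tfrac{p(v)-p(\vtil)}{\s_\e}\big)+\tfrac{p(v)-p(\vtil)}{\s_\e}\). The first part is bounded by \(C\Gcal_h^+\). The second part is \(C\int_\O a'|p(v)-p(\vtil)|^2=C\,B_1^+\), and \eqref{bo1p} together with \(\Ical_1(\vbar)\le C\Gcal_p(\Ubar)\le C\Gcal_p(U)\) (via \eqref{p-quad} and \eqref{Phi-sim}) bounds it by \(C\Gcal_p+C\frac{\e}{\l}\Dcal_p\).

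The main obstacle will be the \(p(\vtil)'\) term in the second piece of \eqref{pwh}. There, arbitrary powers of \(v\) and the absence of truncation in \(v\) must be reconciled with the weights \(a'\) and the support size. I would first try Lemma \ref{lemma_Linfty} to absorb \(\vtil'\), and then \eqref{u-aux} to close the estimate.
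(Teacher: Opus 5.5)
Your route is the paper's. You use the same $\x_0$ (making both $p(v)-p(\vbar)$ and $h-\hbar$ vanish there is fine), the fundamental theorem of calculus for $v^{\frac{\b}{2}-1}(h-\hbar)$, Cauchy--Schwarz against $\Dcal_h$ on $\{|h-\htil|>\d_3\}$, and the splitting $p(v)_\z=\rd_\z(p(v)-p(\vtil))+p(\vtil)'$. For \eqref{u-aux} you split into $\O^c$ and $\O$; invoking \eqref{bo1p} together with $\Ical_1(\vbar)\le C\Gcal_p(U)$ on $\O$ is a valid repackaging of the paper's direct use of \eqref{pwp1} and Lemma \ref{lemma_pushing}.

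\textbf{First gap: the $p(\vtil)'$ term.} You call this the main obstacle, but the powers of $v$ there are not arbitrary. By Young's inequality and $|p(\vtil)'|\le C\frac{\e}{\l}a'$,
\[
\int_\RR v^{\g+\frac{\b}{2}-1}|h-\hbar|\,|p(\vtil)'|\,d\z\le C\frac{\e}{\l}\int_\RR a'(h-\htil)^2d\x+C\frac{\e}{\l}\int_\RR a'v^{3\g-\a-2}\one{|h-\htil|>\d_3}d\x .
\]
Now \eqref{g-a-condition} gives $0\le 2\g-2\le 3\g-\a-2\le 1$, and the upper bound is exactly $\g\le 1+\frac{\a}{3}$. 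Hence $v^{3\g-\a-2}\le 1+v$. Combining \eqref{Q-lin} with $\one{|h-\htil|>\d_3}\le(h-\htil)^2/\d_3^2$ then gives
\[
v^{3\g-\a-2}\one{|h-\htil|>\d_3}\le C\big((h-\htil)^2+Q(v|\vtil)\big).
\]
Set $X=\int_\RR a'\big((h-\htil)^2+Q(v|\vtil)\big)d\x$. By \eqref{locE} you may write $X\le C\sqrt{\e^2/\l}\,\sqrt{X}$, and this produces the $C\sqrt{\e^4/\l^3}(\cdots)$ term. Without this exponent check, your appeal to \eqref{Q-lin} for large $v$, and the boundedness for small $v$, are unjustified. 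Lemma \ref{lemma_Linfty} and \eqref{u-aux} play no role in this step.

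\textbf{Second gap: the square in the $\Dcal_p$ term.} Cauchy--Schwarz gives $\sqrt{\Dcal_p}\sqrt{\int_{\x_0}^\x v^{2\g-2}(h-\hbar)^2d\z}$. Neither of your repairs lowers the square to the first power in \eqref{pwh}:
\begin{itemize}
\item the bound $|h-\hbar|^2\le|h-\hbar|\,|h-\htil|$ leaves an unbounded factor;
\item \eqref{NL} cannot be run backwards. It converts $(y-\d_3)_+$ into a square using a \emph{lower} bound on its support, whereas passing from $(h-\hbar)^2$ to $|h-\hbar|$ would need an \emph{upper} bound on $|h-\hbar|$, which you do not have.
\end{itemize}
The paper's own computation arrives at exactly this squared term and then asserts \eqref{pwh}. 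So what both arguments actually establish is \eqref{pwh} with $(h-\hbar)^2$ in place of $|h-\hbar|$. Record it in that form rather than trying to force the first power.
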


\begin{proof}
By the same argument as in Lemma \ref{lem-pw}, we choose \(\x_0\in[-1/\e,1/\e]\) such that 
\[
\abs{(h-\hbar)(\x_0)}=0.
\]
Moreover, the same principle as in \eqref{Qconst} implies
\begin{equation} \label{Qconst-h}
\one{|h-\htil|>\d_3}=\one{|h-\hbar|>0} \le \frac{(h-\htil)^2}{\a'},
\end{equation}
for some constant \(\a'>0\) depending on \(\d_3\).
Then, using the fundamental theorem of calculus, H\"older's inequality, Young's inequality and \eqref{locE}, we find that for any \(\x\in\RR\),
\begin{align*}
\abs{v^{\frac{\b}{2}-1}(h-\hbar)(\x)}
&\le \int_{\x_0}^\x v^{\frac{\b}{2}-1} |(h-\hbar)_\z| d\z
+C\int_{\x_0}^\x v^{\g+\frac{\b}{2}-1} \abs{(p(v)-p(\vtil))_\z} |h-\hbar| d\z\\
&\qquad
+C\int_{\x_0}^\x v^{\g+\frac{\b}{2}-1} |h-\hbar| \abs{p(\vtil)'} d\z\\
&\le \sqrt{\Dcal_h(U)} \sqrt{\int_{\x_0}^\x v^{\g-1}\one{|h-\htil|>\d_3}d\z}
+\sqrt{\Dcal_p(U)} \sqrt{\int_{\x_0}^\x v^{2\g-2}(h-\hbar)^2 d\z}\\
&\qquad
+C\sqrt{\frac{\e^4}{\l^3}}\sqrt{\int_\RR a'(h-\htil)^2 d\x}
+C\frac{\e}{\l}\int_\RR a' v^{2\g+\b-2}\one{|h-\htil|>\d_3}d\x.
\end{align*}
Then, since \(\g\le1+\frac{\a}{3}\) and \(2\g+\b-2 \le 1\), we apply \eqref{Q-lin}, \eqref{Qconst-h} and \eqref{locE} to find that
\begin{align*}
\int_\RR a' v^{2\g+\b-2}\one{|h-\htil|>\d_3}d\x
&\le C \int_\RR a' \big(Q(v|\vtil)+(h-\htil)^2\big) d\x \\
&\le C \sqrt{\frac{\e^2}{\l}}\sqrt{\int_\RR a' \big(Q(v|\vtil)+(h-\htil)^2\big) d\x}.
\end{align*}
Thus, summing up all, \eqref{pwh} can be established.

To prove \eqref{u-aux}, we first recall that forms of the hyperbolic good terms are different on the sets \(\O\) and \(\O^c\).
Then, Young's inequality implies 
\begin{align*}
\int_\O a'(h-\htil)^2 d\x
&\le C\Gcal_h^+(U) + C\int_\O a'(p(v)-p(\vtil))^2 d\x \\
&\le C\Gcal_h^+(U) + C\int_\O a'(p(v)-p(\vbar_s))^2 d\x
+C\int_\O a'(p(\vbar_s)-p(\vtil))^2 d\x.
\end{align*}
Then, the locally quadratic structure of the relative entropy yields that
\[
\int_\O a'(p(\vbar_s)-p(\vtil))^2 d\x \le C\int_\RR a'Q(\vbar|\vtil)d\x
\le C\int_\RR a'Q(v|\vtil)d\x \le C\Gcal_p(U).
\]
Moreover, we apply \eqref{pwp1} with Lemma \ref{lemma_pushing} and use \eqref{Qconst} and \eqref{locE} to control the second term on the right-hand side: 
\[
\int_\O a'(p(v)-p(\vbar_s))^2 d\x
\le C \Dcal_p(U) \int_\RR a' \int_{\x_0}^\x Q(v|\vtil) d\z d\x
\le C\frac{\e}{\l}\Dcal_p(U).
\]
Combining the above estimates, we get \eqref{u-aux}.
\end{proof}

Now we are ready to prove Proposition \ref{prop:para-2}.

\begin{proof}[Proof of Proposition \ref{prop:para-2}]
The initial step is to derive an \(L^\infty\) bound for \(v^{-\g-\a}\).
To this end, using \eqref{pwp4} and \eqref{locE}, we observe that
\begin{align*}
v^{-\frac{\g+\a}{2}}(\x)
&=v^{\frac{\b}{2}}p(v)(\one{p(v)-p(\vtil)>\d_3}+\one{p(v)-p(\vtil)\le\d_3})(\x)\\
&\le v^{\frac{\b}{2}}(p(v)-p(\vbar_b))(\x)+C
\le C\sqrt{\Dcal_p(U)}\sqrt{\int_{\x_0}^\x Q(v|\vtil)d\z} + C.
\end{align*}
Then, from Lemma \ref{lemma_Linfty} with \eqref{Qconst} and \eqref{locE}, we get 
\begin{equation} \label{Linfty-O}
a'(\x) v^{-(\g+\a)}(\x) \le C\Dcal_p(U)\int_\RR a' Q(v|\vtil)d\x + C\e\l
\le C\frac{\e^2}{\l}\Dcal_p(U) + C\e\l.
\end{equation}
Moreover, in the sense of interpolation, we have 
\begin{equation} \label{Linfty-G}
a'(\x) v^{q}(\x)
\le C\frac{\e^2}{\l}\Dcal_p(U) + C\e\l, \quad \forall \, q\in [-(\g+\a),0].
\end{equation}
Note that \eqref{Linfty-O} and \eqref{Linfty-G} will be useful later.

\vspace{2mm}
\bpf{para-B6}
First of all, we apply Young's inequality as follows:
\[
\abs{B_6(U)}
\le C\l \int_\RR a \frac{1}{v^{\a+1}} (\rd_\x(h-\htil))^2 d\x
+\frac{C}{\l} \int_\RR |a'|^2 \frac{1}{v^{\a+1}} (h-\htil)^2 d\x.
\]
Then, since \(-(\a+1) \in [-(\g+\a),0]\), we apply \eqref{Linfty-G} with \eqref{locE} and \eqref{u-aux} to find that 
\begin{align*}
\frac{C}{\l} \int_\RR |a'|^2 \frac{1}{v^{\a+1}} (h-\htil)^2 d\x
&\le C\frac{\e^4}{\l^3}\Dcal_p(U)
+ C \e \int_\RR a' (h-\htil)^2 d\x\\
&\le C\frac{\e^2}{\l}\Dcal_p(U)
+ C \e\Big(\Gcal_h^-(U)+\Gcal_h^+(U)+\Gcal_p(U)\Big).
\end{align*}
Thus, \eqref{para-B6} is proved.

\vspace{2mm}
\bpf{para-B7}
We consider the following decomposition:
\begin{align*}
B_7(U)&= \t_2\g\int_\RR a(h-\htil)\Big(\frac{1}{v^{\a+1}}-\frac{1}{\vtil^{\a+1}}\Big) \htil'' 
\one{p(v)-p(\vtil)>\d_3}d\x\\
&\qquad+ \t_2\g\int_\RR a(h-\htil)\Big(\frac{1}{v^{\a+1}}-\frac{1}{\vtil^{\a+1}}\Big) \htil'' 
\one{\abs{p(v)-p(\vtil)}\le\d_3}d\x\\
&\qquad+ \t_2\g\int_\RR a(h-\htil)\Big(\frac{1}{v^{\a+1}}-\frac{1}{\vtil^{\a+1}}\Big) \htil'' 
\one{p(v)-p(\vtil)<-\d_3}d\x
\eqqcolon J_{41}+J_{42}+J_{43}.
\end{align*}
To handle \(J_{41}\), we observe
\begin{align*}
&\abs{(h-\htil)\Big(\frac{1}{v^{\a+1}}-\frac{1}{\vtil^{\a+1}}\Big)}\one{p(v)-p(\vtil)>\d_3}
\le  
\frac{C}{v^{\a+1}}(|h-\hbar|+|\hbar-\htil|)\one{p(v)-p(\vtil)>\d_3}\\
&\le C|h-\hbar|v^{\frac{\b}{2}-1}v^\frac{\b}{2}p(v)\one{p(v)-p(\vtil)>\d_3}
+\frac{C}{v^{\a+1}}\one{p(v)-p(\vtil)>\d_3}
\eqqcolon K_1+K_2.
\end{align*}
Then, we have
\begin{align*}
K_1 &\le C|h-\hbar|v^{\frac{\b}{2}-1}v^\frac{\b}{2}(p(v)-p(\vbar_b))\one{p(v)-p(\vtil)>\d_3}
+C|h-\hbar|v^{\frac{\b}{2}-1}v^\frac{\b}{2}p(\vbar_b)\one{p(v)-p(\vtil)>\d_3}\\
&\le C(v^{\frac{\b}{2}-1}(h-\hbar))^2\one{p(v)-p(\vtil)>\d_3}
+C(v^\frac{\b}{2}(p(v)-p(\vbar_b)))^2
+C\one{p(v)-p(\vtil)>\d_3}.
\end{align*}
Moreover, it holds that
\begin{align*}
K_2 
&\le C(v^\frac{\b}{2}(p(v)))^2 \one{p(v)-p(\vtil)>\d_3}
\le C(v^\frac{\b}{2}(p(v)-p(\vbar_b)))^2 
+C\one{p(v)-p(\vtil)>\d_3}.
\end{align*}
Thanks to \eqref{2nd-der} and \eqref{der-scale}, we have \(|\htil''| \le C\frac{\e^2}{\l}a'\) and thus we gather all the inequalities above to find that
\begin{align*}
\abs{J_{41}} &\le 
C\frac{\e^2}{\l} \int_\RR a' \one{p(v)-p(\vtil)>\d_3} d\x
+C\frac{\e^2}{\l}\int_\RR a' (v^\frac{\b}{2}(p(v)-p(\vbar_b)))^2 d\x\\
&\qquad
+C\frac{\e^2}{\l}\int_\RR a' (v^{\frac{\b}{2}-1}(h-\hbar))^2\one{p(v)-p(\vtil)>\d_3} d\x
\eqqcolon J_{411}+J_{412}+J_{413}.
\end{align*}
We analyze the right-hand side term-by-term basis.
For \(J_{11}\), we make use of \eqref{Qconst} to obtain 
\[
J_{411}
\le C\frac{\e^2}{\l}\Gcal_p(U).
\]
Thanks to \eqref{pwp4}, we use Lemma \ref{lemma_pushing} with \eqref{locE} and \eqref{Qconst} to control \(J_{412}\) as
\begin{align*}
J_{412}
&\le C\frac{\e^2}{\l}\Dcal_p(U)\int_\RR a' \int_{\x_0}^\x Q(v|\vtil) d\z d\x
+C\frac{\e^6}{\l^4}\Gcal_p(U)\int_\RR a' \one{p(v)-p(\vtil)>\d_3} d\x\\
&\le C\frac{\e^3}{\l^2}\Dcal_p(U)+C\frac{\e^8}{\l^5}\Gcal_p(U).
\end{align*}
For \(J_{413}\), we apply \eqref{pwh}, Lemma \ref{lemma_pushing}, \eqref{locE}, \eqref{Qconst} and \eqref{Q-lin} to obtain
\begin{align*}
&J_{413}
\le C\frac{\e^2}{\l}\Dcal_h(U)\int_\RR a' \int_{\x_0}^\x v^{\g-1}\one{|h-\htil|>\d_3}d\z d\x
+C\frac{\e^2}{\l}\Dcal_p(U)\int_\RR a' \int_{\x_0}^\x v^{2\g-2}(h-\hbar)d\z d\x\\
&\qquad
+C\frac{\e^6}{\l^4} \int_\RR a' \one{p(v)-p(\vtil)>\d_3} d\x \cdot \int_\RR a'(h-\htil)^2 d\x
+C\frac{\e^6}{\l^4} \Gcal_p(U) \int_\RR a' \one{p(v)-p(\vtil)>\d_3} d\x\\
&\le C\frac{\e}{\l}\Dcal_h(U)\int_\RR a' v^{\g-1}\one{|h-\htil|>\d_3} d\x
+C\frac{\e}{\l}\Dcal_p(U)\int_\RR a' v^{2\g-2}(h-\hbar) d\x
+C\frac{\e^8}{\l^5} \Gcal_p(U)\\
&\le C\frac{\e}{\l}\Dcal_h(U) \int_\RR (Q(v|\vtil)+|h-\htil|^2) d\x
+C\frac{\e}{\l}\Dcal_p(U) \int_\RR a' v^{4\g-4} \one{|h-\htil|>\d_3} d\x
+C\frac{\e^8}{\l^5} \Gcal_p(U)\\
&\le C\frac{\e^3}{\l^2}(\Dcal_p(U)+\Dcal_h(U))
+C\frac{\e^8}{\l^5} \Gcal_p(U).
\end{align*}
The condition \(\g \le 5/4\) is essential here, as it guarantees that \(v^{4\g-4}\le CQ(v|\vtil)\) for large \(v\).
Combining all the estimates above, we obtain 
\[
\abs{J_{41}} \le
C\frac{\e^3}{\l^2}(\Dcal_p(U)+\Dcal_h(U))
+C\frac{\e^2}{\l}\Gcal_p(U).
\]
For \(J_{42}\), using Young's inequality, the mean value theorem and \eqref{u-aux} yields that
\begin{align*}
\abs{J_{42}}
&\le C\frac{\e^2}{\l} \int_\RR a'(h-\htil)^2 d\x
+C\frac{\e^2}{\l} \int_\RR a'Q(v|\vtil) d\x \\
&\le C\frac{\e^2}{\l}\big(\Gcal_h^-(U)+\Gcal_h^+(U)+\Gcal_p(U)\big) +C\frac{\e^3}{\l^2}\Dcal_p(U).
\end{align*}
For \(J_{43}\), since \(\frac{1}{v^{\a+1}}\) is bounded on the region \(\{p(v)-p(\vtil)<-\d_3\}\), we apply Young's inequality with \eqref{u-aux} and \eqref{Qconst} to find
\begin{align*}
\abs{J_{43}}
&\le C\frac{\e^2}{\l} \int_\RR a'(h-\htil)^2 d\x
+C\frac{\e^2}{\l} \int_\RR a'Q(v|\vtil) d\x \\
&\le C\frac{\e^2}{\l}\big(\Gcal_h^-(U)+\Gcal_h^+(U)+\Gcal_p(U)\big) +C\frac{\e^3}{\l^2}\Dcal_p(U).
\end{align*}
Combining the estimates of $J_{4i}$ for $i=1,2,3$, we derive \eqref{para-B7}.

\vspace{2mm}
\bpf{para-B8}
First of all, we consider the following decomposition:
\begin{align*}
\abs{B_8(U)}
&\le C\int_\RR a \abs{(h-\htil)v^{\b-1} \rd_\x(p(v)-p(\vtil)) \htil'} d\x\\
&\qquad + C\int_\RR a \abs{(h-\htil)(v^{\b-1}-\vtil^{\b-1}) p(\vtil)' \htil'} d\x
\eqqcolon J_{51}+J_{52}.
\end{align*}
For \(J_{51}\), using Young's inequality, we obtain
\[
\abs{J_{51}} \le C\frac{\e}{\l}\Dcal_p(U) + C\frac{\e}{\l} \int_\RR |a'|^2 v^{\b-2}(h-\htil)^2 d\x.
\]
Then, since \(-(\g+\a) \le \b-2 = \g-\a-2 \le 0\), using \eqref{Linfty-G}, \eqref{locE} and \eqref{u-aux} yields that
\begin{align*}
C\frac{\e}{\l} \int_\RR |a'|^2 v^{\b-2}(h-\htil)^2 d\x
&\le C\frac{\e^5}{\l^3} \Dcal_p(U) + C\e^2 \int_\RR a'(h-\htil)^2 d\x\\
&\le C\frac{\e^3}{\l} \Dcal_p(U) + C\e^2 \big(\Gcal_h^-(U)+\Gcal_h^+(U)+\Gcal_p(U)\big).
\end{align*}
Thus, we have 
\[
\abs{J_{51}} \le C\frac{\e}{\l} \Dcal_p(U) + C\e^2 \big(\Gcal_h^-(U)+\Gcal_h^+(U)+\Gcal_p(U)\big).
\]
For \(J_{52}\), we split its integrand into three pieces:
\begin{align*}
\abs{J_{52}}&\le C\frac{\e^2}{\l^2}\int_\RR |a'|^2 \big|(h-\htil)(v^{\b-1}-\vtil^{\b-1})\big|
\one{p(v)-p(\vtil)>\d_3}d\x\\
&\qquad+ C\frac{\e^2}{\l^2}\int_\RR |a'|^2 \big|(h-\htil)(v^{\b-1}-\vtil^{\b-1})\big|
\one{\abs{p(v)-p(\vtil)}\le\d_3}d\x\\
&\qquad+ C\frac{\e^2}{\l^2}\int_\RR |a'|^2 \big|(h-\htil)(v^{\b-1}-\vtil^{\b-1})\big|
\one{p(v)-p(\vtil)<-\d_3}d\x \eqqcolon J_{521}+J_{522}+J_{523}.
\end{align*}
For \(J_{521}\), since \(-(\g+\a) \le \b-1 \le 0\), we apply \eqref{Linfty-G} to find that
\[
|a'(\x) (v^{\b-1}-\vtil^{\b-1})(\x)| \le C\frac{\e^2}{\l}\Dcal_p(U) + C\e\l.
\]
Thus, using \eqref{locE}, \eqref{Qconst} and \eqref{u-aux}, we have
\begin{align*}
\abs{J_{521}}
&\le C\frac{\e^2}{\l^2} \Big(\frac{\e^2}{\l}\Dcal_p(U) + \e\l\Big)\int_\RR a'(h-\htil)\one{p(v)-p(\vtil)>\d_3}d\x \\
&\le C\frac{\e^2}{\l^2} \Big(\frac{\e^2}{\l}\Dcal_p(U) + \e\l\Big) \int_\RR a'(Q(v|\vtil)+|h-\htil|^2) d\x\\
&\le C\frac{\e^3}{\l} \big(\Gcal_h^-(U)+\Gcal_h^+(U)+\Gcal_p(U)\big)
+C\frac{\e^4}{\l^2} \Dcal_p(U).
\end{align*}
For \(J_{522}\), using Young's inequality and the mean value theorem with \eqref{u-aux}, we have
\begin{align*}
\abs{J_{522}}
&\le C\frac{\e^3}{\l} \int_\RR a'(h-\htil)^2 d\x
+C\frac{\e^3}{\l} \int_\RR a'Q(v|\vtil) d\x \\
&\le C\frac{\e^3}{\l}\big(\Gcal_h^-(U)+\Gcal_h^+(U)+\Gcal_p(U)\big) +C\frac{\e^4}{\l^2}\Dcal_p(U).
\end{align*}
For \(J_{523}\), since \(|v^{\b-1}-\vtil^{\b-1}|\le C\) on \(\{p(v)-p(\vtil)<-\d_3\}\), it follows that
\[
\abs{J_{523}}
\le C\frac{\e^3}{\l} \int_\RR a'(h-\htil)^2 d\x
+C\frac{\e^3}{\l} \int_\RR a'\one{p(v)-p(\vtil)<-\d_3}d\x.
\]
Hence, using \eqref{u-aux} and \eqref{Qconst}, we have 
\[
\abs{J_{523}} \le C\frac{\e^3}{\l}\big(\Gcal_h^-(U)+\Gcal_h^+(U)+\Gcal_p(U)\big) +C\frac{\e^4}{\l^2}\Dcal_p(U).
\]
Thus, summing up all, we now have a bound for \(J_{52}\): 
\[
\abs{J_{52}} \le C\frac{\e^3}{\l} \big(\Gcal_h^-(U)+\Gcal_h^+(U)+\Gcal_p(U)\big) +C\frac{\e^4}{\l^2} \Dcal_p(U).
\]
From the estimates of $J_{51}$ and $J_{52}$, we get the desired result \eqref{para-B8}.

\vspace{2mm}
\bpf{para-2-total}
\eqref{para-2-total} is a direct consequence of \eqref{para-B6}, \eqref{para-B7} and \eqref{para-B8}.

\bpf{para-2-total-0}
\eqref{para-2-total} is a direct consequence of \eqref{para-2-total} and \eqref{locE}.
Although \(\Gcal_h^+\) cannot be bounded directly by \eqref{locE}, the desired bound can be obtained by applying \eqref{locE} in the course of the proof of \eqref{para-B6}--\eqref{para-B8}, prior to the use of \eqref{u-aux}.
\end{proof}

\subsection{Estimates on the shift part} \label{sec:shift}
To use Proposition \ref{prop:main3} in the proof of Proposition \ref{prop:main}, we need to extract \(-\frac{1}{\e\d_3}|\Ical_g^Y(\vbar)|^2\) from \(-\frac{1}{\e^4}|Y(U)|^2\).
For this purpose, we aim to demonstrate that \(|Y(U)-\Ical_g^Y(\vbar)|\) is negligible.
Thus, we recall the definition of \(Y\) in \eqref{ybg-first} and consider the following decomposition:
\[
Y=Y_g+Y_b+Y_l+Y_s,
\]
where
\begin{align*}
Y_g &\coloneqq
-\frac{1}{2\s_\e^2} \int_\O a' (p(v)-p(\vtil))^2 d\x
-\int_\O a' Q(v|\vtil) d\x \\
&\qquad\qquad
-\int_\O a p(\vtil)' (v-\vtil) d\x 
+\frac{1}{\s_\e} \int_\O a \htil' (p(v)-p(\vtil)) d\x, \\
Y_b &\coloneqq
-\frac{1}{2}\int_\O a'\Big(h-\htil-\frac{p(v)-p(\vtil)}{\s_\e}\Big)^2 d\x 
-\frac{1}{\s_\e}\int_\O a'(p(v)-p(\vtil))\Big(h-\htil-\frac{p(v)-p(\vtil)}{\s_\e}\Big) d\x,\\
Y_l &\coloneqq
\int_\O a \htil' \Big(h-\htil-\frac{p(v)-p(\vtil)}{\s_\e}\Big) d\x, \\
Y_s &\coloneqq
-\int_{\O^c} a' Q(v|\vtil) d\x
-\int_{\O^c} a p(\vtil)' (v-\vtil) d\x
-\int_{\O^c} a'\frac{(h-\htil)^2}{2} d\x
+\int_{\O^c} a \htil' (h-\htil) d\x.
\end{align*}
Note that \(Y_g\) consists solely of terms involving \(p(v)-p(\vtil)\) (or \(v-\vtil\)).
In addition, \(Y_b\) is composed of remaining quadratic terms and \(Y_l\) of linear terms, whereas \(Y_s\) comprises the terms corresponding to the region where \(p(v)\) is large (equivalently, where \(v\) is small).
Given this decomposition, we present the following proposition, which establishes that \(Y_g(U)-\Ical_g^Y(\vbar),Y_b(U),Y_l(U)\), and \(Y_s(U)\) are negligible compared to the good terms.

\begin{prop} \label{prop:shift}
For any fixed constant state \(U_-\coloneqq(v_-,u_-)\in\RR^+\times\RR\) and any constant \(C^*>0\), there exist constants \(\e_0,\d_0,C>0\) (in particular, \(C\) depends on \(C^*\)) such that for any \(\e<\e_0\) and \(\d_0^{-1}\e <\l<\d_0<1/2\) and any \(U\in\Hcal_T\) with \(\abs{Y(U)}\le\e^2\) and \(\Dcal_p(U)\le C^*\frac{\e^2}{\l}\), the following holds.
\begin{equation}\label{Y-conc}
\begin{aligned}
&\abs{Y_g(U)-\Ical_g^Y(\vbar)}^2+\abs{Y_b(U)}^2+\abs{Y_l(U)}^2+\abs{Y_s(U)}^2 \\
&\le C\frac{\e^2}{\l}\Big(\frac{\e}{\l}\Dcal_p(U)+\sqrt{\frac{\e}{\l}}\Gcal_p(U)+(\Gcal_p(U)-\Gcal_p(\Ubar))
+\Gcal_h^-(U) +\sqrt{\frac{\l}{\e}}\Gcal_h^+(U)\Big)
\end{aligned}
\end{equation}
\end{prop}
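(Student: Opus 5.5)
We estimate the four pieces separately, following \cite[Proposition 4.5]{EEKO-ISO} and \cite[Lemma 4.7]{KV-Inven21}. The tools are the same as for Proposition \ref{prop:hyp}: the localized smallness \eqref{locE}, the pointwise bounds \eqref{pwp1}--\eqref{pwp3} with Lemma \ref{lemma_pushing}, and the nonlinearization \eqref{NL}--\eqref{NL-J3}. We also use the scalings $|a'|\sim\frac{\l}{\e}\vtil'\sim\frac{\l}{\e}|\htil'|$ from \eqref{der-scale}, so that every weight $a\,p(\vtil)'$ or $a\htil'$ equals $\frac{\e}{\l}a'$ up to constants. The structure of each piece is the same: the left side is squared, and one factor is bounded by \eqref{locE} (giving $\frac{\e^2}{\l}$) while the other gives a good term.

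\textbf{Step 1: $Y_g-\Ical_g^Y(\vbar)$.} Since $\vbar=v$ on $\{|p(v)-p(\vtil)|\le\d_3\}$, the difference is supported on $\O\cap\{p(v)-p(\vtil)<-\d_3\}$ (where $\vbar=\vbar_s$) together with the $\O^c$ part of $\Ical_g^Y(\vbar)$. On $\O$, the quadratic and $Q$ terms are handled exactly like $J_{11},J_{12}$ in \eqref{bo1pJ1}--\eqref{bo1pJ2}, together with the difference $Q(v|\vtil)-Q(\vbar|\vtil)$ from \eqref{l2-p}. This bounds them by $\frac{\e}{\l}\Dcal_p+(\Gcal_p-\Gcal_p(\Ubar))$. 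The linear term $\frac{\e}{\l}\int a'|v-\vbar|$ needs \eqref{Q-lin}, since $v$ may be large. We write $\int a'|v-\vbar|\le C\int a'(Q(v|\vtil)-Q(\vbar|\vtil))$, which follows from Lemma \ref{lem-pro} \eqref{rel_Phi-L}. The $\O^c$ part of $\Ical_g^Y(\vbar)$ is bounded by $C\int_{\O^c}a'$, which is controlled by \eqref{bo1pJ3}. We then square each bound. For the $\Gcal_p$ pieces we use $\Gcal_p\le C\frac{\e^2}{\l}$, so for instance $(\frac{\e}{\l}\Gcal_p)^2\le \frac{\e^2}{\l}\cdot\frac{\e^2}{\l^2}\Gcal_p\le\frac{\e^2}{\l}\sqrt{\e/\l}\,\Gcal_p$. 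For the $\Dcal_p$ pieces we use the hypothesis $\Dcal_p\le C^*\frac{\e^2}{\l}$, so that $(\frac{\e}{\l}\Dcal_p)^2\le C\frac{\e^2}{\l}\frac{\e}{\l}\Dcal_p$.

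\textbf{Step 2: $Y_b$ and $Y_l$.} For $Y_b$, the first term is bounded by $\Gcal_h^+$. We square it and use $\Gcal_h^+\le C\frac{\e^2}{\l}$, which follows from \eqref{locE} since $(p(v)-p(\vtil))^2\le CQ(v|\vtil)$ on $\O$ up to the $\vbar_s$ correction. For the cross term we apply Cauchy--Schwarz: its square is at most $\Gcal_h^+\int_\O a'(p(v)-p(\vtil))^2$. The second factor is at most $C\frac{\e^2}{\l}$ by \eqref{locE} together with the $J_{11}$-type bound. For $Y_l$ we again apply Cauchy--Schwarz with $|\htil'|\le C\frac{\e}{\l}a'$. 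This gives $|Y_l|^2\le C\frac{\e^2}{\l^2}\Gcal_h^+\int a'\le C\frac{\e^2}{\l}\cdot\frac{\e}{\l}\Gcal_h^+$, which is well within the allowed factor $\sqrt{\l/\e}\,\Gcal_h^+$. The factor $\sqrt{\l/\e}$ in \eqref{Y-conc} is what absorbs the weaker bound on the $Y_b$ cross term.

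\textbf{Step 3: $Y_s$, the main obstacle.} On $\O^c$ the volume $v$ is small, so $|v-\vtil|\le C$ and $Q(v|\vtil)$ can be large. First, $\int_{\O^c}a'Q$ has square at most $C\frac{\e^2}{\l}\Gcal_p$ by \eqref{locE}. Next, $\int_{\O^c}a'(h-\htil)^2$ squared is at most $C\frac{\e^2}{\l}\Gcal_h^-$. Then $\frac{\e}{\l}\int_{\O^c}a'|h-\htil|$ is at most $\frac{\e}{\l}\sqrt{\Gcal_h^-}\sqrt{\int_{\O^c}a'}$, and its square is at most $\frac{\e^2}{\l}\Gcal_h^-$ once we note $\int_{\O^c}a'\le C\frac{\e^2}{\l}$. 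Finally, $\frac{\e}{\l}\int_{\O^c}a'|v-\vtil|\le C\frac{\e}{\l}\int_{\O^c}a'$, and we bound $\int_{\O^c}a'$ by \eqref{bo1pJ3}, i.e. by $\frac{\e}{\l}\Dcal_p+\frac{\e^4}{\l^2}\Gcal_p$, before squaring. The delicate point is to avoid using the crude $L^1$ bound $\int_{\O^c}a'\lesssim\Gcal_p$ inside a square without picking up the factor $\frac{\e^2}{\l}$. If a term does not close, we first try interpolating between \eqref{Qconst} and \eqref{NL-J3} and balancing with $\Dcal_p\le C^*\frac{\e^2}{\l}$.
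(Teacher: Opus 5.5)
\textbf{There is a genuine gap in your Step 3, in the first term of $Y_s$.} Your overall route (term-by-term bounds, squaring, one factor controlled by \eqref{locE}) is the reference argument, and Steps 1 and 2 essentially go through.

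You bound $\bigl(\int_{\O^c}a'Q(v|\vtil)\,d\x\bigr)^2\le C\frac{\e^2}{\l}\Gcal_p(U)$ using \eqref{locE}. This is not admissible in \eqref{Y-conc}. There $\Gcal_p$ appears only with the small factor $\sqrt{\e/\l}$, and otherwise only through the difference $\Gcal_p(U)-\Gcal_p(\Ubar)$. So a term $\frac{\e^2}{\l}\Gcal_p(\Ubar)$ is too large by a factor $\sqrt{\l/\e}\gg1$.

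The loss is fatal downstream. In Step 2 of the proof of Proposition \ref{prop:main}, it would put $\frac{C}{\d_3}\frac{\e}{\l}\Gcal_p(\Ubar)$ into $J_{63}$. After Proposition \ref{prop:main3} has been applied, only $(\d_3-\d_0)\frac{\e}{\l}\Gcal_p(\Ubar)$ is left to absorb it.

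The fix is the splitting you already use in Step 1. On $\O^c$ write $Q(v|\vtil)=\bigl(Q(v|\vtil)-Q(\vbar|\vtil)\bigr)+Q(\vbar|\vtil)$. Since $p(\vbar)-p(\vtil)=\d_3$ there, $Q(\vbar|\vtil)\le C$, so
\[
\int_{\O^c}a'Q(v|\vtil)\,d\x\le \frac{1}{\s_\e}\bigl(\Gcal_p(U)-\Gcal_p(\Ubar)\bigr)+C\int_{\O^c}a'\,d\x .
\]
Square the first piece using \eqref{l2-p}. Square the second using \eqref{bo1pJ3} together with $\Dcal_p(U)\le C^*\frac{\e^2}{\l}$ and $\Gcal_p(U)\le C\frac{\e^2}{\l}$. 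Both pieces then land inside \eqref{Y-conc}.

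Your closing suggestion to interpolate between \eqref{Qconst} and \eqref{NL-J3} misidentifies the issue. The term that fails is this $Q$-term, and only the decomposition above is needed.

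Two minor slips, both harmless:
\begin{itemize}
\item In the $Y_l$ estimate, $\frac{\e^2}{\l^2}\int_\RR a'\,d\x=\frac{\e^2}{\l}$, not $\frac{\e^2}{\l}\cdot\frac{\e}{\l}$. You therefore get $|Y_l|^2\le C\frac{\e^2}{\l}\Gcal_h^+(U)$, which is still admissible.
\item Your own cross-term bound for $Y_b$ is already $C\frac{\e^2}{\l}\Gcal_h^+(U)$. So the factor $\sqrt{\l/\e}$ in \eqref{Y-conc} is not what absorbs it; it is simply slack.
\end{itemize}
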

\begin{proof}
Owing to its structural similarity with the barotropic NS system, the shift part \(Y\) coincide with that appearing in the NS case.
The proof therefore proceeds along the same lines as in \cite[Proposition 4.5]{EEKO-ISO}, and the details are omitted.
\end{proof}

\subsection{Proof of Proposition \ref{prop:main}} \label{sec:proof}
In this subsection, we combine all the estimates obtained in the previous subsections to complete the proof of Proposition \ref{prop:main}. The proof is divided into two cases according to the size of \(\Dcal_p(U)\).

\step{1}
We begin by considering the case \(\Dcal_p(U)\ge 6C^*\frac{\e^2}{\l}\), where the constant \(C^*\) is chosen to be the greater one among constants \(C^*\) given in Proposition \ref{prop:hyp}, \ref{prop:para-1} and \ref{prop:para-2}.
Then, applying \eqref{hyp-total}, \eqref{para-1-total}, and \eqref{para-2-total-0}, we take \(\d_0\) sufficiently small so that we obtain
\begin{align*}
\Rcal(U)
&\le
\abs{\Bcal_{\d_3}(U)} + \abs{\Pcal_1(U)} + \abs{\Pcal_2(U)} - (1-\d_0)\Dcal_p(U) - \frac{1}{2}\Dcal_h(U) \\
&\le 3C^*\frac{\e^2}{\l} - (1-C\d_0)\Dcal_p(U) - \big(\frac{1}{2}-C\d_0\big)\Dcal_h(U)
\le 3C^*\frac{\e^2}{\l} - \frac{1}{2}\Dcal_p(U) \le 0.
\end{align*}

\step{2}
We now consider the other alternative, i.e., \(\Dcal_p(U)\le 6C^* \frac{\e^2}{\l}\).
To proceed further, it is necessary to first determine the truncation size \(\d_3>0\).
Thanks to \eqref{lbisC2}, we choose \(\d_3\) of Proposition \ref{prop:main3} corresponding to the constant \(C_2\) in \eqref{lbisC2}.
This, in turn, naturally determines \(\vbar\).
Then, we extract \(-|\Ical_g^Y(\vbar)|^2\) from \(-|Y(U)|^2\) as follows:
\[
-5\abs{Y(U)}^2 \le -\abs{\Ical_g^Y(\vbar)}^2 + 5\abs{Y_g(U)-\Ical_g^Y(\vbar)}^2 + 5\abs{Y_b(U)}^2 + 5\abs{Y_l(U)}^2 + 5\abs{Y_s(U)}^2.
\]
Now, taking \(\d_0>0\) sufficiently small such that \(\d_0 \le \d_3^5\), we then use \eqref{keyD} to obtain that for \(\e<\e_0(\le \d_3)\) and \(\e/\l<\d_0\), 
\begin{align*}
\Rcal(U) &\le -\frac{5}{\e\d_3}Y(U)^2 + \Bcal_{\d_3}(U) + \Pcal_1(U) + \Pcal_2(U) \\
&\qquad -\Gcal_h^-(U)-\Gcal_h^+(U) -\left(1-\d_0\frac{\e}{\l}\right)\Gcal_p(U) - \frac{1}{2}\Dcal_h(U) - (1-\d_0)\Dcal_p(U) \\
&\le -\frac{1}{\e\d_3}\abs{\Ical_g^Y(\vbar)}^2 + \Ical_1(\vbar) + \Ical_2(\vbar) 
-\left(1-\d_3\frac{\e}{\l}\right)\Gcal_p(\Ubar) - (1-\d_3)\Dcal_p(\Ubar) \\
&\qquad
+\underbrace{
\big(|B_1^+(U)-\Ical_1(\vbar)| + |B_1^-(U)| + |B_2(U)-\Ical_2(\vbar)|\big)}_{\eqqcolon J_{61}}
+\underbrace{|\Pcal_1(U)|+|\Pcal_2(U)|}_{\eqqcolon J_{62}}\\
&\qquad
+\underbrace{\frac{5}{\e\d_3}\big(|Y_g(U)-\Ical_g^Y(\vbar)|^2 + |Y_b(U)|^2 + |Y_l(U)|^2 + |Y_s(U)|^2\big)}_{\eqqcolon J_{63}}
-\Gcal_h^-(U)-\Gcal_h^+(U)\\
&\qquad
-\frac{1}{2}(\Gcal_p(U)-\Gcal_p(\Ubar))
- (\d_3-\d_0)\frac{\e}{\l}\Gcal_p(\Ubar) 
- (\d_3-\d_0)\Dcal_p(U) - \frac{1}{2}\Dcal_h(U).
\end{align*}
Thanks to Proposition \ref{prop:main3}, the first line of the right-hand side, which is $\Rcal_{\e,\d_3}$, is non-positive.
Accordingly, it remains to show that the remaining good terms are large enough to control \(J_{61},J_{62}\) and \(J_{63}\).\\
For \(J_{61}\), using \eqref{bo1p}--\eqref{bo2} and taking \(\d_0(\le \d_3^3)\) small enough, it holds that for any \(\e,\l\) with \(\l<\d_0\) and \(\e/\l < \d_0\),
\begin{align*}
J_{61} &\le \d_3 \Gcal_h^-(U) + C\frac{\e}{\l}\Dcal_p(U)+ C\frac{\e}{\l}(\Gcal_p(U)-\Gcal_p(\Ubar))+ C\frac{\e^4}{\l^2} \Gcal_p(U)  \\
&\le \d_3\Gcal_h^-(U) + \frac{\d_3}{4}\Dcal_p(U)
+ \frac{\d_3}{4}(\Gcal_p(U)-\Gcal_p(\Ubar))
+ \frac{\d_3}{4}\frac{\e}{\l}\Gcal_p(\Ubar).
\end{align*}
For \(J_{62}\), using \eqref{para-1-total} and \eqref{para-2-total}, it also holds that
\begin{align*}
J_{62} &\le C \Big(\l+\frac{\e}{\l}\Big)\Dcal_p(U) + \l\Dcal_h(U)
+ C\e\Big(\Gcal_h^-(U)+\Gcal_h^+(U)+\Gcal_p(U)\Big)\\
&\le \frac{\d_3}{4} \Dcal_p(U) + \frac{1}{2} \Dcal_h(U)
+ \frac{\d_3}{4}(\Gcal_p(U)-\Gcal_p(\Ubar))
+ \frac{\d_3}{4}\frac{\e}{\l}\Gcal_p(\Ubar).
\end{align*}
For \(J_{63}\), using \eqref{Y-conc}, we obtain
\begin{align*}
J_{63} 
&\le \frac{C}{\d_3}\sqrt{\frac{\e}{\l}}
\Big(\Dcal_p(U)
+ \Gcal_h^-(U) + \Gcal_h^+(U)
+(\Gcal_p(U)-\Gcal_p(\Ubar))
+\frac{\e}{\l}\Gcal_p(\Ubar)\Big) \\
&\le \frac{\d_3}{4}\Big(\Dcal_p(U) + \Gcal_h^-(U)+\Gcal_h^+(U)
+(\Gcal_p(U)-\Gcal_p(\Ubar)) 
+\frac{\e}{\l}\Gcal_p(\Ubar)\Big).
\end{align*}
Combining the estimates for $J_{6i}$, we get \eqref{contraction}.

It remains to prove \eqref{bad-bound2}, which can be established without the need for the case division.
Gathering \eqref{hyp-total}, \eqref{para-1-total} and \eqref{para-2-total-0}, we find that
\[
|\Bcal_{\d_3}(U)| + |\Pcal_1(U)| + |\Pcal_2(U)|
\le C \Big(\l+\frac{\e}{\l}\Big)\Dcal_p(U) + \l\Dcal_h(U) + C\frac{\e^2}{\l}.
\]
Then, we recall the definition of \(\Bcal_{\d_3}\) and \(\Jcal^{bad}\) and thus we obtain 
\[
|\Jcal^{bad}(U)| \le |\Bcal_{\d_3}(U)|
+ C\abs{\int_\O a'(h-\htil)^2 d\x}
+ C\abs{\int_\O a'(p(v)-p(\vtil))^2d\x}
\le |\Bcal_{\d_3}(U)| + C\l.
\]
Here, we used \eqref{locE} and \eqref{totvar-ai}.
This completes the proof of \eqref{bad-bound2} and Proposition \ref{prop:main}. \qed

\section{Proof of Theorem \ref{thm:existence}}\label{sec:5}
\setcounter{equation}{0}

In this section, we provide the proof of Theorem \ref{thm:existence}.
The central idea is to exploit the contraction property established in Theorem \ref{thm_main} (or Theorem \ref{thm_main3}) as an a priori estimate.
More precisely, due to the structural features of the NSK system, a direct application of the conventional \(L^2\)-energy estimate is not feasible.
Instead, we employ the contraction property as a substitute for such an \(L^2\)-energy estimate.
It should be noted that, once uniform lower and upper bounds on \(v\) are established, the contraction property effectively reduces to an \(L^2\)-energy estimate by the (locally) quadratic nature of the relative entropy as in Remark \ref{rmk:rel}.
To establish global-in-time solutions, we make use of the standard continuation argument---namely, combining local existence with an a priori estimate to demonstrate that the maximal lifetime of the solutions is infinite.
Since the contraction property is valid only for solutions belonging to the functional class \(\Xcal_T\), appropriate regularity must be taken into account both in the local existence result and in the a priori estimates.

On the other hand, due to a technical limitation, our existence result only covers more restrictive setting than the one required for establishing the contraction property in condition \eqref{g-a-condition}.
In particular, the additional assumption \(\a<1/2\) is necessary.
Moreover, since the constants \(\e_0\) and \(\d_0\) required for the contraction property have already been determined in Theorem \ref{thm_main}, we may fix the two end states accordingly.
As a result, we shall write \(\s\) in place of \(\s_\e\).

\subsection{Main propositions}
In this subsection, we present two main propositions and the proof of Theorem \ref{thm:existence}.
We begin by establishing the local-in-time existence result.

\begin{prop} \label{prop:lwp}
Let \((v_\pm,u_\pm)\in\RR^+\times\RR\) be two given constant states which satisfy \eqref{end-con}.
Let \(\Util\coloneqq(\vtil,\util)\) be the traveling wave solution of \eqref{shock_0}, which connects \((v_-,u_-)\) and \((v_+,u_+)\), with the traveling speed \(\s\).
Then, for any \(M_0\) and \(r_0\), there exists \(\That>0\) such that the following holds.
For any initial datum \(U_0=(v_0,u_0)\) which satisfies 
\[
\|v_0-\vtil\|_{H^2(\RR)}+\|u_0-\util\|_{H^1(\RR)} \le M_0, \quad \text{and} \quad
\inf_{x\in\RR} v_0 \ge r_0,
\]
there exists a unique solution \(U=(v,u)\) to \eqref{main0} on \([0,\That]\) with the initial datum \((v_0,u_0)\) such that
\begin{align*}
&(v-\vtil,u-\util) \in C([0,T];H^2(\RR))\cap L^2(0,T;H^3(\RR)) \times C([0,T];H^1(\RR))\cap L^2(0,T;H^2(\RR)),\\
&\qquad\sup_{t\in[0,\That]}\|v-\vtil\|_{H^2(\RR)}+\sup_{t\in[0,\That]}\|u-\util\|_{H^1(\RR)} \le 2M_0, \quad \text{and} \quad
\inf_{t\in[0,\That]}\inf_{x\in\RR} v \ge \frac{r_0}{2}.
\end{align*}
\end{prop}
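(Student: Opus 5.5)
We plan to prove Proposition \ref{prop:lwp} by a standard contraction-mapping (Picard) argument applied to the transformed system \eqref{main00} for the perturbation, which is semilinear parabolic in $(v,h)$ instead of third order in $v$. Write $\phi=v-\vtil$ and $\psi=h-\htil$, with $h=u-\t_1\g v_x/v^{\a+1}$. Then the hypotheses give $\phi_0\in H^2$, $\psi_0\in H^1$ and $v_0\ge r_0$, because $h_0-\htil$ involves only $\phi_0$, $\phi_{0,x}$ and $u_0-\util$. Conversely, a solution $\phi\in C([0,\That];H^2)\cap L^2(0,\That;H^3)$, $\psi\in C([0,\That];H^1)\cap L^2(0,\That;H^2)$ yields $u-\util=\psi+\t_1\g(v_x/v^{\a+1}-\vtil'/\vtil^{\a+1})$ in the stated class. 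Uniqueness for \eqref{main0} then follows from uniqueness for \eqref{main00}, since the change of variables is invertible in this class.

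In the iteration scheme, given $(\phi^n,\psi^n)$ in the ball
\[
B=\Big\{\sup_t(\|\phi\|_{H^2}+\|\psi\|_{H^1})\le 2M_0',\ \inf v\ge r_0/2\Big\},
\]
we solve the linear system
\[
\phi^{n+1}_t-\t_1\g(a_n\phi^{n+1}_x)_x=F_n,\qquad \psi^{n+1}_t-\t_2\g(b_n\psi^{n+1}_x)_x=G_n.
\]
Here $a_n=(v^n)^{-\a-1}$ and $b_n=(v^n)^{-\a-1}$ are bounded above and below on $B$, because $H^1\subset L^\infty$ and $v^n\ge r_0/2$. We use the form $-\t_1(v^\b p(v)_x)_x=\t_1\g(v^{-\a-1}v_x)_x$. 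The terms $F_n$ and $G_n$ collect $\psi^n_x$, $p(v^n)_x$, the $\vtil,\htil$ forcing and the lower-order commutators. Standard parabolic energy estimates then give $\phi^{n+1}$ in $C(H^2)\cap L^2(H^3)$ and $\psi^{n+1}$ in $C(H^1)\cap L^2(H^2)$. To get these we differentiate twice (resp. once) and test. Top-order terms of the form $(a_n)_{xx}\phi_x$ are controlled using $\phi^n\in L^2(H^3)$ and Gagliardo--Nirenberg. The estimates carry a factor $e^{C\That}$ and bounds by $C\That^{1/2}$ times norms of the previous iterate.

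The lower bound on $v$ follows from $\|\phi^{n+1}(t)-\phi_0\|_{L^\infty}\le C\|\phi^{n+1}(t)-\phi_0\|_{L^2}^{1/2}\|\cdot\|_{H^1}^{1/2}$ together with $\|\phi(t)-\phi_0\|_{L^2}\le\int_0^t\|\phi_t\|_{L^2}\le C\That^{1/2}$. Hence $\inf v\ge r_0/2$ once $\That$ is small in terms of $M_0$ and $r_0$. Here we use $\vtil\ge v_->0$. We take $M_0'\sim CM_0$ to absorb the equivalence between $u$ and $h$ norms; translating back costs one derivative of $\phi$, and that is why $H^2$ is needed for $\phi$.

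The contraction of the map is proved in the lower norm $C(L^2)\cap L^2(H^1)$ for $(\phi,\psi)$, which is enough because $B$ is closed under that convergence. The main obstacle is the top-order $H^2$ estimate for $\phi$, where the quasilinear coefficient $v^{-\a-1}$ is differentiated twice and $\psi_x$ enters as a source. We will handle this by treating $\psi_{xx}\in L^2(L^2)$ as the source in the $\phi_{xx}$ estimate and absorbing via Young's inequality with small $\That$. The bound of $2M_0$ on the original $(v-\vtil,u-\util)$ norms is obtained by choosing the ball radius in $(\phi,\psi)$ appropriately and shrinking $\That$.
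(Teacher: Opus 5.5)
Your route (pass to the parabolic form \eqref{main00}, bound the iterates in the high norm on a short interval, and contract in $C(L^2)\cap L^2(H^1)$) is the standard approximate-solution/Cauchy-estimate argument that the paper invokes without details. In substance it gives existence, uniqueness, the stated regularity and $\inf v\ge r_0/2$ with $\That=\That(M_0,r_0)$. The closure, however, is misdescribed.

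\textbf{The closure of the scheme.} The source $\psi_x$ in the $\phi$-equation is critical at the $H^2$ level. After Young's inequality you are left with $C\int_0^t\|\psi^n_{xx}\|_{L^2}^2\,ds$. This is bounded but \emph{not} small in $\That$ uniformly over the data, since a high-frequency $\psi_0$ dissipates its $H^1$ energy almost at once. So it cannot be ``absorbed with small $\That$''.

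What closes the scheme is the one-way coupling. The source $-(p(v)-p(\vtil))_x$ of the $\psi$-equation is of lower order, so the $\psi$-bound is controlled by $\|\psi_0\|_{H^1}^2$ plus terms carrying a factor $\That$. Accordingly:
\begin{itemize}
\item your ball must also contain the dissipation norms $L^2(H^3)\times L^2(H^2)$, since you need $\psi^n\in L^2(H^2)$ merely to have $F_n\in L^2(H^1)$;
\item it must contain an upper bound on $v$, propagated exactly like your lower bound; otherwise the constant in front of $\int\psi_{xx}^2$ grows with the radius and the argument becomes circular;
\item the radii must be fixed hierarchically: first for $\psi$, then for $\phi$ with radius at least $\|\phi_0\|_{H^2}^2+C\|\psi_0\|_{H^1}^2$, and only then $\That$.
\end{itemize}
A ball of radius twice the initial $(\phi,\psi)$-norm is in general not invariant.

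\textbf{The final $2M_0$ bound.} This step cannot be repaired. The constants converting $(\phi,\psi)$-norms into $\|v-\vtil\|_{H^2}+\|u-\util\|_{H^1}$ do not depend on $\That$, so no choice of radius and $\That$ produces the exact bound $2M_0$.

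In fact that bound fails in general, because the principal part is non-normal. Linearize \eqref{main0} at a constant $\bar v$, with $\mu_0=\mu(\bar v)/\bar v$ and hence $\kappa(\bar v)/\bar v^5=c\mu_0^2$. A Fourier mode with $\hat u_0=0$ then evolves as $\hat u(t)=\frac{c}{\t_1-\t_2}\big(e^{-\t_2 s}-e^{-\t_1 s}\big)\mu_0\, ik\hat v_0$, where $s=\mu_0k^2t$. Take small high-frequency data with $u_0=\util$ and $\|v_0-\vtil\|_{H^2}=M_0$. At times $t\sim(\mu_0k^2)^{-1}\to0$ they reach $\|u(t)-\util\|_{H^1}\approx\frac{c\mu_0}{\t_1-\t_2}\max_{s>0}\big(e^{-\t_2 s}-e^{-\t_1 s}\big)M_0$. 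This exceeds $2M_0$ when $c>0$ and $\mu_0$ is large (e.g.\ small end states).

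What your scheme delivers is a bound $C(M_0,r_0)$ on $[0,\That]$. That is all the blow-up alternative and the continuation argument of Section~\ref{sec:5} use, so that is the statement you should aim for.
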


\begin{proof}
The proof of local well-posedness follows from the standard approach of constructing a sequence of approximate solutions combined with a Cauchy-type estimate (see for instance, \cite{Solonnikov}).
For the sake of brevity, we do not provide the detailed argument here.
\end{proof}

To extend the solution globally, we then formulate the main proposition concerning an a priori uniform estimates.

\begin{prop} \label{prop:apriori}
Under the same assumptions in Theorem \ref{thm:existence}, if \(U\) is a solution of \eqref{main0} on \([0,T_0)\) for some \(T_0>0\) such that
\begin{align*}
&(v-\vtil,u-\util) \in C([0,T];H^2(\RR))\cap L^2(0,T;H^3(\RR))
\times C([0,T];H^1(\RR))\cap L^2(0,T;H^2(\RR)),\\
&\hspace{39mm} 0 < v^{-1} \in L^\infty(0,T;L^\infty(\RR)), \,\,\,\,\, \forall\, T\in(0,T_0),
\end{align*}
then there exists a constant \(C(T_0)\) such that 
\begin{align*}
&\sup_{t\in[0,T_0)}\|v-\vtil\|_{H^2(\RR)}
+\sup_{t\in[0,T_0)}\|u-\util\|_{H^1(\RR)}
+\sup_{t\in[0,T_0)} \|1/v\|_{L^\infty(\RR)} \le C(T_0).
\end{align*}
\end{prop}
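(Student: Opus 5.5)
We take the contraction estimate of Theorem \ref{thm_main3} as the basic a priori bound and bootstrap it to higher norms, with $T_0$ fixed throughout. Since $U\in\Xcal_T$ for every $T<T_0$, Theorem \ref{thm_main3} applies on $[0,T]$. Using $a\in[1,1+\l]$, it gives uniformly in $T<T_0$:
\[
\sup_t\int_\RR \et(U^X|\Util)\,d\x+\int_0^{T}\!\!\int_\RR v^{\g-\a}|\rd_\x(p(v)-p(\vtil))|^2+v^{-\a-1}|\rd_\x(h-\htil)|^2\,d\x\,ds\le C\Ecal_0 .
\]
By \eqref{shift-control}, $|X(t)|\le C(T_0)$. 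The decay \eqref{tail} then lets us replace $\Util(\x)$ by $\Util(\x+X(t))$, i.e. return to the unshifted frame, at the cost of a constant depending on $T_0$.

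\textbf{Step 1 (uniform bounds on $v$).} This is the step I expect to be the main obstacle.

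For the upper bound on $v$: the $L^\infty_t L^1$-type control of $Q(v|\vtil)$ from \eqref{rel_Phi} gives $|v-\vtil|$ in $L^\infty_tL^1_x$ on the large-$v$ region. I would combine this with a Bresch--Desjardins-type bound in $(v,h)$ form. Indeed, $\tau_1\mu(v)v_x/v=u-h$, so the $L^2$ bounds on $u-\util$ (from $h$ and the energy) and on $h-\htil$ give $\rd_x\phi(v)\in L^\infty_tL^2_x$ with $\phi'(v)=v^{-\a-1}$. To get this I also need the standard energy for $u$ itself, i.e. the physical entropy $\frac12 u^2+Q(v)+\kappa$-term. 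Then $\phi(v)-\phi(\vtil)$ is controlled in $L^\infty_x$ by a Sobolev/FTC argument on sets where $Q$ is small.

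For the lower bound on $v$: when $\a<1/2$ the primitive is $\phi(v)=-v^{-\a}/\a$, which blows up as $v\to0$, so the $L^\infty$ bound on $\phi(v)$ yields $\inf v>0$. This also covers $\a=0$ with the logarithm. The same bound gives an upper bound on $v$ as well. If instead the bound has to come from the pointwise interpolation estimates (as in Lemma \ref{lem-pw}), with FTC from the point $\x_0$ where $v=\vtil$, I would use \eqref{pwp2}--\eqref{pwp3}, now with time integrability. The role of $\a<1/2$ is to make the exponents match, in the spirit of the condition $2\g+\b-2\le1$.

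\textbf{Step 2 (first-order bounds).} With $0<C^{-1}\le v\le C$, Remark \ref{rmk:rel} turns the contraction into an $L^\infty_tL^2$ bound on $(v-\vtil,h-\htil)$, together with $L^2_tH^1$ bounds on them. Next, I multiply \eqref{main00} (in the $\x$ frame) by $-\rd_\x^2(v-\vtil)$ and $-\rd_\x^2(h-\htil)$. The parabolic terms $\t_1(v^{\g-\a}p(v)_x)_x$ and $\t_2\g(v^{-\a-1}h_x)_x$ are uniformly parabolic once $v$ is bounded. The cross terms $h_x$, $p(v)_x$ are absorbed with Young's inequality, and Gronwall on $[0,T_0)$ gives $H^1$ bounds on $v-\vtil$ and $h-\htil$ and $L^2_tH^2$ bounds.

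\textbf{Step 3 (recovering the original variables).} Apply $\rd_x$ once more, i.e. an $H^2$ estimate for $v$ and an $H^1$ estimate for $h$, obtained the same way using Gagliardo--Nirenberg and the bounds on $v$. Since $u-\util=h-\htil+\t_1(\mu(v)v_x/v-\mu(\vtil)\vtil_x/\vtil)$, the $H^1$ bound on $u-\util$ needs $v-\vtil\in H^2$, and $v-\vtil\in H^2$ comes from the $v$ equation. Collecting Steps 1--3 gives the claimed bound with constant $C(T_0)$.
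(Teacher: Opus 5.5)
\paragraph{The gap is in Step 1, where the whole difficulty lies.} Your $L^\infty_tL^2$ bound on $w:=\mu(v)v_x/v=(u-h)/\tau_1$ depends on an $L^\infty_tL^2$ bound for $u-\util$. You take this from a relative physical-energy estimate that you neither state nor prove, and for large perturbations of the shock it is not free. The relative energy produces profile-interaction terms such as $\sigma\int\vtil'\,p(v|\vtil)\,dx$ and $\int(v^{-\alpha-1}-\vtil^{-\alpha-1})\util'(u-\util)_x\,dx$, plus Korteweg cross terms. Near vacuum, $p(v|\vtil)\sim v^{-\gamma}$ is not controlled by $Q(v|\vtil)\sim v^{1-\gamma}$ (or $-\log v$). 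So closing that estimate already requires vacuum control, e.g.\ $v^{-(\alpha+1)}\in L^1_tL^\infty_x$, which is part of what you are trying to prove.

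The paper never uses the physical energy. It first removes the profile from the dissipation terms in the contraction estimate, obtaining \eqref{apriori-diff}. The only delicate term is $\int_0^T\!\int v^{-\alpha-1}|\partial_x\htil|^2$. It is handled by proving $\|v^{-(\alpha+1)/2}\|_{L^2_tL^\infty_x}\le C$ via the fundamental theorem of calculus from a reference point (Lemma \ref{lem:refpt}). This uses $\int_0^T\!\int v^{-\alpha-\gamma-2}v_x^2\le C$ together with $|\{v^{-1}\ge N\}|\lesssim\int Q(v|\vtil^{-\sigma t})$. The paper then gets $\sup_t\|w\|_{L^2}$ from an energy estimate on the parabolic $v$-equation itself,
\[
\frac{d}{dt}\frac12\int w^2+\tau_1\int\frac{\mu(v)}{v}w_x^2=-\int w_x\frac{\mu(v)}{v}h_x ,
\]
which closes using only $\int_0^T\!\int\frac{\mu(v)}{v}h_x^2\le C$.

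\paragraph{The $L^\infty$ bounds on $v$ are also not established.} You claim that an $L^\infty$ bound on $\phi(v)=-v^{-\alpha}/\alpha$ also bounds $v$ from above. This is false for $\alpha>0$, since $\phi(v)\to0$ as $v\to\infty$; and $\alpha<1/2$ has nothing to do with the form of $\phi$.

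The upper bound needs a separate argument that actually uses the $L^1$ control of $v$ on $\{v\ge N\}$ you recorded. From $\partial_x v^k=\frac{k}{\gamma}v^{k+\alpha}w$ and a point where $v\le N$, one gets $(v^k-N^k)_+\le C\|w\|_{L^2}\big(\int v^{2k+2\alpha}\mathbf{1}_{\{v\ge N\}}\big)^{1/2}$. The linear growth of $Q(v|\vtil)$ controls the right-hand side only if $2k+2\alpha\le1$ for some $k>0$; this is exactly where $\alpha<1/2$ enters.

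For the lower bound, an $L^2$ bound on $\partial_x\phi(v)$ alone gives only H\"older continuity. You need the truncated estimate from a reference point, $(v^{-\alpha}-N^{\alpha})_+\le C\|w\|_{L^2}|\{v^{-1}\ge N\}|^{1/2}$. Your fallback via \eqref{pwp2}--\eqref{pwp3} cannot replace this, since those estimates give only time-integrated bounds, never $\sup_t\|1/v\|_{L^\infty}$.

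Steps 2--3 are a reasonable parabolic bootstrap once $C^{-1}\le v\le C$ and $w\in L^\infty_tL^2$ are known. The nonlinear commutator terms need $\|v_x\|_{L^2_tL^\infty_x}$, which Gagliardo--Nirenberg supplies. But these steps depend entirely on Step 1.
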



The combination of these two propositions allows us to establish the global-in-time existence via a standard continuation argument.


Suppose that a global-in-time solution does not exist and there exists the finite maximal lifetime \(T_0\) for some \(T_0 \in (\That,\infty)\).
Then, at least one of the following must hold:
\[
\sup_{t\in[0,T_0)} \|v-\vtil\|_{H^2(\RR)} = \infty, \qquad
\sup_{t\in[0,T_0)} \|u-\util\|_{H^1(\RR)} = \infty, \qquad
\inf_{(0,T_0)} \inf_\RR v =0.
\]
However, Proposition \ref{prop:apriori} guarantees that
\[
\sup_{t\in[0,T_0)} \|v-\vtil\|_{H^2(\RR)}
+\sup_{t\in[0,T_0)} \|u-\util\|_{H^1(\RR)}
+\sup_{t\in[0,T_0)} \|1/v\|_{L^\infty(\RR)} \le C(T_0),
\]
for some constant \(C(T_0)\) which is independent of \(T<T_0\).
This leads to a contradiction.
Therefore, a global solution is obtained, and its uniqueness follows from a standard energy method; see for example, \cite[Appendix B]{CKV-JMPA20}.
This completes the proof of Theorem \ref{thm:existence}. \qed

\vspace{2mm}
Thus, it remains to prove Proposition \ref{prop:apriori}

\subsection{Proof of Proposition \ref{prop:apriori}}
First of all, we slightly abuse notation and write \(U=(v,h)\), instead of \((v,u)\), throughout the remainder of this section.
Under this notation, by the definitions of \(\Xcal_T\) and \(\Hcal_T\), the local solution \(U=(v,h)\) belongs to the function class \(\Hcal_T\), for any \(T\in(0,T_0)\).

\vspace{2mm}
\(\bullet\) \textbf{Notation:} In what follows, \(C\) denotes a positive constant which may vary from line to line, and depends on the initial data (for instance, \(\|v_0-\vtil\|_{H^2}\), \(\|1/v\|_{L^\infty}\)), the end states \((v_\pm,u_\pm)\) and \(T_0\), but does not depend on \(T\in(0,T_0)\).
It may also depend on \(\e,\l,\e_0\) and \(\d_0\). We also recall the notation \(f^{\pm X}\coloneqq f(t,x\pm X(t))\) and extend it to arbitrary functions \(X\).

\vspace{2mm}
We restate the contraction inequality \eqref{cont_main} in the following way.
Since \(v_0\) and \(\vtil\) are both bounded above and below away from zero, it holds by Remark \ref{rmk:rel} that
\[
\int_\RR \et(U_0(x)|\Util(x))dx \le C \|U_0-\Util\|_{L^2(\RR)}^2
\le C.
\]
Then, thanks to the boundedness of the weight function \(a\), \eqref{cont_main} and \eqref{shift-control} implies
\begin{equation} \label{from_cont}
\begin{aligned}
&\int_\RR \et(U(t,x+X(t))|\Util(x-\s t))dx \\
&\qquad
+\int_0^T \int_\RR \abs{ a'(x-\s t)} Q(v(s,x+X(s))|\vtil(x-\s t))dx ds \\
&\qquad
+\int_0^T \int_\RR v^\b(s,x+X(s)) \big| \rd_x \big(p(v(s,x+X(s)))-p(\vtil(x-\s t))\big) \big|^2 dx ds \\
&\qquad
+\int_0^T \int_\RR v^{-\a-1}(s,x+X(s)) \big|\rd_x \big(h(s,x+X(s))-\htil(x-\s t)\big)\big|^2 dx ds \\
&\le C \int_\RR \et(U_0(x)|\Util(x))dx \le C,
\end{aligned}
\end{equation}
and
\begin{equation} \label{shift-bound}
\begin{aligned}
&|\dot{X}(t)+\s| \le C(f(t)+1), \quad \text{for a.e. } t\in[0,T], \\
&\text{for some positive function } f \text{ satisfying } \norm{f}_{L^1(0,T)} \le C\int_\RR \et(U_0(x)|\Util(x))dx\le C.
\end{aligned}
\end{equation}
Then, using \eqref{shift-bound}, we find that for any \(t\in[0,T]\),
\begin{equation} \label{shift-bound2}
|X(t)| \le C \Big(\int_\RR \et(U_0(x)|\Util(x))dx + 1\Big) (t+1)
\le C(T_0+1) \le C.
\end{equation}

\subsubsection{Uniform bound on the relative entropy}
We now demonstrate that the relative entropy is finite up to any finite time even without any shift functions.
To show this, we use \eqref{from_cont} and \eqref{shift-bound2} to obtain
\begin{align*}
&\int_\RR (h-\htil^{-\s t})^2 dx
\le 2\int_\RR (h(t,x)-\htil^{-X(t)-\s t})^2 dx
+2\int_\RR (\htil^{-X(t)-\s t}-\htil^{-\s t})^2 dx\\
&\qquad=2\int_\RR (h(t,x+X(t))-\htil^{-\s t})^2 dx
+2\int_\RR (\htil^{-X(t)}-\htil)^2 dx \le C.
\end{align*}
Next, we deal with the term \(Q(v|\vtil)\) in the relative entropy.
We first utilize the definition of \(Q(\cdot|\cdot)\) and consider the following decomposition:
\begin{align*}
\int_\RR Q(v|\vtil^{-\s t})dx
&=\int_\RR Q(v|\vtil^{-X(t)-\s t}) dx
+\int_\RR Q(\vtil^{-X(t)-\s t}|\vtil^{-\s t}) dx\\
&\qquad
+\int_\RR (Q'(\vtil^{-\s t})-Q'(\vtil^{-X(t)-\s t}))(v-\vtil^{-X(t)-\s t}) dx.
\end{align*}
We simply apply \eqref{from_cont} for the first term on the right-hand side and use Remark \ref{rmk:rel} and \eqref{shift-bound2} for the second term.
For the last one, since it holds by the same rationale as \eqref{Q-lin} that \(\big|v-\vtil^{-X(t)-\s t}\big| \le C Q(v|\vtil^{-X(t)-\s t})+ C\), we use \eqref{from_cont} and \eqref{shift-bound2} to find that
\begin{align*}
&\int_\RR \abs{Q'(\vtil^{-\s t})-Q'(\vtil^{-X(t)-\s t})}
\big|v-\vtil^{-X(t)-\s t}\big| dx\\
&\qquad\le C\int_\RR Q(v|\vtil^{-X(t)-\s t}) dx
+C \int_\RR \big|\vtil^{-\s t}-\vtil^{-X(t)-\s t}\big| dx
\le C.
\end{align*}
Thus, gathering all, we obtain 
\begin{equation} \label{apriori-rel}
\int_\RR \et(U|\Util^{-\s t})dx
= \int_\RR Q(v|\vtil^{-\s t})dx + \frac{1}{2}\int_\RR (h-\htil^{-\s t})^2 dx \le C.
\end{equation}
This indicates that, even without shift function, the relative entropy functional remains finite for any finite time.

\vspace{2mm}
However, our objective is to establish strong solutions in \(\Xcal_T\).
To this end, it is necessary to control not only the \(L^2\)-norm but also the \(H^2\)-norm of the \(v\) variable and \(H^1\)-norm of the \(h\) variable.
The proof is structured as follows.
First of all, we eliminate the shift dependence in the dissipation terms of \eqref{from_cont}, in the same manner as we did for the relative entropy.
Based on this, we derive a uniform bound for \(v_x\), and then we establish \(L^\infty\) bounds for both \(v\) and \(1/v\).
Then, in view of Remark \ref{rmk:rel}, these bounds, together with the finiteness of the relative entropy functional, imply the finiteness of the \(L^2\)-norm.
Finally, combining all these ingredients, we do the energy methods to obtain the remaining estimates.

\subsubsection{Uniform bounds on \(\|v^{\frac{\b}{2}}p(v)_x\|_{L^2}\) and \(\|v^{-\frac{\a}{2}-\frac{1}{2}}h_x\|_{L^2}\)}
Here, we will use \eqref{from_cont} to show
\begin{equation} \label{apriori-diff}
\int_0^T \int_\RR v^\b(s,x) \big| \rd_x p(v(s,x)) \big|^2
+ \frac{1}{v^{\a+1}(s,x)} \big|\rd_x h(s,x)\big|^2 dx ds \le C.
\end{equation}
To this end, we first note by \eqref{from_cont} that
\begin{align*}
&\int_0^T \int_\RR v^\b(s,x) \big| \rd_x \big(p(v(s,x))-p(\vtil^{-X(t)-\s t})\big) \big|^2 dx ds \\
&\qquad
+\int_0^T \int_\RR \frac{1}{v^{\a+1}(s,x)} \big|\rd_x \big(h(s,x)-\htil^{-X(t)-\s t}\big)\big|^2 dx ds
\le C.
\end{align*}
Thus, it is enough to show that 
\[
\int_0^T \int_\RR v^\b(s,x) \big| \rd_x p(\vtil^{-X(t)-\s t}) \big|^2 dx ds
+\int_0^T \int_\RR \frac{1}{v^{\a+1}(s,x)} \big|\rd_x \htil^{-X(t)-\s t}\big|^2 dx ds
\le C.
\]
For the first term on the left-hand side, since \(v^\b \le C(1+ v) \le C(1+Q(v|\vtil^{-X(t)-\s t}))\),
it follows from \eqref{from_cont} that
\begin{align*}
&\int_0^T \int_\RR v^\b(s,x) \big| \rd_x p(\vtil^{-X(t)-\s t}) \big|^2 dx ds
\le C\int_0^T \int_\RR v^\b(s,x) \big| (\vtil')^{-X(t)-\s t} \big|^2 dx ds\\
&\qquad\qquad
\le C\int_0^T \int_\RR \big(1+Q(v|\vtil^{-X(t)-\s t})\big) \big| (\vtil')^{-X(t)-\s t} \big|^2 dx ds
\le C.
\end{align*}
The second term requires a more delicate analysis.
First, we present the following lemma.
\begin{lemma} \label{lem:refpt}
For any fixed \(t\in[0,T]\), and any \(N > \max \big(2v_\pm,\frac{2}{v_\pm}\big)\), there exist \(y_1,y_2\in\RR\) and a constant \(C>0\) independent of $T$, such that
\begin{equation} \label{refpt}
\big(v-N\big)_+ (t, y_1) = 0, \qquad \big(\frac{1}{v}-N\big)_+ (t, y_2) =0, \qquad
\abs{y_1}+\abs{y_2}\le C.
\end{equation}
\end{lemma}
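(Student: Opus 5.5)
The plan is to argue by contradiction from the uniform relative entropy bound \eqref{apriori-rel}, which holds at the fixed time $t$ with a constant independent of $T$. Fix $t\in[0,T]$ and $N>\max(2v_\pm,2/v_\pm)$. The shock profile satisfies $\min(v_-,v_+)\le\vtil\le\max(v_-,v_+)$, so $\vtil^{-\s t}$ takes values in a fixed compact subset of $\RR^+$. Because $N\ge 2\max(v_\pm)$ and $1/N\le\frac12\min(v_\pm)$, a pointwise lower bound holds wherever $v>N$ or $v<1/N$:
\[
Q(v|\vtil^{-\s t}) \ge c_N>0 .
\]
Here $c_N$ depends only on $N$ and $v_\pm$. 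This follows from the convexity of $Q$ together with \eqref{Phi-sim}, since $Q(v|w)\ge \min\{Q(N|w),Q(1/N|w)\}$, and this minimum is bounded below uniformly for $w$ in the compact range of $\vtil$.

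Next, set $L:=C/c_N+1$, where $C$ is the constant of \eqref{apriori-rel}. Suppose that $(v-N)_+(t,\cdot)>0$ on the whole interval $[-L,L]$. Then
\[
C\ge\int_{-L}^{L}Q(v|\vtil^{-\s t})\,dx\ge 2Lc_N>C,
\]
which is a contradiction. Hence there is $y_1\in[-L,L]$ with $v(t,y_1)\le N$. The same argument applied to $(1/v-N)_+$ gives a point $y_2\in[-L,L]$ with $1/v(t,y_2)\le N$. Both exist because $v(t,\cdot)$ is continuous, as $v-\underline v\in H^1\subset C$, so the set where the positive part vanishes is closed and nonempty. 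This yields $|y_1|+|y_2|\le 2L$.

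The bound $2L$ depends only on $N$, on $v_\pm$, and on the constant in \eqref{apriori-rel}. That constant depends on the initial data and on $T_0$, but not on $T$, as required. The only mildly delicate point is the uniform lower bound $c_N$. It needs $N$ to separate both $v$ and $1/v$ strictly from the range of $\vtil$, and this is exactly why the hypothesis $N>\max(2v_\pm,2/v_\pm)$ is imposed. With that in hand, the remaining steps reduce to the pigeonhole estimate above.
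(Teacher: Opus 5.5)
Your proof is correct and follows essentially the paper's argument. Both combine the $T$-independent bound \eqref{apriori-rel} with a uniform positive lower bound on $Q(v|\vtil^{-\s t})$ on $\{v>N\}$ (resp.\ $\{v<1/N\}$), which confines a zero of the positive part to a bounded interval. Your pigeonhole on $[-L,L]$ is a slightly cleaner version of the paper's ``closest point to the origin'' step, since it needs neither the far-field limits $v\to v_\pm$ nor the existence of a nearest zero.
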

\begin{proof}
Since the proofs for $(v-N)_+$ and $(\frac1v-N)_+$ are essentially the same, and thus we only prove the case of \((v-N)_+\). Since $v\to v_{\pm}$ as $x\to\pm\infty$, the existence of the point $y$ such that $(v-N)_+(t,y)=0$ is guaranteed by the choice of $N$. Assume that \(y\) is the closest point from the origin that satisfies \eqref{refpt}. This immediately implies that \(Q(v|\vtil^{-\s t})\ge Q(2v_+|v_-)>0\) or \(Q(v|\vtil^{-\s t})\ge Q(2v_-|v_+)>0\) on \([-\abs{y},\abs{y}]\).
Without loss of generality, we may suppose that \(Q(v|\vtil^{-\s t})\ge Q(2v_+|v_-)>0\).
Then, from \eqref{apriori-rel}, we have 
\[
C \ge \int_\RR Q(v|\vtil^{-\s t}) dx
\ge \int_{-\abs{y}}^{\abs{y}} Q(2v_+|v_-) dx
\ge 2\abs{y}Q(2v_+|v_-).
\]
This gives a $T$-independent upper bound of $y_1$ as we desired.
\end{proof}

To exploit Lemma \ref{lem:refpt}, we first observe 
\begin{equation} \label{apriori-diff1}
\int_0^T \int_\RR v^{-\a-\g-2} |v_x|^2 dx ds
\le C\int_0^T \int_\RR v^\b |p(v)_x|^2 dx ds \le C.
\end{equation}
Then, for \(N > \max \big(2v_\pm,\frac{2}{v_\pm}\big)\), we use Lemma \ref{lem:refpt} to find that for any \(z \in \RR\),
\begin{equation} \label{vap1}
\begin{aligned}
\big(v^{-\frac{\a+1}{2}}-N^{{\frac{\a+1}{2}}}\big)_+ (t,z)
&\le \big(v^{-\frac{\a+1}{2}}-N^{{\frac{\a+1}{2}}}\big)_+ (t,y_2)
+C\Big|\int_{y_2}^z v^{-\frac{\a+3}{2}} (v_x) \one{v^{-1} \ge N} dx\Big|\\
&\le C\sqrt{\int_\RR v^{-\a-\g-2} (v_x)^2 dx} \sqrt{\int_\RR v^{\g-1}\one{v^{-1} \ge N} dx}.
\end{aligned}
\end{equation}
Since it holds by the same principle as \eqref{Qconst} that \(v^{\g-1}\one{v^{-1} \ge N} \le Q(v|\vtil^{-\s t})\), \eqref{from_cont} yields that for any \(z\in\RR\)
\begin{align*}
\big(v^{-\frac{\a+1}{2}}-N^{{\frac{\a+1}{2}}}\big)_+ (t,z)
\le C\sqrt{\int_\RR v^{-\a-\g-2} (v_x)^2 dx},
\end{align*}
which together with \eqref{apriori-diff1} implies that \(\|v^{-\frac{\a+1}{2}}\|_{L^2(0,T; L^\infty(\RR))}\le C\).
Hence, we have 
\[
\int_0^T \int_\RR \frac{1}{v^{\a+1}(s,x)} \big|\rd_x \htil^{-X(t)-\s t}\big|^2 dx ds
\le C \int_0^T \|v^{-\frac{\a+1}{2}}\|_{L^\infty(\RR)}^2 \int_\RR \big|\rd_x \htil^{-X(t)-\s t}\big| dx ds
\le C.
\]
This establishes \eqref{apriori-diff} as we desired.

\subsubsection{Uniform bounds on \(\|\frac{\m(v)}{v}v_x\|_{L^\infty L^2}\) and \(\|\frac{\m(v)}{v}\big(\frac{\m(v)}{v}v_x\big)_x\|_{L^2L^2}\)}
We now perform the standard energy estimate for the derivative of \(v\) as follows.
Using \eqref{main00}\(_1\), we have
\begin{align*}
&\frac{d}{dt}\int_\RR \frac{1}{2}\Big(\frac{\m(v)}{v}v_x\Big)^2 dx
+\t_1 \int_\RR \frac{\m(v)}{v}\Big(\Big(\frac{\m(v)}{v}v_x\Big)_x\Big)^2 dx
= - \int_\RR \Big(\frac{\m(v)}{v}v_x\Big)_x \m(v)\frac{h_x}{v} dx \\
&\hspace{10mm}
\le \frac{\t_1}{2} \int_\RR \frac{\m(v)}{v}\Big(\Big(\frac{\m(v)}{v}v_x\Big)_x\Big)^2 dx
+ C\int_\RR \frac{\m(v)}{v}(h_x)^2 dx.
\end{align*}
Thanks to \eqref{apriori-diff}, the second term on the right-hand side is in \(L^1(0,T)\), and thus Gr\"onwall's lemma implies that
\begin{equation} \label{apriori-vH1}
\sup_{t\in[0,T]} \int_\RR \Big(\frac{\m(v)}{v}v_x\Big)^2 dx
+\int_0^T \int_\RR \frac{\m(v)}{v}\Big(\Big(\frac{\m(v)}{v}v_x\Big)_x\Big)^2 dx ds \le C.
\end{equation}

\subsubsection{Uniform bounds on \(\|1/v\|_{L^\infty}\) and \(\|v\|_{L^\infty}\)}
Now we establish \(L^\infty(0,T;L^\infty(\RR))\) bounds for \(1/v\) and \(v\).
To handle \(1/v\), as in \eqref{vap1} with the exponent replaced by \(-\a\), we use \eqref{apriori-rel} and \eqref{apriori-vH1} to find that for any \(z\in\RR\)
\begin{align*}
&\big(v^{-\a}-N^{\a}\big)_+ (t,z)
\le \big(v^{-\a}-N^{\a}\big)_+ (t,y_2)
+C\Big|\int_{y_2}^z \frac{v_x}{v^{\a+1}} \one{v^{-1} \ge N} dx\Big|\\
&\le C\sqrt{\int_\RR \Big(\frac{\m(v)}{v}v_x\Big)^2 dx} \sqrt{\int_\RR \one{v^{-1} \ge N} dx}
\le C\sqrt{\int_\RR Q(v|\vtil^{-\s t}) dx}
\le C.
\end{align*}
Note that the constant \(C\) on the right-hand side is independent of \(t\in[0,T]\), and hence this gives an \(L^\infty((0,T)\times\RR)\) bound for \(1/v\).

\vspace{2mm}
For an \(L^\infty((0,T)\times\RR)\) bound for \(v\), we first choose \(k>0\) so that \(2k+2\a\le1\). 
In this part of the argument, the condition \(a<\frac{1}{2}\) becomes crucial.
Then, for any \(z\in\RR\),
\begin{align*}
(v^k-N^k)_+(t,z)
&\le (v^k-N^k)_+(t,y_1)
+C\Big|\int_{y_1}^z v^{k-1}v_x \one{v\ge N} dx\Big|\\
&\le 
C\sqrt{\int_\RR \Big(\frac{\m(v)}{v}v_x\Big)^2 dx} \sqrt{\int_\RR v^{2k+2\a} \one{v \ge N} dx}.
\end{align*}
Then, since there exists a constant \(C>0\) such that \(v^{2k+2\a} \one{v \ge N} \le CQ(v|\vtil^{-\s t})\), we apply \eqref{apriori-rel} and \eqref{apriori-vH1} to establish an \(L^\infty((0,T)\times\RR)\) bound for \(v\).
Thus, in summary, we have
\begin{equation} \label{apriori-Li}
\norm{v}_{L^\infty((0,T)\times\RR)} + \norm{1/v}_{L^\infty((0,T)\times\RR)} \le C.
\end{equation}
Moreover, using \eqref{apriori-rel} with Remark \ref{rmk:rel}, this implies that 
\begin{equation} \label{apriori-LiL2}
\|U-\Util\|_{L^\infty(0,T;L^2(\RR))} \le C.
\end{equation}

\subsubsection{Uniform bounds on the higher order derivatives}
Lastly, we do the energy estimate to obtain control on the higher order derivatives.
To this end, we present the following lemma.
\begin{lemma} \label{lem:refpt1}
For any fixed \(t\in[0,T]\)\, there exist \(y\in\RR\) and a constant \(C>0\) such that
\begin{equation*}
\Big(\m(v)\frac{v_x}{v}-1\Big)_+ (t, y) = 0 \quad \text{and} \quad \abs{y} \le C.
\end{equation*}
\end{lemma}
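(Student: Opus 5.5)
We will argue exactly as in the proof of Lemma \ref{lem:refpt}, i.e.\ by contradiction with an integral bound that is uniform in \(t\). Here the controlling integral is \(\int_\RR (\m(v)v_x/v)^2dx\) from \eqref{apriori-vH1}, instead of the relative entropy bound \eqref{apriori-rel}.

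\textbf{Step 1 (existence of a zero).} Fix \(t\in[0,T]\). Since \(v(t,\cdot)-\vtil^{-\s t}\in H^2(\RR)\), the function \(v_x(t,\cdot)\) lies in \(H^1(\RR)\) and tends to \(0\) at \(\pm\infty\). Together with \eqref{apriori-Li}, which keeps \(\m(v)/v=\g v^{-\a-1}\) bounded, this gives \(\m(v)v_x/v\to0\) as \(|x|\to\infty\). So the set \(\{(\m(v)v_x/v-1)_+=0\}\) is nonempty. We choose \(y\) in it with minimal \(|y|\); this is possible because the set is closed, by continuity of \(v_x\) in \(x\).

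\textbf{Step 2 (bounding \(|y|\)).} By minimality of \(|y|\), on the open interval \((-|y|,|y|)\) we have \(\m(v)v_x/v>1\). Hence \((\m(v)v_x/v)^2>1\) there, and integrating gives
\[
2|y| \le \int_{-|y|}^{|y|}\Big(\frac{\m(v)}{v}v_x\Big)^2dx \le \sup_{t\in[0,T]}\int_\RR \Big(\frac{\m(v)}{v}v_x\Big)^2 dx \le C
\]
by \eqref{apriori-vH1}. The constant \(C\) depends only on the initial data, the end states and \(T_0\), not on \(T\), so \(|y|\le C/2\).

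The only point needing care is Step 1, namely the decay of \(\m(v)v_x/v\) at infinity. This needs the two-sided \(L^\infty\) bounds \eqref{apriori-Li}, which are already established at this stage, and the \(H^2\) regularity of the local solution for each fixed \(t<T_0\). Once the zero exists, Step 2 is immediate. If one later also needs the analogous point for \((\m(v)v_x/v+1)_-\), the same argument applies verbatim.
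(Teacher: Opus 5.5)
Your proposal is correct and matches the argument the paper intends (it omits the proof as ``analogous to Lemma \ref{lem:refpt}''): you take the zero of \((\m(v)v_x/v-1)_+\) closest to the origin and use the \(T\)-uniform bound \eqref{apriori-vH1} in place of \eqref{apriori-rel} to get \(2|y|\le C\). Step 1 could be shortened, since \(\m(v)v_x/v\in L^2(\RR)\) already forces \(\{\m(v)v_x/v\le1\}\) to be nonempty, but your decay argument is also valid.
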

\begin{proof}
Given \eqref{apriori-vH1}, the proof is analogous to that of Lemma \ref{lem:refpt}.
Thus, we omit it.
\end{proof}

To proceed further, using \eqref{apriori-Li} and \eqref{apriori-vH1}, we observe that for any \(z\in\RR\),
\begin{align*}
&\Big(\m(v)\frac{v_x}{v}-1\Big)_+ (t, z)
=\Big(\m(v)\frac{v_x}{v}-1\Big)_+ (t, y)
+\int_y^z \Big(\m(v)\frac{v_x}{v}\Big)_x \one{\m(v)\frac{v_x}{v}\ge1} dx\\
&\le C\sqrt{\int_\RR \frac{\m(v)}{v}\Big(\Big(\frac{\m(v)}{v}v_x\Big)_x\Big)^2 dx}
\sqrt{\int_\RR \one{\m(v)\frac{v_x}{v}\ge1} dx}\\
&\le C\sqrt{\int_\RR \frac{\m(v)}{v}\Big(\Big(\frac{\m(v)}{v}v_x\Big)_x\Big)^2 dx}
\sqrt{\int_\RR \Big(\m(v)\frac{v_x}{v}\Big)^2 dx}
\le C\sqrt{\int_\RR \frac{\m(v)}{v}\Big(\Big(\frac{\m(v)}{v}v_x\Big)_x\Big)^2 dx}.
\end{align*}
This together with \eqref{apriori-vH1} and \eqref{apriori-Li} implies
\begin{equation} \label{vx-L2Li}
\|v_x\|_{L^2(0,T; L^\infty(\RR))}\le C.
\end{equation}

To obtain an estimate on \(h_x\), we use \eqref{main00}\(_2\) to get
\[
\frac{d}{dt}\frac{1}{2}\int_\RR (h_x)^2 dx
+ \t_2\int_\RR \frac{\m(v)}{v}(h_{xx})^2 dx
=\int_\RR p(v)_x h_{xx}dx
-\t_2\int_\RR \Big(\frac{\m(v)}{v}\Big)_x h_x h_{xx}dx.
\]
Then, for the first term on the right-hand side, using \eqref{apriori-vH1} and \eqref{apriori-Li}, we find that
\[
\int_\RR p(v)_x h_{xx}dx
\le \frac{\t_2}{4}\int_\RR \frac{\m(v)}{v}(h_{xx})^2 dx
+C.
\]
Moreover, for the second term, we apply \eqref{apriori-Li} to obtain
\begin{align*}
\abs{-\t_2\int_\RR \Big(\frac{\m(v)}{v}\Big)_x h_x h_{xx}dx}
&\le \frac{\t_2}{4}\int_\RR \frac{\m(v)}{v}(h_{xx})^2 dx
+C\int_\RR (v_x)^2 (h_x)^2 dx\\
&\le \frac{\t_2}{4}\int_\RR \frac{\m(v)}{v}(h_{xx})^2 dx
+C \|v_x\|_{L^\infty(\RR)}^2 \int_\RR (h_x)^2 dx.
\end{align*}
Thus, we have
\begin{align*}
\frac{d}{dt}\frac{1}{2}\int_\RR (h_x)^2 dx
+ \frac{\t_2}{2}\int_\RR \frac{\m(v)}{v}(h_{xx})^2 dx
\le C \|v_x\|_{L^\infty(\RR)}^2 \int_\RR (h_x)^2 dx+C.
\end{align*}
Thanks to \eqref{apriori-Li}, \eqref{vx-L2Li} and Gr\"onwall's lemma, we finally obtain
\begin{equation} \label{apriori-hH1}
\sup_{t\in[0,T]} \int_\RR (h_x)^2 dx
+\int_0^T \int_\RR (h_{xx})^2 dx ds \le C.
\end{equation}

\vspace{2mm}
We are now ready to improve the regularity of \(v\)-variable.
To this end, we do the energy estimate for \(v_{xx}\) as follows: using \eqref{main00}, it follows that
\begin{align*}
&\frac{d}{dt}\frac{1}{2}\int_\RR (v_{xx})^2 dx
+\t_1\int_\RR \frac{\m(v)}{v}(v_{xxx})^2 dx\\
&=-\int_\RR h_{xx} v_{xxx} dx
-\t_1\int_\RR v_x v_{xxx} \Big(\frac{\m(v)}{v}\Big)_{xx} dx
-2\t_1\int_\RR \Big(\frac{\m(v)}{v}\Big)_x v_{xx} v_{xxx} dx.
\end{align*}
We analyze the right-hand side on a term-by-term basis.
For the first term, we utilize \eqref{apriori-Li} to find that
\[
-\int_\RR h_{xx} v_{xxx} dx 
\le \frac{\t_1}{4}\int_\RR \frac{\m(v)}{v}(v_{xxx})^2 dx
+ C\int_\RR (h_{xx})^2 dx.
\]
For the second term, using \eqref{apriori-Li}, we observe
\begin{align*}
-\int_\RR v_x v_{xxx} \Big(\frac{\m(v)}{v}\Big)_{xx} dx
&\le C \int_\RR \abs{v_x v_{xx} v_{xxx}} dx 
+ C \int_\RR \abs{(v_x)^3 v_{xxx}} dx \\
&\le \frac{\t_1}{4}\int_\RR \frac{\m(v)}{v}(v_{xxx})^2 dx
+ C\|v_x\|_{L^\infty(\RR)}^2 \int_\RR (v_x)^2 + (v_{xx})^2 dx.
\end{align*}
For the last term, similar to the second one, it holds that
\[
-2\int_\RR \Big(\frac{\m(v)}{v}\Big)_x v_{xx} v_{xxx} dx
\le \frac{\t_1}{4}\int_\RR \frac{\m(v)}{v}(v_{xxx})^2 dx
+ C\|v_x\|_{L^\infty(\RR)}^2 \int_\RR (v_{xx})^2 dx.
\]
Thus, gathering all, we have 
\[
\frac{d}{dt}\frac{1}{2}\int_\RR (v_{xx})^2 dx
+\frac{\t_1}{4}\int_\RR \frac{\m(v)}{v}(v_{xxx})^2 dx\\
\le C\int_\RR (h_{xx})^2 dx
+C\|v_x\|_{L^\infty(\RR)}^2 \int_\RR (v_x)^2 + (v_{xx})^2 dx.
\]
Then, we apply Gr\"onwall's lemma with \eqref{apriori-vH1}, \eqref{apriori-Li} and \eqref{vx-L2Li} and obtain
\begin{equation} \label{apriori-vH2}
\sup_{t\in[0,T]} \int_\RR (v_{xx})^2 dx
+\int_0^T \int_\RR (v_{xxx})^2 dx ds \le C.
\end{equation}

\vspace{2mm}
Since our analysis is carried out in the \((v,h)\)-variables rather than in \((v,u)\), we need to get an additional regularity to obtain the desired regularity in \((v,u)\)-variables.
Given \eqref{apriori-hH1}, since \(u-h=\tau_1\frac{\m(v)}{v}v_x\), we need to show that \(\big(\frac{\m(v)}{v}v_x\big)_x \in L^\infty(0,T;L^2(\RR))\).
For this purpose, we perform the energy estimate with the aid of \eqref{apriori-Li} as follows: using \eqref{main00}, it holds that
\begin{align*}
&\frac{d}{dt}\frac{1}{2}\int_\RR \Big(\Big(\frac{\m(v)}{v}v_x\Big)_x\Big)^2 dx
+\t_1\int_\RR \frac{\m(v)}{v} \Big(\Big(\frac{\m(v)}{v}v_x\Big)_{xx}\Big)^2 dx
=-\int_\RR \Big(\frac{\m(v)}{v}v_x\Big)_{xx} \Big(\frac{\m(v)}{v}h_x\Big)_x dx\\
&\qquad \le 
\frac{\t_1}{2}\int_\RR \frac{\m(v)}{v} \Big(\Big(\frac{\m(v)}{v}v_x\Big)_{xx}\Big)^2 dx
+C\int_\RR (v_x)^2 (h_x)^2 dx
+C\int_\RR (h_{xx})^2 dx.
\end{align*}
This together with \eqref{vx-L2Li}, \eqref{apriori-hH1} and \eqref{apriori-Li} yields that
\begin{equation} \label{apriori-vH2-2}
\sup_{t\in[0,T]} \int_\RR \Big(\Big(\frac{\m(v)}{v}v_x\Big)_x\Big)^2 dx
+\int_0^T \int_\RR \Big(\Big(\frac{\m(v)}{v}v_x\Big)_{xx}\Big)^2 dx ds \le C.
\end{equation}
Thus, summing up \eqref{apriori-vH1}--\eqref{apriori-LiL2}, \eqref{apriori-hH1}--\eqref{apriori-vH2-2} and
\[
\|\rd_x \vtil^{-\s t}\|_{L^\infty(0,T;H^1(\RR))}
+\|\rd_x \util^{-\s t}\|_{L^\infty(0,T;L^1(\RR))} \le C,
\]
we complete the proof of Proposition \ref{prop:apriori}. \qed

\section*{Statements and Declarations}

\noindent\textbf{Declaration of competing interest.}
The authors declared that they have no conflict of interest to this work.

\vspace{2mm}
\noindent\textbf{Data availability statement.}
We do not analyze or generate any datasets, because our work proceeds within a theoretical and mathematical approach. 




\bibliography{reference}

\end{document}